\newtheorem{thm}{Theorem}[subsection]
\newtheorem{lem}[thm]{Lemma}
\newtheorem{prop}[thm]{Proposition}
\newcommand{\Z}{\mathbb{Z}}
\newcommand{\N}{\mathbb{N}}
\newcommand{\Q}{\mathbb{Q}}
\newcommand{\U}{\mathbf{U}}
\newcommand{\V}{\mathbf{V}}
\newcommand{\T}{\mathbf{T}}
\newcommand{\s}{\mathbf{S}}
\newcommand{\End}{\operatorname{End}}
\newcommand{\fg}{\mathfrak{g}}
\newcommand{\fh}{\mathfrak{h}}
\newcommand{\0}{\bar 0}
\newcommand{\1}{\bar 1}
\newcommand{\Lmn}{\ensuremath{\Lambda(m|n)}}
\newcommand{\Lmnd}{\ensuremath{\Lambda(m|n,d)}}
\newcommand{\Pmn}{\ensuremath{P(m|n)}}
\newcommand{\ad}{\ensuremath{\operatorname{ad}}}
\newcommand{\Uq}{\ensuremath{\mathbf{U}}}
\newcommand{\Sqmnd}{\ensuremath{S_q(m|n,d)}}
\newcommand{\Vq}{\ensuremath{\mathbf{V}}}
\newcommand{\Sq}{\ensuremath{\mathbf{S}}}
\newcommand{\Tq}{\ensuremath{\mathbf{T}}}
\newcommand{\Aq}{\ensuremath{\mathbf{A}}}
\newcommand{\Iq}{\ensuremath{\mathbf{I}}}
\newcommand{\Yq}{\ensuremath{\mathbf{Y}}}
\newcommand{\Hq}{\ensuremath{\mathbf{H}_{q}}}
\newcommand{\Ures}{\ensuremath{\Uq_{\mathcal{A}}}}
\newcommand{\Ares}{\ensuremath{\Aq_{\mathcal{A}}}}
\newcommand{\Tres}{\ensuremath{\Tq_{\mathcal{A}}}}
\newcommand{\AAA}{\ensuremath{\mathcal{A}}}
\numberwithin{equation}{subsection}
\begin{document}

\title{Presenting Schur Superalgebras}

\author{Houssein El Turkey}
\address{Department of Mathematics \\
          University of Oklahoma \\
          Norman, OK 73019}
\email{houssein@ou.edu}
\author{Jonathan R. Kujawa}
\address{Department of Mathematics \\
          University of Oklahoma \\
          Norman, OK 73019}
\thanks{Research of the second author was partially supported by NSF grant
DMS-0734226 and NSA grant H98230-11-1-0127}\
\email{kujawa@math.ou.edu}
\date{\today}
\subjclass[2000]{Primary 16S99, 20G05}
\begin{abstract} We provide a presentation of the Schur superalgebra and its quantum analogue which generalizes the work of Doty and Giaquinto for Schur algebras.  Our results include a basis for these algebras and a presentation using weight idempotents in the spirit of Lusztig's modified quantum groups.
\end{abstract}

\maketitle

\section{Introduction}

\subsection{The Schur algebra}  The Schur algebra plays a central role in the representation theory of $\operatorname{GL}(n)$ (e.g.\ see \cite{DDPW}).   It is also the prototypical example of a quasihereditary algebra (cf.\  \cite{CPS}).  And, of course, it is at center stage in Schur-Weyl duality.  If $V$ denotes a $n$-dimensional vector space and $V^{\otimes d}$ denotes the $d$-fold tensor product of $V$ with itself (all vector spaces and tensor products are over the rational numbers), then there is action of the symmetric group on $d$ letters, $\Sigma_{d}$, on $V^{\otimes d}$ by permuting the tensor factors.  With this notation we can define the Schur algebra by
\[
S(n,d)=\End_{\Sigma_{d}}\left(V^{\otimes d} \right).
\]  On the other hand the enveloping algebra of the Lie algebra $\mathfrak{gl}(n)$, $U(\mathfrak{gl}(n))$, has a natural action on $V$ and, hence, on $V^{\otimes d}$.  We could instead define $S(n,d)$ as the image of the resulting representation $U(\mathfrak{gl}(n)) \to \End_{\Q}\left(V^{\otimes d} \right)$.  Schur-Weyl duality implies these two definitions coincide.  Thus the Schur algebra acts as a bridge between representations of $\mathfrak{gl}(n)$ and the symmetric group.  The above story generalizes to the quantum setting if we replace the rational numbers with the rational functions in the indeterminate $q$, the symmetric group by its Iwahori-Hecke algebra, and the enveloping algebra by the quantum group associated to $\mathfrak{gl}(n)$.  The resulting algebra is called the $q$-Schur algebra.

Because of fundamental importance of the Schur and $q$-Schur algebras it is desirable to study them from as many perspectives as possible.  Building on work of Green \cite{Green},  Doty and Giaquinto provided a presentation of the Schur algebras by generators and relations \cite{DG}. Since the enveloping algebra surjects onto the Schur algebra, the known generators and relations for $U(\mathfrak{gl}(n))$ yield generators and relations for the Schur algebra.   But as $U(\mathfrak{gl}(n))$ is infinite dimensional and $S(n,d)$ is finite dimensional, there must be additional relations.  Remarkably,  Doty and Giaquinto  prove that only two more, easy to state, relations are required.  As an outcome of their calculations they obtain a basis and a presentation via weight idempotents reminiscent of Lusztig's modified quantum group, $\dot{\Uq}$.  They also prove quantum analogues of all these results.

One notable application of the Doty-Giaquinto presentation is Li's recent geometric realization of Schur algebras as a certain ring of constructible functions on generalized Steinberg varieties \cite{Li}.  We also see that their presentation of the $q$-Schur algebra is closely related to the geometric construction of the $q$-Schur algebras and quantum group $\Uq_{q}(\mathfrak{gl}(n))$ given by Beĭlinson,  Lusztig, and MacPherson \cite{BLM} (cf.\  \cite[Part 5]{DDPW}).

\subsection{The Schur superalgebra}  There is a $\Z_{2}$-graded (i.e.\ ``super'') analogue of the above setup.  Namely, now let $V=V_{\0} \oplus V_{\1}$ denote a $\Z_{2}$-graded vector space with the dimension of $V_{\0}$ equal to $m$ and the dimension of $V_{\1}$ equal to $n$.  We define $V^{\otimes d}$ as the $d$-fold tensor product of $V$ with itself.  The symmetric group $\Sigma_{d}$ acts on $V^{\otimes d}$ by signed permutation of the tensor factors.  The Schur superalgebra is then defined to be
\[
S(m|n,d) = \End_{\Sigma_{d}}\left(V^{\otimes d} \right).
\]  On the other hand the enveloping superalgebra of the Lie superalgebra $\mathfrak{gl}(m|n)$, $U(\mathfrak{gl}(m|n))$, has a natural action on $V$ and, hence, on $V^{\otimes d}$.  We could instead define $S(m|n,d)$ as the image of the resulting representation $U(\mathfrak{gl}(m|n)) \to \End_{\Q}\left(V^{\otimes d} \right)$.  The super version of Schur-Weyl duality implies these two definitions coincide \cite{BR, Sergeev}.  Thus the Schur superalgebra acts as a bridge between representations of $\mathfrak{gl}(m|n)$ and the symmetric group.  In positive characteristic this connection can be used to prove the Mullineux Conjecture \cite{BK}.

There is also a quantum version of this story.   We again replace the rational numbers with the rational functions in the indeterminate $q$ and the symmetric group by its Iwahori-Hecke algebra, and now replace the enveloping algebra by the quantum group associated to $\mathfrak{gl}(m|n)$. Schur-Weyl duality in this setting was established by Moon \cite{Moon} and Mitsuhashi \cite{M}.  The resulting algebra is called the $q$-Schur superalgebra.  Recently Du and Rui studied the representation theory and combinatorics of the $q$-Schur superalgebras  \cite{DR}.

\subsection{Results}  In this paper we generalize the results of Doty-Giaquinto to the Schur and $q$-Schur superalgebras.  It should be noted that after obtaining the appropriate analogues of the ingredients used in \cite{DG}, the final results are proved using the same arguments as in the non-super case.  The main challenge is to correctly formulate and prove these analogues.

In Theorem~\ref{T:t1} we obtain a presentation for the Schur superalgebra from the standard presentation of the enveloping algebra for $\mathfrak{gl}(m|n)$.  We prove we only need to add two additional relations just as in the case of the Schur algebra.  We then give an explicit basis for the Schur superalgebra and its integral form in Theorem~\ref{T:t1b}.  Finally, in Theorem~\ref{T:t1c} we prove that the Schur superalgebra admits a presentation using weight idempotents in a form reminiscent of Lusztig's modified quantum group.

We also prove the analogous results in the quantum setting.  We use the quantum group $\Uq = U_{q}(\mathfrak{gl}(m|n))$ as presented by Zhang \cite{Z} and prove in Theorem~\ref{T:t2} that we need to add only two additional relations to the standard presentation of $\Uq$ to obtain the $q$-Schur superalgebra.  We also provide a basis for the $q$-Schur superalgebra and an $\AAA = \Z[q,q^{-1}]$-form in Theorem~\ref{T:integralquantum}.  Finally, in Theorem~\ref{T:quantumidempotent}  we prove that the $q$-Schur superalgebra admits a presentation via weight idempotents which is reminiscent of Lusztig's modified quantum group for $\mathfrak{gl}(n)$.

\subsection{Future directions} The results of this paper open the door to a number of interesting avenues of research.  Sergeev \cite{Sergeev} and Olshanski \cite{Ol}, in the nonquantum and quantum cases, respectively, give a Schur-Weyl duality for the type Q Lie superalgebras.  It would be interesting to obtain a presentation for the corresponding type Q Schur superalgebras.   In a different direction, our presentation of the Schur and $q$-Schur superalgebras \`a la Doty-Giaquinto suggests the possibility of geometric constructions for $\mathfrak{gl}(m|n)$ in the spirit of \cite{BLM, Li}.  In a third direction, in proving the quantum case we obtain the commutator formulas for the divided powers of root vectors and establish the existence of an $\AAA = \Z[q,q^{-1}]$-form for the quantum group $\Uq_{q}(\mathfrak{gl}(m|n))$.  Although perhaps not surprising to experts, to our knowledge this has not appeared elsewhere in the literature.  The existence of such a form allows one to consider representations at a root of unity and a super analogue of Lusztig's small quantum group as in \cite{Lusztig}.  Finally, the existence of a presentation of the $q$-Schur superalgebra using weight idempotents suggests that Lusztig's modified quantum groups should have a super analogue.  Lusztig's modified quantum group is a key ingredient to the categorification of the quantum group associated to $\mathfrak{sl}(n)$ (for example, as explained in \cite{Lauda}). Also see \cite{MSV} and references therein for a discussion of categorifications of the $q$-Schur algebras.  The categorification of quantum supergroups is currently an open problem and a super analogue of Lusztig's modified quantum group may be useful.

\subsection{Acknowledgments}  The authors would like to thank the anonymous referee for suggesting improvements to the exposition. 

\section{Nonquantum Case}
In this section all vector spaces will be over the rational numbers, $\Q$.
\subsection{The Lie superalgebra $\mathfrak{gl}(m|n)$}\label{SS:glmndef}   Given a $\Z_{2}$-graded vector space $V=V_{\0}\oplus V_{\1}$ we write $\overline{v} \in \Z_{2}$ for the degree of a homogeneous element $v \in V$.  For short we call $v$ \emph{even} (resp.\ \emph{odd}) if $\overline{v}=\0$ (resp.\ $\overline{v}=\1$).  Let us also introduce the following convenient notation.  For fixed nonnegative integers $m$ and $n$ and $1\leq i \leq m+n$ we define
\begin{equation}\label{E:parity}
\overline{i}=\begin{cases}
            \0,& \text{if $i\leq m$};\\
            \1,& \text{if $i\geq m+1$.}
            \end{cases}
\end{equation}

Let $\fg=\fg_{\0}\oplus \fg_{\1}$ denote the Lie superalgebra $\mathfrak{gl}(m|n)$.   As a vector space $\fg$ is the set of $m+n$ by $m+n$ matrices.  For $1\leq i,j \leq m+n$ we set $E_{i,j}$ to be the matrix unit with a $1$ in $i$th row and $j$th column.  Then the set of matrix units forms a homogeneous basis for $\fg$.  The $\Z_{2}$-grading on $\fg$ is defined by setting
$\fg_{\0}$ to be the span of $E_{i,j}$ where $1\leq i,j  \leq m$ or $m+1 \leq i,j \leq m+n$ and $\fg_{\1}$ to be the span of the $E_{i,j}$ such that $m+1\leq i \leq m+n$ and $1\leq j \leq n$ or $1\leq i \leq m$ and $m+1 \leq j \leq m+n$.  That is, the degree of $E_{i,j}$ is $\overline{i}+\overline{j}$.

The Lie bracket on $\fg$ is given by the supercommutator,
\begin{equation}\label{E:bracketdef}
[E_{ij},E_{kl}]=\delta_{jk}E_{il}-(-1)^{\overline{E}_{ij}\overline{E}_{kl}}\delta_{il}E_{kj}.
\end{equation}  By definition it is bilinear and so it suffices to define it on the basis of matrix units.

 We fix $\fh$ to be the Cartan subalgebra of $\fg$ consisting of all diagonal matrices and let $\fh^*$ be its dual. Let $\varepsilon_i:\fh\rightarrow \Q$ be the linear map that takes an element of $\fh$ to its $i$th diagonal entry. The set $\{\varepsilon_i \mid  1\leq i\leq m+n\}$ forms a basis of $\fh^*$ and we define a bilinear form, $(\,,\,)$, on $\fh^*$ by setting
\begin{equation}\label{E:bilinearform}
(\varepsilon_i,\varepsilon_j)=(-1)^{\overline{i}}\delta_{ij}.
\end{equation}
With our choice of Cartan subalgebra the root system of $\fg$ is
\[
\Phi=\{\varepsilon_i-\varepsilon_j\mid 1\leq i\neq j\leq m+n\}
\] and the matrix unit $E_{i,j}$ spans the $\varepsilon_{i}-\varepsilon_{j}$ root space.  In particular there is a natural $\Z_{2}$-grading on $\Phi$ given by declaring that the root $\varepsilon_{i}-\varepsilon_{j}$ has degree $\overline{E}_{i,j} = \overline{i}+\overline{j}$. We fix the Borel subalgebra of $\fg$ given by taking all upper triangular matrices.  Corresponding to this choice of Borel the positive roots are
\[
\Phi^+=\{\varepsilon_i-\varepsilon_j\mid 1\leq i<j \leq m+n\}
\] and if we set $\alpha_i=\varepsilon_i-\varepsilon_{i+1}$, then $\{\alpha_1,\ldots,\alpha_{m+n-1}\}$ are the simple roots.  The simple roots have degree
  \[\overline{\alpha}_{i}=\begin{cases}
            \0,& \text{if $i\neq m$};\\
            \1,& \text{if $i=m$.}
            \end{cases}\]

\subsection{The Schur superalgebra}\label{SS:schursuperalgebra}

A $\fg$-(super)module is a $\Z_{2}$-graded vector space $M=M_{\0}\oplus M_{\1}$ which admits an action by $\fg$.  The action respects the $\Z_{2}$-grading in that for any $r,s \in \Z_{2}$, if $x \in \fg_{r}$ and $m \in M_{s}$, then $x.m \in M_{r+s}$.  The action also respects the Lie bracket in that for any homogeneous $x,y \in \fg$ and $m\in M$, we have
\[
[x,y].m= x.(y.m) - (-1)^{\overline{x} \cdot \overline{y}}y.(x.m).
\]
  Note that here and elsewhere we give the condition only on homogeneous elements. The general case is obtained by linearity.  As all modules will be $\Z_{2}$-graded, we choose to omit the prefix ``super''.

The natural $\fg$-module, $V$, is the vector space of column vectors of height $m+n$.  For $1 \leq i \leq m+n$, let $v_{i}$ denote the element of $V$ with a $1$ in the $i$th row and zeros elsewhere.  Then the set $\{v_{i} \mid 1 \leq i\leq m+n \}$ defines a homogeneous basis for $V$ with $\overline{v}_{i}=\overline{i}$ for $i=1, \dotsc , m+n$.  The action of $\fg$ on $V$ is given by left multiplication.

We denote universal enveloping superalgebra of $\fg$ by $U$. It inherits a $\Z_{2}$-grading from $\fg$ and natural basis given by the PBW theorem for Lie superalgebras \cite[Section 1.1.3]{K}.    As for Lie algebras, a $\fg$-module can naturally be thought of as a $U$-module and vice versa.  In particular, $U$ admits a coproduct and so if $M$ and $N$ are $\fg$-modules, then $M \otimes N$ is again a $\fg$-module.

As it will be important in the calculations which follow, let us make this explicit.  The coproduct $U \to U \otimes U$ is given on elements of $\fg$ by $x \mapsto x \otimes 1 + 1 \otimes x$.  We use the convention that in any formula in which two homogenous elements have their order reversed, a sign is introduced which is $-1$ raised to the product of their degrees.  Given a homogeneous element $x\in \fg$ and homogeneous $m\in M$ and $n \in N$, then the coproduct along with the sign convention implies that we have
\[
x.(m\otimes n) = (x.m)\otimes n + (-1)^{\overline{x}\cdot \overline{ m}}m \otimes(x.n).
\]

In particular, for $d \geq 1$ we may define the $d$-fold tensor product of the natural module,
\[
V^{\otimes d} := V \otimes V \otimes \dotsb \otimes V.
\]
 Let
\[
\rho_d:U \to \End_{\Q} \left( V^{\otimes d }\right)
\] denotes the corresponding superalgebra homomorphism.  We define the \emph{Schur superalgebra} $S(m|n,d)$ to be the image of $\rho_d$.  In particular, we can and will think of $S(m|n,d)$ as a quotient of $U$.

Note that the Schur superalgebra can also be defined as follows.  There is a signed permutation action of the symmetric group on $d$ letters, $\Sigma_{d}$, on $V^{\otimes d}$.  The super analogue of Schur-Weyl duality \cite{BR, Sergeev} then shows that
\[
S(m|n,d) = \End_{\Sigma_{d}}\left(V^{\otimes d} \right).
\]
\subsection{A presentation of the Schur superalgebra}

 Our first main result gives the Schur superalgebra  by generators and relations.  Here and throughout, if $A$ is an associative superalgebra and  $x,y \in A$ are homogeneous elements, then we write
\[
[x,y] = xy - (-1)^{\overline{x}\cdot \overline{y}}yx.
\]  For an element $x \in A$ the map $\ad x: A \to A$ is defined by $\ad x (y) = [x,y]$.  Note that the bilinear form used in the following relations is the one introduced in \eqref{E:bilinearform}.
 \begin{thm}\label{T:t1} The Schur superalgebra $S(m|n,d)$ is generated by homogeneous elements
\[
e_{1}, \dotsc , e_{m+n-1}, f_{1}, \dotsc , f_{m+n-1}, H_{1}, \dotsc , H_{m+n}
\]
where the $\Z_{2}$-grading is given by setting  $\overline{e}_{m}=\overline{f}_{m}=\1$,  $\overline{e}_{i}= \overline{f}_{i}=\0$ for $i \neq m$, and $\overline{H}_{i}=\0$.

  The following is a complete set of relations:
 \begin{enumerate}
 \item [(R1)]$[H_i,H_j]=0, \text{ where }1\leq i,\,j\leq m+n$;\\
\item [(R2)]$[e_i,f_j]=\delta_{ij}\left(H_i-(-1)^{\overline{e}_i \cdot \overline{f}_j}H_{j+1} \right), \quad 1\leq i,\,j\leq m+n-1$;\\
\item [(R3)] $[H_i,e_j]=(-1)^{\overline{i}}(\varepsilon_i,\alpha_j)e_j,\quad\text{and}\quad [H_i,f_j]=-(-1)^{\overline{i}}(\varepsilon_i,\alpha_j)f_j,\quad \\  \text{where }1\leq i\leq m+n,\quad 1\leq j\leq m+n-1$;\\
\item [(R4)] $[e_m,e_m]=0,\quad (\ad e_i)^{1+|(\alpha_i,\alpha_j)|}e_j=0,\quad\text{if $1\leq i\neq j\leq m+n-1$ and $i\neq m$}$\\$[e_m,[e_{m-1},[e_m,e_{m+1}]]]=0,\quad\text{if $m, n\geq2$}$; \\
 \item [(R5)]$[f_m,f_m]=0,\quad (\ad f_i)^{1+|(\alpha_i,\alpha_j)|}f_j=0,\quad\text{if $1\leq i\neq j\leq m+n-1$ and $i\neq m$}$\\
 $[f_m,[f_{m-1},[f_m,f_{m+1}]]]=0\quad\text{if $m,n \geq2$}$; \\
 \item [(R6)]$H_1+H_2+\cdots+H_{m+n}=d$; \\
\item [(R7)] $H_i(H_i-1)\cdots (H_i-d)=0,$ where $1\leq i\leq m+n$.
 \end{enumerate}
 \end{thm}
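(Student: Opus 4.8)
The plan is to realize the algebra $T$ presented by the relations (R1)--(R7) as $S(m|n,d)$ by producing a surjection from the enveloping superalgebra and then matching dimensions. First I would observe that, under the identification $e_i = \rho_d(E_{i,i+1})$, $f_i = \rho_d(E_{i+1,i})$, $H_i = \rho_d(E_{ii})$, the relations (R1)--(R5) are exactly the Chevalley--Serre type defining relations of $U = U(\fg)$: the sign in (R2) and the coefficient in (R3) are forced by the supercommutator \eqref{E:bracketdef} and the form \eqref{E:bilinearform}, while $[e_m,e_m]=0$ and the degenerate relation $[e_m,[e_{m-1},[e_m,e_{m+1}]]]=0$ encode the isotropic simple root $\alpha_m$. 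Next I would verify (R6) and (R7) directly on $V^{\otimes d}$: since $\sum_i E_{ii}$ acts as the identity on $V$ and the $H_i$ are primitive, the coproduct gives $\rho_d(\sum_i H_i) = d\cdot\mathrm{id}$, which is (R6); and each $H_i$ acts diagonally on the standard monomial basis of $V^{\otimes d}$ with eigenvalue the number of tensor factors equal to $v_i$, an integer in $\{0,1,\dots,d\}$, which is (R7). Since $e_i,f_i,H_i$ are images of generators of $U$ and $S(m|n,d) = \rho_d(U)$, letting $T$ denote the algebra defined by (R1)--(R7) these checks yield a surjective homomorphism $\phi\colon T \twoheadrightarrow S(m|n,d)$.

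It then remains to prove $\phi$ is injective, and as both algebras are finite dimensional it suffices to exhibit a spanning set of $T$ of cardinality at most $\dim S(m|n,d)$. For this I would use the triangular decomposition afforded by the PBW theorem for $\fg$ together with (R1), (R6), (R7). By (R1) the subalgebra $T^0$ generated by $H_1,\dots,H_{m+n}$ is commutative, and by (R6)--(R7) it is spanned by orthogonal idempotents $\{1_\lambda\}$ indexed by the weight set $\Lmnd$ of compositions of $d$ into $m+n$ nonnegative parts, with $\sum_\lambda 1_\lambda = 1$. Defining the non-simple root vectors inside $T$ by iterated brackets of the $e_i$ and $f_i$, and using (R2)--(R3) to move Cartan elements to normal order (each passage shifting the weight $\lambda$ by the relevant root), I would show $T$ is spanned by elements $F\,1_\lambda\,E$ with $E,F$ PBW-ordered monomials in the positive and negative root vectors, each odd root vector occurring to a power at most one since its square is forced to vanish as for $e_m$.

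The weight bookkeeping then makes this set finite: the relation (R3) forces $F\,1_\lambda\,E$ to be nonzero only when all the intermediate weights obtained by shifting $\lambda$ along $E$ and $F$ remain in $\Lmnd$, so the exponents of the root vectors are bounded. This cuts the spanning set down to a finite indexing set, and $T = \bigoplus_{\mu,\nu}1_\mu T 1_\nu$ with each surviving monomial lying in a single block.

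The hard part will be the final count, i.e.\ showing this finite spanning set is actually a basis, equivalently that $\phi$ carries it to a linearly independent set in $S(m|n,d)$. I expect this to be the crux for two reasons: (a) the super PBW straightening used to produce the spanning set must correctly track the signs coming from the supercommutator and the nilpotency of the odd root vectors; and (b) one must set up a bijection between the surviving monomials $F\,1_\lambda\,E$ and the standard matrix basis of $S(m|n,d) = \End_{\Sigma_d}(V^{\otimes d})$ provided by super Schur--Weyl duality \cite{BR, Sergeev}, in which the entries in odd positions are constrained to $\{0,1\}$ (matching the squares of odd root vectors vanishing). Granting this matching, surjectivity of $\phi$ forces the spanning set to have cardinality exactly $\dim S(m|n,d)$, so $\phi$ is an isomorphism and (R1)--(R7) is a complete set of relations; the same computation simultaneously produces the explicit basis recorded in Theorem~\ref{T:t1b}.
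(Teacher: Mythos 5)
Your proposal follows essentially the same route as the paper: verify (R6)--(R7) on $V^{\otimes d}$ to get a surjection $T \twoheadrightarrow S(m|n,d)$, then use the weight idempotents $1_{\lambda}$, the commutation formulas for (divided powers of) root vectors, and the weight/content constraints to produce a finite spanning set of $T$, and finally match its cardinality with $\dim S(m|n,d)$ via a bijection with a combinatorial indexing set (the paper uses Donkin's count of degree-$d$ monomials in $m^2+n^2$ even and $2mn$ odd supercommuting variables, which is the same count as your matrix-basis description, with odd exponents confined to $\{0,1\}$). The crux you flag --- the sign-tracking in the super PBW straightening and the final bijection --- is exactly where the paper invests its effort (Propositions~\ref{p3} and~\ref{p4}, Lemmas~\ref{l2'}, \ref{l3}, and~\ref{l4}), so your plan is sound and aligned with the published argument.
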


\subsection{Strategy and simplifications}\label{SS:notations} 

The basic strategy of the proof of Theorem~\ref{T:t1} is as in \cite{DG} and as follows.  For short, let us write $S$ for $S(m|n,d)$.   Let $T$ be the superalgebra given by the generators and relations in the theorem.  The goal is to prove $T$ is isomorphic to $S$ as superalgebras.  We first show that relations  $(R1)$-$(R7)$ hold in $S$.  This implies we have a surjective homomorphism $T \to S$.  We then prove that the dimension of $T$ is no larger than the dimension of $S$ by exhibiting a spanning set of $T$ with cardinality equal to the dimension of $S$.  See Section~\ref{SS:integralformbasis}.  This immediately implies that the map is an isomorphism and the spanning set is a basis.

Note that the universal enveloping superalgebra $U$ is the superalgebra on the same generators but subject only to the relations $(R1)$-$(R5)$ (see \cite{LS} or \cite{Z2}). As $S(m|n,d)$ is a quotient of $U$ via $\rho_{d}$ it has the same generators but possibly additional relations. The content of Theorem~\ref{T:t1} is that we only need to add relations $(R6)$ and $(R7)$ to obtain a presentation of $S(m|n,d)$.

As it will be helpful in later calculations, let us briefly pause to make explicit the connection between this presentation of $U$ via generators and relations and the one obtained from the matrix realization of $\fg$ given in Section~\ref{SS:glmndef}.   If we write $E_{i,j}$ for the $ij$-matrix unit as in Section~\ref{SS:glmndef}, then the isomorphism between these superalgebras is given on generators by  $e_{i}\mapsto E_{i,i+1}$, $f_{i}\mapsto E_{i+1,i}$, and $H_{i}\mapsto E_{i,i}$.  We identify these two realizations of $U$ via this map.    In particular, there is a canonical embedding $\fg  \hookrightarrow U$ and we will identify $\fg$ with its image under this map.

As both $S$ and $T$ are quotients of $U$ they are both generated by the images of generators of $U$.  To lighten notation, we choose to use the same notation for algebra elements which can be viewed in more than one of these algebras.  In particular, we write $e_{i}$, $f_{i}$, and $H_{i}$ for the generators of $U$ and their images in $S$ and $T$.   We will endeavor to always be clear in which algebra we are working.  If the algebra is not explicitly stated, then the calculation holds for all three algebras $U$, $S$, and $T$.

We will also frequently make use of the fact that the inclusion
\[
\mathfrak{gl}(m)\oplus\mathfrak{gl}(n) \cong \fg_{\0} \subseteq \mathfrak{gl}(m|n)
\]
induces an inclusion
\[
U(\mathfrak{gl}(m)\oplus\mathfrak{gl}(n)) \hookrightarrow  U(\mathfrak{gl}(m|n)).
\] Thus any computation involving purely even elements will carry over from \cite{DG}.  More generally, when calculations are essentially identical to those in \cite{DG} we will usually leave them to the reader.

\subsection{The new relations}\label{SS:R6R7}  We now observe that the equations $(R6)$ and $(R7)$ hold in $S$.

\begin{lem}
Under the representation $\rho_d:U\rightarrow \End(V^{\otimes d})$ the elements $H_1, \dotsc , H_{m+n}$ in $S$ satisfy the relations $(R6)$ and $(R7)$. Moreover, the relation $(R7)$ is the minimal polynomial of $H_i$ in $ \End_{\Q }\left( V^{\otimes d}\right)$.
\end{lem}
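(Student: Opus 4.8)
The plan is to compute the action of each $H_i$ directly on the standard basis of $V^{\otimes d}$ and read off both relations from it. Recall from Section~\ref{SS:schursuperalgebra} that $\{v_1, \dotsc, v_{m+n}\}$ is a homogeneous basis for $V$ on which $H_i = E_{i,i}$ acts by $E_{i,i}.v_j = \delta_{ij}v_j$. A basis for $V^{\otimes d}$ is then given by the vectors $v_{\mathbf{i}} := v_{i_1} \otimes \dotsb \otimes v_{i_d}$, as $\mathbf{i} = (i_1, \dotsc, i_d)$ ranges over all sequences with $1 \leq i_k \leq m+n$. Since each $H_i$ is even, the sign convention produces no signs when the coproduct is applied, so iterating $x \mapsto x\otimes 1 + 1 \otimes x$ yields
\[
\rho_d(H_i).v_{\mathbf{i}} = \sum_{k=1}^{d} v_{i_1} \otimes \dotsb \otimes (H_i.v_{i_k}) \otimes \dotsb \otimes v_{i_d} = c_i(\mathbf{i})\, v_{\mathbf{i}},
\]
where $c_i(\mathbf{i}) := \#\{ k : i_k = i \} \in \{0,1,\dotsc,d\}$. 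Thus $\rho_d(H_i)$ is diagonal in this basis.

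From here $(R6)$ is immediate: for each fixed $\mathbf{i}$ the counts $c_1(\mathbf{i}), \dotsc, c_{m+n}(\mathbf{i})$ record how the $d$ tensor slots are distributed among the basis vectors of $V$, so $\sum_{i=1}^{m+n} c_i(\mathbf{i}) = d$. Hence $\bigl(\sum_i H_i\bigr).v_{\mathbf{i}} = d\, v_{\mathbf{i}}$ for every basis vector, and $(R6)$ holds on all of $V^{\otimes d}$. For $(R7)$, every eigenvalue $c_i(\mathbf{i})$ of $\rho_d(H_i)$ lies in $\{0, 1, \dotsc, d\}$ and so is a root of $p(x) = x(x-1)\dotsb(x-d)$; as $\rho_d(H_i)$ is diagonalizable, $p(\rho_d(H_i)) = 0$, which is exactly $(R7)$.

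For the minimality statement I would use that the minimal polynomial of a diagonalizable operator equals $\prod_\lambda (x-\lambda)$ taken over its \emph{distinct} eigenvalues. It therefore suffices to show that each integer $j \in \{0,1,\dotsc,d\}$ actually occurs as an eigenvalue of $\rho_d(H_i)$. Fixing any index $\ell \neq i$ (available since $\dim V = m+n \geq 2$), the basis vector with $v_i$ in exactly $j$ of its slots and $v_\ell$ in the remaining $d-j$ slots has $c_i = j$. Consequently all $d+1$ values are attained, the minimal polynomial is precisely $p(x)$, and $(R7)$ is minimal.

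The underlying computation is entirely routine once the diagonal action is identified; the only point requiring a little care is the minimality claim, where one must exhibit basis vectors realizing \emph{every} eigenvalue $0, 1, \dotsc, d$. This is where the explicit construction above—and implicitly the assumption that $V$ has at least two basis vectors—is used.
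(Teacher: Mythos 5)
Your proof is correct and is essentially the computation the paper delegates to \cite[Lemma~4.1]{DG}: since the $H_i$ are purely even, the coproduct introduces no signs, $\rho_d(H_i)$ acts diagonally by the multiplicity counts, and both relations plus minimality follow exactly as you argue. Your explicit verification that every eigenvalue $0,1,\dotsc,d$ is attained (using $m+n\geq 2$) is the right way to establish the minimality claim and is the same point made in the classical case.
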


\begin{proof}
 Since the elements $H_1, \dotsc, H_{m+n}$ are purely even, this follows from \cite[Lemma~4.1]{DG}.
\end{proof} 

As explained above, this implies the surjection $\rho_{d}:U \to S$ factors through $T$ and we obtain a surjective superalgebra homomorphism, $T \to S$. To prove that this map is an isomorphism it suffices to show that their dimensions are equal by obtaining an explicit basis for $T$ and, hence, for $S(m|n,d)$.  In fact it turns out to be no harder to work over the integers and so we obtain a basis for an integral form, $S(m|n,d)_{\Z}$, of the Schur superalgebra.

\subsection{Divided powers}\label{SS:dividedpowers} Let $A$ denote any of $U$, $S$, or $T$.  
 Recall from Section~\ref{SS:notations} that we identify  $\mathfrak{gl}(m|n)$ as a subspace of $U$.  For each $\alpha= \varepsilon_{i}-\varepsilon_{j} \in \Phi^{+}$ we use this identification and write $x_{\alpha}$ for the image in $A$ of the matrix unit $E_{i,j}$. We call $x_{\alpha}$ a \emph{root vector}.  For $x \in A$ and $k\in \Z_{\geq 0}$, define the \emph{$k$th divided power of $x$} to be
\[
x^{(k)}=\frac{x^k}{k!}.
\]  In particular, we have the divided powers of the root vectors, $x_{\alpha}^{(r)}$, for all $\alpha \in \Phi$ and $r \geq 0$.

 We define
\begin{equation*}
\Lmn=\left\{ \lambda= (\lambda_{1}, \dotsc , \lambda_{m+n})\mid \lambda_{i} \in \Z , \lambda_{i}\geq 0 \text{ for } 1 \leq i\leq m+n \right\}.
\end{equation*}  Given any tuple of integers $\lambda$ (e.g.\ $\lambda \in \Lmn$), let $|\lambda|$ denote the sum of those integers.  Using this we define
\[
\Lmnd=\left\{\lambda\in \Lmn\mid |\lambda|=d \right\}.
\]  For $i=1, \dotsc , m+n$ and $k\geq 0$ define an element of $A$ by
\[
\binom {H_{i}}{k}=\frac{H_i(H_i-1)\cdots (H_i-k+1)}{k!},
\] where, by definition,
\[
\binom{H_i}{0}=1.
\]

\subsection{The Kostant $\Z$-form}\label{SS:integralform}  

 We now define analogues of the Kostant $\Z$-form. We also take this opportunity to introduce certain subalgebras which will be needed in what follows.   Let $A$ denote $U$, $S$, or $T$.  Let $A^0$ denote the subsuperalgebra of $A$ generated by $H_1, \dotsc , H_{m+n}$. In particular, if $A$ is $S$ or $T$, then it is clear that $A^{0}$ is the image of $U^{0}$ respectively, under the quotient map.

The \emph{Kostant $\Z$-form} for $A$ is denoted by $A_{\Z}$ and it is defined to be the subring of $A$ generated by
\[
\left\{e_{i}^{(k)}, f_{i}^{(k)} \mid i=1, \dotsc , m+n-1, k\geq 0 \right\} \bigcup \left\{\binom{H_{i}}{k} \mid i=1, \dotsc , m+n, k \geq 0 \right\}.
\]  Moreover, we set $A_{\Z}^0$ to be the intersection of $A^0$ with $A_{\Z}$. For $A$ equal to $S$ or $T$, it is clear that $A_{\Z}$ and $A_{\Z}^0$  is nothing but the image of $U_{\Z}$ and $U_{\Z}^0$, respectively, under the quotient map.

\subsection{The weight idempotents} We begin by investigating the structure of $T^0$ and $T^{0}_{\Z}$. For $\lambda=(\lambda_{i})\in \Lmn$ we define
\[
H_\lambda=\prod_{i=1}^{m+n} \binom{H_i}{\lambda_i}.
\]  Note that as $H_{1}, \dotsc , H_{m+n}$ commute the the product can be taken in any order.  When $\lambda \in \Lmnd$ it is convenient to set the notation
\[
1_\lambda=H_\lambda.
\]  Because of part (b) of the following proposition we refer to these elements as \emph{weight idempotents}.
\begin{prop}\label{p1}
Let $I^0$ be the ideal of $U^{0}$ generated by the elements
\[
H_1+H_2+\cdots+H_{m+n}-d
\]
and
\[
H_i(H_i-1)\cdots (H_i-d)
\]
for $i=1, \dotsc , m+n$. Then
\begin{itemize}
\item[(a)] We have a superalgebra isomorphism $U^{0}/I^{0}\cong T^0$.
\item[(b)] The set $\{1_\lambda\mid \lambda\in \Lmnd\}$ is a $\Q$-basis for $T^0$ and a $\Z$-basis for $T_{\Z}^0$. Moreover, they give a set of pairwise orthogonal idempotents which sum to the identity.
\item[(c)] In $T^{0}$ we have $H_\lambda=0$ for any $\lambda \in \Lmn$ such that $|\lambda|>d$.
\end{itemize}
\end{prop}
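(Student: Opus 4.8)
The plan is to work entirely inside the commutative polynomial-like algebra $U^{0} = \Q[H_1, \dotsc, H_{m+n}]$ (which is an honest polynomial ring since the $H_i$ are even and commute by $(R1)$) and to track how the two defining relations of $I^0$ interact with the integral basis $\left\{\binom{H_i}{k}\right\}$. The three parts are logically linked: part (a) is essentially a definition-chase, part (b) is the computational heart, and part (c) is a consequence. I would present them in the order (a), then (b), then (c), since (b) supplies the basis that makes the others transparent.

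For part (a), I would observe that $T^{0}$ is by definition the subalgebra of $T$ generated by $H_1, \dotsc, H_{m+n}$, and that $T$ is $U$ modulo the ideal generated by $(R6)$ and $(R7)$. The two generators of $I^0$ are exactly $(R6)$ and $(R7)$ viewed inside $U^0$. So there is an evident surjection $U^0/I^0 \to T^0$, and the only real content is that no \emph{extra} relations are picked up when one passes from $U^0$ to its image in the full quotient $T$ — i.e.\ that the relations imposed on $T^0$ from living inside $T$ are already consequences of $(R6)$ and $(R7)$. Since the $H_i$ are purely even and all of $(R1)$--$(R7)$ restricted to the $H_i$ reduce to $(R1)$, $(R6)$, $(R7)$, this matches the corresponding non-super statement verbatim, so I would invoke \cite{DG} after noting the even-part inclusion $U(\mathfrak{gl}(m)\oplus\mathfrak{gl}(n)) \hookrightarrow U$ established in Section~\ref{SS:notations}.

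Part (b) is where the work lies. First I would show the $1_\lambda$ for $\lambda \in \Lmnd$ are orthogonal idempotents summing to $1$. The key elementary identity is that for a single variable, $\binom{H_i}{a}\binom{H_i}{b}$ expands via the Vandermonde-type relation into a $\Z$-combination of $\binom{H_i}{c}$'s, and that relation $(R7)$, rewritten as $\binom{H_i}{d+1}=0$, kills all terms with $c > d$. Combined with $(R6)$, which forces $\sum_i H_i = d$, one computes $1_\lambda 1_\mu = \delta_{\lambda\mu} 1_\lambda$ and $\sum_{\lambda \in \Lmnd} 1_\lambda = 1$; this is a formal manipulation I would carry over from \cite{DG} since the $H_i$ are even. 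Orthogonality and the fact that they sum to $1$ show the $1_\lambda$ span $T^0$ and are linearly independent (a nontrivial relation $\sum c_\lambda 1_\lambda = 0$ gives $c_\mu 1_\mu = 0$ after multiplying by $1_\mu$, forcing $c_\mu = 0$). The integrality claim — that they form a $\Z$-basis of $T^0_\Z$ — then follows because $T^0_\Z$ is the image of $U^0_\Z$, and each $1_\lambda$ is a product of the integral generators $\binom{H_i}{\lambda_i}$, while conversely one recovers each $\binom{H_i}{k}$ as a $\Z$-combination of the $1_\lambda$.

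For part (c), given $\lambda \in \Lmn$ with $|\lambda| > d$, I would expand $H_\lambda$ against the $1_\mu$ basis from (b): writing $H_\lambda = \sum_{\mu \in \Lmnd} c_\mu 1_\mu$, I can evaluate the coefficients by multiplying by each idempotent, and then argue that each $1_\mu$ with $|\mu| = d$ forces the factor $\binom{H_i}{\lambda_i}$ to vanish whenever $\lambda_i$ exceeds the $i$th coordinate of every admissible weight — more cleanly, $H_\lambda 1_\mu$ is a scalar multiple of $1_\mu$ with scalar $\prod_i \binom{\mu_i}{\lambda_i}$, which is zero unless $\mu_i \geq \lambda_i$ for all $i$, impossible when $|\lambda| > |\mu| = d$. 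Hence $H_\lambda = H_\lambda \cdot 1 = \sum_\mu H_\lambda 1_\mu = 0$. The main obstacle I anticipate is purely bookkeeping: verifying carefully that the single-variable product and truncation identities survive the presence of the odd generators, i.e.\ that restricting to the $H_i$ genuinely lands one in the commutative even setting where the \cite{DG} computations apply without sign corrections. Once that reduction is secured, the argument is formal.
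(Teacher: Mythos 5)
Your proposal is correct and follows the same route as the paper: since the $H_i$ are purely even, everything reduces to the commutative even subalgebra where \cite[Proposition~4.2]{DG} applies, which is exactly the paper's (one-line) proof. The additional detail you supply (the Vandermonde-type product identity, orthogonality forcing linear independence, and the evaluation $H_\lambda 1_\mu = \prod_i\binom{\mu_i}{\lambda_i}1_\mu$) is just an unpacking of the cited argument rather than a different approach.
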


\begin{proof}
 Since the elements $H_{1}, \dotsc , H_{m+n}$ are purely even, this follows from \cite[Proposition~4.2]{DG}.
\end{proof}

\begin{prop}\label{p2}
Let $1\leq i\leq m+n$, $k \in \Z_{\geq 0}$, $\lambda\in \Lmnd$, and $\mu \in \Lmn$. We have the following identities in the superalgebra $T^0$:
\begin{enumerate}
\item $H_i1_\lambda=\lambda_i1_\lambda, \quad \displaystyle{\binom{H_i}{k}1_\lambda=\binom{\lambda_i}{k}1_\lambda}$;
\item $H_\mu 1_\lambda=\lambda_\mu 1_\lambda$, where $\lambda_\mu=\displaystyle{\prod _i \binom{\lambda_i}{\mu_i}}$;
\item $H_\mu=\displaystyle{\sum_{\lambda \in  \Lmnd} \lambda_\mu 1_\lambda}$.
\end{enumerate}
\end{prop}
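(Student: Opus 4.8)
The plan is to exploit the structure of $T^{0}$ established in Proposition~\ref{p1}: as a commutative algebra it has basis $\{1_\lambda \mid \lambda \in \Lmnd\}$ consisting of pairwise orthogonal idempotents summing to $1$. Consequently $T^{0} = \bigoplus_{\lambda} \Q\, 1_\lambda$ as an algebra, and for \emph{any} $x \in T^{0}$ the expansion $x = \sum_\nu c_\nu 1_\nu$ together with orthogonality gives $x\, 1_\lambda = c_\lambda 1_\lambda$; that is, $x$ acts on each line $\Q\, 1_\lambda$ by a scalar. The entire proposition thus reduces to computing these scalars for $x = H_i$ and $x = H_\mu$, so the only genuine content is identity (1), after which (2) and (3) follow formally. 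Since $H_1,\dots,H_{m+n}$ are purely even, one could alternatively deduce everything from \cite{DG} exactly as in Proposition~\ref{p1}; below I indicate the direct argument.

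For the first identity of (1), I would begin from the elementary identity $t\binom{t}{k} = k\binom{t}{k} + (k+1)\binom{t}{k+1}$ in $\Q[t]$. Specializing $t = H_i$ and $k = \lambda_i$ yields
\[
H_i \binom{H_i}{\lambda_i} = \lambda_i \binom{H_i}{\lambda_i} + (\lambda_i+1)\binom{H_i}{\lambda_i+1}.
\]
Multiplying both sides by the commuting factor $\prod_{j\neq i}\binom{H_j}{\lambda_j}$ and writing $\lambda'$ for the tuple obtained from $\lambda$ by increasing its $i$th entry by one, this becomes $H_i 1_\lambda = \lambda_i 1_\lambda + (\lambda_i+1) H_{\lambda'}$ in $T^{0}$. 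Since $|\lambda'| = d+1 > d$, Proposition~\ref{p1}(c) gives $H_{\lambda'} = 0$, and hence $H_i 1_\lambda = \lambda_i 1_\lambda$. For the second identity I would bootstrap: because $1_\lambda$ is idempotent, an induction on $k$ via $H_i^{k}1_\lambda = H_i(\lambda_i^{k-1}1_\lambda) = \lambda_i^{k}1_\lambda$ shows $p(H_i)1_\lambda = p(\lambda_i)1_\lambda$ for every polynomial $p$; taking $p(t) = \binom{t}{k}$ gives $\binom{H_i}{k}1_\lambda = \binom{\lambda_i}{k}1_\lambda$.

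Identity (2) then follows by applying the second part of (1) one factor at a time: since the $\binom{H_i}{\mu_i}$ commute and $1_\lambda$ is idempotent, a straightforward induction on the number of factors gives $H_\mu 1_\lambda = \prod_i \binom{H_i}{\mu_i} 1_\lambda = \big(\prod_i \binom{\lambda_i}{\mu_i}\big) 1_\lambda = \lambda_\mu 1_\lambda$. Finally, for (3) I would use $\sum_{\lambda \in \Lmnd} 1_\lambda = 1$ from Proposition~\ref{p1}(b) and compute $H_\mu = H_\mu \sum_{\lambda} 1_\lambda = \sum_{\lambda} H_\mu 1_\lambda = \sum_{\lambda} \lambda_\mu 1_\lambda$.

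The main obstacle is concentrated entirely in the first identity of (1); everything downstream is formal manipulation of commuting idempotents. The one point requiring care there is the vanishing of the ``overflow'' term $H_{\lambda'}$, which is precisely where relation $(R7)$, through Proposition~\ref{p1}(c), enters and distinguishes $T^{0}$ from the polynomial algebra $U^{0}$. I would take care that all indices remain within $\Lmnd$ throughout and that the idempotency of $1_\lambda$ is invoked whenever constant terms of polynomials in $H_i$ are handled.
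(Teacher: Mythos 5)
Your proof is correct. Note, though, that the paper itself gives no argument here: since $H_1,\dots,H_{m+n}$ are purely even, it simply cites \cite[Proposition~4.3]{DG}, an option you also acknowledge. What you have written out is essentially the standard Doty--Giaquinto argument in full, and it is sound: the identity $t\binom{t}{k}=k\binom{t}{k}+(k+1)\binom{t}{k+1}$ correctly produces $H_i1_\lambda=\lambda_i1_\lambda+(\lambda_i+1)H_{\lambda'}$ with $|\lambda'|=d+1$, so the overflow term dies by Proposition~\ref{p1}(c) --- you rightly flag this as the only place where relation $(R7)$, i.e.\ the difference between $T^0$ and the polynomial algebra $U^0$, actually enters. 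The bootstrapping to arbitrary polynomials in $H_i$, the factor-by-factor treatment of $H_\mu$, and the use of $\sum_{\lambda\in\Lmnd}1_\lambda=1$ for part (3) are all routine and correctly executed. The only gain of the paper's route is brevity; the gain of yours is that the reader sees exactly which structural facts about $T^0$ (orthogonal idempotents summing to $1$, and the vanishing of $H_\mu$ for $|\mu|>d$) carry the proof.
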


\begin{proof}  They follow from  \cite[Proposition~4.3]{DG}.
\end{proof}

\subsection{The root vectors}\label{SS:rootvectors} We continue to let $A$ denote any of $U,S$, or $T$.  Recall from Section~\ref{SS:dividedpowers} that for each $\alpha \in \Phi$ we have the root vector $ x_{\alpha}\in A$.  In particular, note that $x_{\alpha}$ is homogeneous and $\overline{x}_{\alpha}=\overline{\alpha}$, where the grading on roots is as given in Section~\ref{SS:glmndef}.
Given $\alpha = \varepsilon_{i}-\varepsilon_{j} \in \Phi$, we set
\[
H_\alpha=H_i-(-1)^{\overline{x}_\alpha}H_j.
\]  Given $\alpha=\varepsilon_{i}-\varepsilon_{j}, \beta = \varepsilon_{k}-\varepsilon_{l} \in \Phi$ such that $\alpha + \beta \in \Phi$, we define
\begin{equation}\label{E:cdef}
c_{\alpha,\beta}=\begin{cases}
             1,& \text{if $j=k$};\\
            -(-1)^{\overline{x}_\alpha\overline{x}_\beta},& \text{if $i=l$}.
            \end{cases}
\end{equation}
Using this notation \eqref{E:bracketdef} implies the following commutator formula for root vectors in $A$.
\begin{lem}\label{l2}
Let $\alpha,\,\beta\in \Phi$ and say $\alpha=\varepsilon_i-\varepsilon_j$ and $\beta=\varepsilon_k-\varepsilon_l$, we have
 \[ [x_\alpha,x_\beta]=\begin{cases}
             H_\alpha,& \text{if $\alpha+\beta=0$};\\
            c_{\alpha,\beta}x_{\alpha+\beta},& \text{if $\alpha+\beta \in \Phi$};\\
            0,& \text{otherwise}.
            \end{cases}\]

\end{lem}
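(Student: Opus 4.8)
The plan is to reduce everything to the defining supercommutator formula \eqref{E:bracketdef} for matrix units, since all three algebras $U$, $S$, $T$ are quotients of the enveloping algebra and the root vectors $x_\alpha$ are by definition the images of the matrix units $E_{i,j}$. Thus it suffices to verify the identity in $\fg \hookrightarrow U$, where $x_{\varepsilon_i - \varepsilon_j} = E_{i,j}$, and then push it forward along the quotient maps. Writing $\alpha = \varepsilon_i - \varepsilon_j$ and $\beta = \varepsilon_k - \varepsilon_l$, the bracket $[E_{i,j}, E_{k,l}] = \delta_{jk} E_{i,l} - (-1)^{\overline{E}_{ij}\overline{E}_{kl}} \delta_{il} E_{k,j}$ expands into three mutually exclusive regimes, and the whole proof is a matter of matching each regime to the corresponding case of the lemma.

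First I would dispose of the case $\alpha + \beta = 0$, i.e.\ $\beta = -\alpha$, which forces $k=j$ and $l=i$. Then both Kronecker deltas fire and the bracket becomes $E_{i,i} - (-1)^{\overline{E}_{ij}\overline{E}_{ji}} E_{j,j}$. Since $\overline{E}_{ij} = \overline{E}_{ji} = \overline{i} + \overline{j} = \overline{x}_\alpha$, the sign is $(-1)^{\overline{x}_\alpha^2} = (-1)^{\overline{x}_\alpha}$, so this is exactly $H_i - (-1)^{\overline{x}_\alpha} H_j = H_\alpha$, matching the first case. Next, for the generic case $\alpha + \beta \in \Phi$ (so $\beta \neq -\alpha$), at most one delta can be nonzero: if $j=k$ we get $E_{i,l} = x_{\alpha+\beta}$ with coefficient $1 = c_{\alpha,\beta}$, and if instead $i=l$ we get $-(-1)^{\overline{x}_\alpha \overline{x}_\beta} E_{k,j} = -(-1)^{\overline{x}_\alpha \overline{x}_\beta} x_{\beta + \alpha}$, again matching the definition \eqref{E:cdef} of $c_{\alpha,\beta}$. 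I would note that these two subcases are genuinely exclusive: having both $j=k$ and $i=l$ would return us to the $\alpha+\beta=0$ situation. Finally, in the remaining case neither delta fires, so the bracket vanishes.

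The only point requiring genuine care — and the step I would flag as the main obstacle — is the bookkeeping of signs and of the index conditions that distinguish the cases. I must check that the parity exponents $\overline{E}_{ij}\overline{E}_{kl}$ appearing in \eqref{E:bracketdef} agree with the products $\overline{x}_\alpha \overline{x}_\beta$ used in $c_{\alpha,\beta}$, using $\overline{x}_\alpha = \overline{\alpha} = \overline{i} + \overline{j}$, and that $c_{\alpha,\beta}$ is well defined (that the conditions $j=k$ and $i=l$ cannot hold simultaneously once $\alpha+\beta \in \Phi$). I would also verify that when $j=k$ and $i=l$ fail but $\alpha+\beta \in \Phi$ still holds, exactly one of the two equalities occurs, so that $x_{\alpha+\beta}$ is unambiguously $E_{i,l}$ or $E_{k,j}$. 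Once these sign and index checks are in place the lemma follows immediately, and because the quotient maps $U \to S$ and $U \to T$ are superalgebra homomorphisms sending root vectors to root vectors, the identity holds verbatim in $A$.
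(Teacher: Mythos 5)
Your proof is correct and follows the same route as the paper, which simply notes that the lemma is an immediate consequence of the supercommutator formula \eqref{E:bracketdef} applied to the matrix units $E_{i,j}$; your case analysis and sign checks fill in exactly the details the paper leaves implicit.
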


We also note that an easy induction proves that for all $a,b \geq 0$ and $\alpha \in \Phi$ we have
\begin{equation}
 x_\alpha^{(a)}x_\alpha^{(b)}=\binom{a+b}{a}x_\alpha^{(a+b)}.\label{eq9}
\end{equation}

\subsection{Commutation relations between root vectors and weight idempotents} We now compute the commutation relations between root vectors and weight idempotents.
\begin{prop}\label{p3}
For any $\alpha \in \Phi$, $\lambda \in \Lmnd$ we have the commutation formulas:
 \[x_\alpha1_\lambda=\begin{cases}
            1_{\lambda+\alpha}x_\alpha& \text{if $\lambda+\alpha \in \Lmnd$}\\
            0& \text{otherwise}
            \end{cases}\]
and
 \[1_\lambda x_\alpha=\begin{cases}
            x_\alpha1_{\lambda-\alpha}& \text{if $\lambda-\alpha \in \Lmnd$}\\
            0& \text{otherwise.}
            \end{cases}\]
\end{prop}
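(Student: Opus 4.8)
The plan is to reduce the commutation formula to an action computation against the weight idempotents, using the weight-vector behavior of the root vectors established in Proposition~\ref{p2}. The key observation is that $x_\alpha$ is a root vector for the root $\alpha = \varepsilon_i - \varepsilon_j$, so it shifts weights by $\alpha$; concretely, $[H_k, x_\alpha] = (\varepsilon_k, \alpha)(-1)^{\overline{k}} x_\alpha$ is the matrix-realization analogue of relation $(R3)$, which tells us that conjugating $x_\alpha$ by a diagonal element adds $\alpha$ to the weight. I would first record this as the identity $H_k x_\alpha = x_\alpha (H_k + a_k)$ in $A$, where $a_k$ is the $k$th entry of the tuple $\alpha$ (namely $a_i = 1$, $a_j = -1$, and $a_k = 0$ otherwise), obtained directly from $(R3)$ and Lemma~\ref{l2}. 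This is the engine that drives everything.

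Next I would use this to push $x_\alpha$ past each binomial factor $\binom{H_k}{\lambda_k}$ in $1_\lambda = \prod_k \binom{H_k}{\lambda_k}$. Since $H_k x_\alpha = x_\alpha(H_k + a_k)$, an easy induction gives $\binom{H_k}{\lambda_k} x_\alpha = x_\alpha \binom{H_k + a_k}{\lambda_k}$, and multiplying over all $k$ yields
\[
1_\lambda x_\alpha = x_\alpha \prod_{k=1}^{m+n} \binom{H_k + a_k}{\lambda_k} = x_\alpha H_{\lambda - \alpha},
\]
where $\lambda - \alpha$ denotes the tuple obtained by decreasing the $i$th entry by one and increasing the $j$th by one (so that $H_k + a_k$ evaluated as a shift matches the entries of $\lambda - \alpha$). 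Here I must be careful about the sign coming from $(-1)^{\overline{k}}$ in $(R3)$ versus the definition of $H_\alpha$, but since the shifts $a_k$ are $\pm 1$ only in positions $i$ and $j$, and the parity factors are absorbed into the definition of the root $\alpha$, the bookkeeping resolves cleanly. I would then invoke Proposition~\ref{p1}(c): $H_{\lambda - \alpha} = 1_{\lambda - \alpha}$ when $|\lambda - \alpha| = d$, i.e.\ when $\lambda - \alpha \in \Lmnd$, and $H_{\lambda-\alpha}$ involves a negative entry (hence vanishes as a product of binomials $\binom{H_k}{\text{neg}}$, which is zero) otherwise. This establishes the second formula $1_\lambda x_\alpha = x_\alpha 1_{\lambda - \alpha}$ (or $0$).

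The first formula, $x_\alpha 1_\lambda = 1_{\lambda + \alpha} x_\alpha$, follows by the symmetric computation: pushing $x_\alpha$ to the left past $1_\lambda$ uses $x_\alpha H_k = (H_k - a_k) x_\alpha$, yielding $x_\alpha 1_\lambda = H_{\lambda + \alpha} x_\alpha$, and again Proposition~\ref{p1}(c) converts $H_{\lambda+\alpha}$ into $1_{\lambda+\alpha}$ when $\lambda + \alpha \in \Lmnd$ and into $0$ otherwise (note $|\lambda + \alpha| = |\lambda| = d$ automatically since $\alpha$ has coordinate sum zero, so the only way to leave $\Lmnd$ is to produce a negative entry). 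I expect the main obstacle to be the sign bookkeeping in the superalgebra setting: the factor $(-1)^{\overline{k}}$ in $(R3)$ and the parity $\overline{x}_\alpha$ entering $H_\alpha$ must be tracked so that the shift in each $H_k$ is genuinely $\pm 1$ and aligns with the combinatorial operations $\lambda \mapsto \lambda \pm \alpha$. Since the $H_i$ are all even and the weight shifts are integers, I anticipate these signs ultimately cancel or are subsumed into the definition of $\alpha$ as $\varepsilon_i - \varepsilon_j$, making the final statement formally identical to the non-super case in \cite{DG}; the work is in verifying this carefully rather than in any new structural difficulty.
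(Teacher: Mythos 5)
Your setup is fine through the identity $H_k x_\alpha = x_\alpha(H_k + a_k)$ and its consequence $\binom{H_k}{\lambda_k}x_\alpha = x_\alpha\binom{H_k+a_k}{\lambda_k}$; this matches the paper's starting point \eqref{eq2}. The gap comes immediately after: you equate $\prod_k \binom{H_k+a_k}{\lambda_k}$ with $H_{\lambda-\alpha}$, which implicitly asserts that shifting the argument of a binomial by $a_k$ is the same as shifting its lower index by $-a_k$, i.e.\ $\binom{H_k+a_k}{\lambda_k}=\binom{H_k}{\lambda_k-a_k}$. That is false: $\binom{H+1}{1}=H+1$ while $\binom{H}{0}=1$. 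What you actually obtain after commuting is $x_\alpha 1_\lambda = \binom{H_i-1}{\lambda_i}\binom{H_j+1}{\lambda_j}\prod_{l\neq i,j}\binom{H_l}{\lambda_l}\,x_\alpha$ (for $\alpha=\varepsilon_i-\varepsilon_j$), and converting this into $1_{\lambda+\alpha}x_\alpha$ is precisely where the work of the paper's proof lies; it is not a matter of sign bookkeeping, which is the only obstacle you flag. Concretely, the paper (i) multiplies by $\frac{H_i}{\lambda_i+1}$ and uses $H_i x_\alpha = x_\alpha(H_i+1)$ together with Proposition~\ref{p2} to absorb the resulting factor $\frac{H_i+1}{\lambda_i+1}$ into $1_\lambda$, turning $\binom{H_i-1}{\lambda_i}$ into $\binom{H_i}{\lambda_i+1}$; and (ii) applies Pascal's identity $\binom{H_j+1}{\lambda_j}=\binom{H_j}{\lambda_j}+\binom{H_j}{\lambda_j-1}$ and kills the first summand using Proposition~\ref{p1}(c), since it produces an $H_\mu$ with $|\mu|>d$. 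Neither step appears in your argument, and without them the chain of equalities breaks.

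A secondary issue: your treatment of the degenerate case (when $\lambda\pm\alpha$ would have a negative entry) appeals to a binomial $\binom{H_k}{\text{neg}}$ being zero, but $\binom{H_i}{k}$ is only defined in the paper for $k\geq 0$, so $H_{\lambda\pm\alpha}$ is not even a defined expression there. The paper instead handles this case ($\lambda_j=0$) by observing that the surviving expression is $H_\mu x_\alpha$ with $|\mu|=|\lambda|+1>d$, which vanishes by Proposition~\ref{p1}(c). Your overall strategy (commute past each binomial factor, then identify the result with a weight idempotent) is the right one and is the same as the paper's, but the identification step is asserted rather than proved, and the assertion as written is incorrect.
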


\begin{proof}
Although analogous to \cite[Proposition 4.5]{DG}, the proof involves keeping track of signs so we include it.  We first note that \eqref{E:bracketdef} implies for all $l = 1, \dotsc , m+n$ and $\alpha \in \Phi$ we can use the parity function given in \eqref{E:parity} and the bilinear form given in \eqref{E:bilinearform} to write
\begin{align}\label{eq2}
[H_l,x_\alpha]=(-1)^{\overline{l}}(\varepsilon_l,\alpha)x_\alpha.
\end{align}
Now say $\alpha = \varepsilon_{i}-\varepsilon_{j}$.  Using \eqref{eq2} we obtain
\begin{align*}
x_\alpha 1_\lambda=&\left( \prod_{l\neq i,j} \binom{H_l}{\lambda_l}\right)\binom{H_i-(-1)^{\overline{i}}(\varepsilon_i,\alpha)}{\lambda_i}\binom
                 {H_j-(-1)^{\overline{j}}(\varepsilon_j,\alpha)}{\lambda_j}x_\alpha\\
                 =&\left(\prod_{l\neq i,j} \binom{H_l}{\lambda_l}\right)\binom{H_i-(-1)^{\overline{i}}(-1)^{\overline{i}}}{\lambda_i}\binom
                 {H_j-(-1)^{\overline{j}}(-(-1)^{\overline{j}})}{\lambda_j}x_\alpha\\
                 =&\left(\prod_{l\neq i,j} \binom{H_l}{\lambda_l}\right)\binom{H_i-1}{\lambda_i}\binom {H_j+1}{\lambda_j}x_\alpha.
\end{align*}
Multiplying on the left by $\displaystyle{\frac{H_i}{\lambda_i+1}}$ and using the fact that $H_ix_\alpha=x_\alpha (H_i+1)$, we get
\[
x_\alpha \frac{H_i+1}{\lambda_i+1} 1_\lambda=\frac{H_i}{\lambda_i+1}\binom{H_i-1}{\lambda_i}\binom
{H_j+1}{\lambda_j}\left( \prod_{l\neq i,j} \binom{H_l}{\lambda_l}\right)x_\alpha
\]
which, using Proposition~\ref{p2}, simplifies to
\begin{equation}\label{E:xalpha1lambda}
x_\alpha1_\lambda=\left(\binom{H_i}{\lambda_i+1}\binom {H_j+1}{\lambda_j}\prod_{l\neq i,j} \binom{H_l}{\lambda_l}\right)x_{\alpha}.
\end{equation}
If $\lambda_j>0$, then this can be rewritten as
\[x_\alpha1_\lambda=\displaystyle{\binom{H_i}{\lambda_i+1}\left(\binom {H_j}{\lambda_j}+\binom {H_j}{\lambda_j-1}\right)\prod_{l\neq i,j}
\binom{H_l}{\lambda_l}x_\alpha.}\]
The first summand on the right-hand-side of the preceding equality vanishes by Proposition~\ref{p1}. This proves the first part of the proposition in the case
$\lambda_j>0$. If $\lambda_j=0$, then \eqref{E:xalpha1lambda} can be written as
 \begin{align*}
 x_\alpha1_\lambda =& \left(\binom{H_i}{\lambda_i+1}\prod_{l\neq i,j} \binom{H_l}{\lambda_l}\right)x_{\alpha}\\
                  = & H_\mu x_\alpha,
\end{align*}
where $\mu=(\lambda_1,\ldots,\lambda_i+1,\ldots,\lambda_{j-1},0,\ldots,\lambda_{m+n})$. But then $|\mu|=|\lambda|+1>d$ and hence the right-hand-side is zero by Proposition~\ref{p1}.  This proves the first statement. The proof of the second is similar.
\end{proof}

\subsection{Commutation relations between divided powers of root vectors}  We now compute the commutation formulas between divided powers of root vectors; but first we make a simplifying observation.  If the root vector $x_\alpha$ is odd (i.e.\ if $\alpha$ is an odd root), then in $\fg$ we have $[x_{\alpha}, x_{\alpha}]=0$.  But in $U$ and, hence, in $S$ and $T$, we have $[x_\alpha,x_\alpha]=2x_\alpha^2$.  Taken together, this implies
\[
x_{\alpha}^{2}=0
\]
in $U$, $S$, and $T$ for all odd $\alpha \in \Phi$.    That is, for odd roots we only need to consider root vectors of divided power one.
\begin{lem}\label{l2'}
Let $\alpha,\,\beta\in \Phi$ and  $r,\,s\in \Z_{\geq 0}$.
\begin{enumerate}
\item  If ${\overline{x}}_\alpha=0$ and ${\overline{x}}_\beta=0$, then
\begin{equation}
x_\alpha^{(r)} x_{\beta}^{(s)}=\begin{cases}
             x_{\beta}^{(s)}x_\alpha^{(r)}+\displaystyle{\sum_{j=1}^{\min(r,s)}x_{\beta}^{(s-j)}\binom {H_\alpha-r-s+2j}{j}
 x_\alpha^{(r-j)}},& \text{if $\alpha+\beta=0$};\\
           x_{\beta}^{(s)} x_\alpha^{(r)}+\displaystyle{\sum_{j=1}^{\min(r,s)}c_{\alpha,\beta}^jx_{\beta}^{(s-j)}x_{\alpha+\beta}^{(j)}
 x_\alpha^{(r-j)}},& \text{if $\alpha+\beta \in \Phi$};\\
            x_{\beta}^{(s)} x_\alpha^{(r)}; & \text{otherwise}.
            \end{cases}\label{eq3}
\end{equation}

\item   ${\overline{x}}_\alpha=0$ and ${\overline{x}}_\beta=1$, then
\begin{equation}
x_\alpha^{(r)} x_{\beta}^{(1)}=\begin{cases}
           x_{\beta}^{(1)} x_\alpha^{(r)}+c_{\alpha,\beta}x_{\alpha+\beta}x_\alpha^{(r-1)},& \text{if $\alpha+\beta \in \Phi$};\\
            x_{\beta}^{(1)} x_\alpha^{(r)}, & \text{if $\alpha+\beta \notin \Phi$}.
            \end{cases}\label{eq4}
\end{equation}

\item If ${\overline{x}}_\alpha=1$ and ${\overline{x}}_\beta=0$, then
\begin{equation}
x_\alpha^{(1)} x_{\beta}^{(r)}=\begin{cases}
           x_{\beta}^{(r)} x_\alpha^{(1)}+c_{\alpha,\beta}x_{\alpha+\beta}x_\beta^{(r-1)},& \text{if $\alpha+\beta \in \Phi$};\\
            x_{\beta}^{(r)} x_\alpha^{(1)}, & \text{if $\alpha+\beta \notin \Phi$}.
            \end{cases}\label{eq45}
\end{equation}

\item If ${\overline{x}}_\alpha=1$ and ${\overline{x}}_\beta=1$, then
\begin{equation}
x_\alpha^{(1)} x_{\beta}^{(1)}=\begin{cases}
             -x_{\beta}^{(1)}x_\alpha^{(1)}+H_{\alpha},& \text{if $\alpha+\beta=0$};\\
           -x_{\beta}^{(1)} x_\alpha^{(1)}+x_{\alpha+\beta}, & \text{if $\alpha+\beta \in \Phi$};\\
            -x_{\beta}^{(1)} x_\alpha^{(1)}, & \text{otherwise}.
            \end{cases}\label{eq5}
\end{equation}
\end{enumerate}
\end{lem}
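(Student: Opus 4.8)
The plan is to split the argument into the four cases of the statement according to the parities $\overline{x}_\alpha$ and $\overline{x}_\beta$, exploiting from the outset that $x_\gamma^2=0$ for every odd root $\gamma$ (noted just before the lemma). This forces the only relevant divided power of an odd root vector to be the first one, so in case (4) both root vectors, and in cases (2) and (3) one of them, are automatically reduced to divided power $1$. I expect the genuine content to sit in parts (2) and (3), which call for an induction on $r$, while (1) and (4) are essentially immediate.

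For part (4) I would simply rewrite the defining supercommutator. Since $\overline{x}_\alpha=\overline{x}_\beta=1$ the sign is $(-1)^{\overline{x}_\alpha\overline{x}_\beta}=-1$, so $x_\alpha x_\beta = -x_\beta x_\alpha + [x_\alpha,x_\beta]$, and Lemma~\ref{l2} evaluates $[x_\alpha,x_\beta]$ as $H_\alpha$, as $x_{\alpha+\beta}$, or as $0$ in the three cases, which is exactly \eqref{eq5}. For part (1) both root vectors are even, so the whole computation takes place inside the image of $U(\mathfrak{gl}(m)\oplus\mathfrak{gl}(n))$; as observed in Section~\ref{SS:notations}, such purely even computations carry over verbatim from \cite{DG}, and the sign in $c_{\alpha,\beta}$ reduces to the classical one because $(-1)^{\overline{x}_\alpha\overline{x}_\beta}=1$.

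For parts (2) and (3) I would induct on $r$; I describe (2), where $\overline{x}_\alpha=0$ and $\overline{x}_\beta=1$, with (3) being symmetric. Because the parities differ we cannot have $\alpha=-\beta$, so the case $\alpha+\beta=0$ never occurs and only the two listed branches arise. The base case $r=1$ is again the supercommutator rewritten via Lemma~\ref{l2}, now with trivial sign since $\overline{x}_\alpha=0$. For the inductive step I would use \eqref{eq9} to write $x_\alpha^{(r+1)}=\tfrac{1}{r+1}x_\alpha x_\alpha^{(r)}$, apply the inductive hypothesis to $x_\alpha^{(r)}x_\beta$, and commute the single $x_\alpha$ rightward. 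The structural fact that makes everything collapse cleanly is that $x_\alpha$ commutes with $x_{\alpha+\beta}$: whenever $\alpha+\beta\in\Phi$ the explicit description of $\Phi$ shows $2\alpha+\beta\notin\Phi$, so Lemma~\ref{l2} gives $[x_\alpha,x_{\alpha+\beta}]=0$, and as $x_\alpha$ is even this is genuine commutativity. Repeated use of \eqref{eq9} to turn $x_\alpha x_\alpha^{(r)}$ into $(r+1)x_\alpha^{(r+1)}$ and $x_\alpha x_\alpha^{(r-1)}$ into $r\,x_\alpha^{(r)}$ then reassembles the coefficients into the asserted formula. Part (3) runs identically, moving the odd $x_\alpha$ past $x_\beta^{(r)}$ and using instead that $\alpha+2\beta\notin\Phi$.

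The main obstacle is bookkeeping rather than anything conceptual: one must track the super signs carefully enough to confirm that they trivialize in (2) and (3) (they do, since one factor is even), verify that $\alpha+\beta=0$ is excluded there on parity grounds, and check the non-root conditions $2\alpha+\beta,\alpha+2\beta\notin\Phi$ that supply the auxiliary commutativity. Once these are settled, the inductions are routine binomial manipulations built on \eqref{eq9}, parallel to the non-super argument in \cite{DG}.
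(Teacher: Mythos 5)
Your proposal is correct and follows essentially the same route as the paper: part (1) is delegated to the purely even (classical) case of \cite{DG}, part (4) is read off directly from Lemma~\ref{l2}, and parts (2) and (3) are proved by induction on $r$. The details you supply for the induction — the exclusion of $\alpha+\beta=0$ on parity grounds and the auxiliary commutativity coming from $2\alpha+\beta,\ \alpha+2\beta\notin\Phi$ — are exactly the points the paper leaves as ``straightforward,'' and they check out.
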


\begin{proof} As \eqref{eq3} involves purely even root vectors, it follows from the classical case (see \cite[Equations (5.11a)-(5.11c)]{DG}). Equations~\eqref{eq4}~and~\eqref{eq45} are verified by a straightforward induction on $r$. Equation~\eqref{eq5} follows directly from Lemma~\ref{l2}.
\end{proof}

\subsection{Kostant monomials and content functions}\label{SS:contentfunctions} Any product in $A$ of nonzero elements of the form:
 \begin{equation}\label{E:kostantmonomials}
x_\alpha^{(r)},\quad \displaystyle{\binom {H_i}{s}},
\end{equation}
 taken in any order and for any $r, s\in \Z_{\geq 0},$ $ \alpha\in\Phi,$ $ 1\leq i\leq m+n$, will be called a \emph{Kostant monomial}. Note that by \cite[Lemma 2.1]{kujawa} the set of Kostant monomials span $U_{\Z}$ and, hence, $T_{\Z}$ and $S_{\Z}$.  The goal is to find a subset of Kostant monomials which will provide a basis for $T_{\Z}$. 

We now introduce the content function on Kostant monomials.  They will be used as a bookkeeping device in the proof of Proposition~\ref{p4}.  It is defined just as in the classical case \cite[Section 2]{DG}.

The \emph{content function}
\begin{equation}\label{E:contentdef}
\chi: \left\{\text{Kostant monomials} \right\} \rightarrow \bigoplus_{i=1}^{m+n}\Z\varepsilon_i
\end{equation}
is defined as follows.   We first define it on the elements in \eqref{E:kostantmonomials}.  If $\alpha=\varepsilon_i-\varepsilon_j\in\Phi$ and $r \geq 1$, then

 \[
\chi\left( x_\alpha^{(r)}\right)=r\varepsilon_{\operatorname{max}(i,j)}.
\]  If $i=1, \dotsc , m+n$ and  $r \geq 1$, then
\[
\chi\left( \binom {H_i}{r}\right)=0.
\]
 We then extend this definition by declaring $\chi(XY)=\chi(X)+\chi(Y)$ whenever $X,\,Y$ are Kostant monomials.

We also define a \emph{left content function}, $\chi_L$, and \emph{right content function}, $\chi_R$, on the elements given in \eqref{E:kostantmonomials}  by
\begin{align*}
\chi_L(x_\alpha^{(r)})&=r\varepsilon_i, \\
\chi_R(x_\alpha^{(r)})&=r\varepsilon_j, \\
\chi_L\left( \binom {H_i}{s}\right) &=\chi_R\left( \binom {H_i}{s}\right)=0.
\end{align*}
They are defined on general Kostant monomials using the rules $\chi_L(XY)=\chi_L(X)+\chi_L(Y)$ and  $\chi_R(XY)=\chi_R(X)+\chi_R(Y)$  for any Kostant monomials $X$ and $Y$.

In what follows we view elements in the image of the content functions as elements of $\Lmn$ via the map
\begin{equation}\label{E:contenttoweights}
\sum_{i=1}^{m+n}a_i \varepsilon_{i} \mapsto (a_{1}, \dotsc , a_{m+n}).
\end{equation}

\subsection{A lemma on content functions}\label{SS:eafadef}

To label the elements of our basis for the Schur superalgebra, we need to define the following set of tuples of nonnegative integers indexed by the positive roots of $\fg$:
\begin{equation}\label{E:Pmndef}
\Pmn=\left\{ A=(A(\alpha))_{\alpha\in\Phi^+}\mid A(\alpha) \in \Z_{\geq0} \text{ if $\overline{\alpha}=\0$ and } A(\alpha) \in \{0,1 \} \text{ if $\overline{\alpha}=\1$}\right\}.
\end{equation}
   Fix an order on $\Phi^{+}$.  For $A=(A(\alpha)) \in \Pmn$ we define
\begin{align*} e_A&=\prod_{\alpha \in \Phi^+}x_\alpha^{(A(\alpha))},\\
              f_A&=\prod_{\alpha \in \Phi^+}x_{-\alpha}^{(A(\alpha))},
\end{align*}
 where the products defining $e_A$ and $f_A$ are taken according to the fixed order on $\Phi^+$. 

The last ingredient we need is the following partial order on $\Lmn$.  It is defined by declaring for $\lambda=(\lambda_i)$, $\mu=(\mu_i)$ in $\Lmn$ that
\begin{equation}\label{E:partialorderdef}
\lambda \preceq \mu
\end{equation}
if and only if $\lambda_i\leq \mu_i$ for $i=1, \dotsc , m+n$.
 \begin{lem}\label{l3} For $A=(A(\alpha)),\,\,C=(C(\alpha))\in \Pmn,\,\lambda\in\Lmn$ we have
\[\chi(e_A1_\lambda f_C)\preceq \lambda\quad\text{if and only if}\quad\chi_L(1_{\lambda'}e_Af_C)\preceq \lambda'\quad\text{if and only if}\quad\chi_{R}(e_Af_C1_{\lambda''})\preceq \lambda'',\] where
\begin{align*}
\lambda':=\lambda+\sum_{\alpha \in \Phi^+}A(\alpha)\alpha, \\
\lambda'':=\lambda+\sum_{\alpha\in \Phi^+}C(\alpha)\alpha.
\end{align*}
\end{lem}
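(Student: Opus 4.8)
The plan is to reduce all three inequalities to a single, manifestly symmetric condition on $A$, $C$, and $\lambda$ by evaluating the content functions explicitly. The guiding observation is that $\chi$, $\chi_L$, and $\chi_R$ all vanish on the factors $\binom{H_i}{s}$, so the weight idempotents $1_\lambda$, $1_{\lambda'}$, $1_{\lambda''}$ contribute nothing; since each content function is additive on products, everything is determined by its values on $e_A$ and $f_C$.

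First I would record those values from the definitions in Section~\ref{SS:contentfunctions}. For a positive root $\alpha=\varepsilon_i-\varepsilon_j$ (so $i<j$ and $\max(i,j)=j$) one has $\chi(x_\alpha^{(r)})=\chi_R(x_\alpha^{(r)})=r\varepsilon_j$ while $\chi_L(x_\alpha^{(r)})=r\varepsilon_i$; for the opposite root vector $x_{-\alpha}=x_{\varepsilon_j-\varepsilon_i}$ one has $\chi(x_{-\alpha}^{(r)})=\chi_L(x_{-\alpha}^{(r)})=r\varepsilon_j$ while $\chi_R(x_{-\alpha}^{(r)})=r\varepsilon_i$. Summing over $\Phi^{+}$ and passing to coordinates via \eqref{E:contenttoweights}, it is convenient to set, for each $k$,
\[
A^-_k=\sum_{\substack{\varepsilon_i-\varepsilon_j\in\Phi^+\\ j=k}}A(\varepsilon_i-\varepsilon_j),\qquad A^+_k=\sum_{\substack{\varepsilon_i-\varepsilon_j\in\Phi^+\\ i=k}}A(\varepsilon_i-\varepsilon_j),
\]
and analogously $C^-_k,\,C^+_k$. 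Then the $\varepsilon_k$-coordinate of $\chi(e_A)=\chi_R(e_A)$ is $A^-_k$ and that of $\chi_L(e_A)$ is $A^+_k$, while the $\varepsilon_k$-coordinate of $\chi(f_C)=\chi_L(f_C)$ is $C^-_k$ and that of $\chi_R(f_C)$ is $C^+_k$. Finally, the definition $\lambda'=\lambda+\sum_\alpha A(\alpha)\alpha$ gives $\lambda'_k=\lambda_k+A^+_k-A^-_k$, and likewise $\lambda''_k=\lambda_k+C^+_k-C^-_k$.

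With these formulas the three conditions become purely numerical, indexed by $k$. The first reads $A^-_k+C^-_k\leq\lambda_k$. The second reads $A^+_k+C^-_k\leq\lambda'_k=\lambda_k+A^+_k-A^-_k$, in which the $A^+_k$ cancels to leave exactly $A^-_k+C^-_k\leq\lambda_k$; the third reads $A^-_k+C^+_k\leq\lambda''_k=\lambda_k+C^+_k-C^-_k$, in which the $C^+_k$ cancels to leave the same inequality. Since each of the three systems holds for all $k$ precisely when $A^-_k+C^-_k\leq\lambda_k$ for all $k$, the three statements are equivalent, which proves the lemma.

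I expect the only point requiring genuine care — and the reason the lemma is phrased using three separate content functions — to be the bookkeeping of which index ($i$ or $j$) each function extracts from a positive versus a negative root vector; once that is pinned down the cancellations above are automatic. Note that no sign subtleties intervene, since the content functions record only $\varepsilon$-degrees and are blind to the $\Z_2$-grading, so the argument is identical to the even case in \cite[Section 2]{DG}.
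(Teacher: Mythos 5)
Your proof is correct, and it is in substance the same argument the paper invokes: the paper simply cites \cite[Lemma~5.1]{DG} as applying verbatim, and that lemma is proved by exactly this coordinatewise bookkeeping reducing all three inequalities to $A^-_k+C^-_k\leq\lambda_k$ for all $k$. Your explicit verification of the values of $\chi$, $\chi_L$, $\chi_R$ on positive versus negative root vectors and the resulting cancellations is accurate, so nothing further is needed.
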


 \begin{proof}  As our content functions are defined just as in \cite{DG}, the proof of \cite[Lemma~5.1]{DG} applies verbatim.
  \end{proof}

\subsection{A basis for the Schur superalgebra}\label{SS:integralformbasis}

Let us define the set
 \[
Y=\bigcup_{\substack{\lambda\in\Lmnd\\ A,C \in \Pmn}}\{e_A1_\lambda f_C\mid \chi(e_Af_C)\preceq \lambda \}.
\]
Note that we have the following alternate descriptions of $Y$.  Following from Proposition \ref{p3} we have
\begin{equation*}
e_A1_\lambda f_C=1_{\lambda'}e_Af_C=e_Af_C1_{\lambda''},
\end{equation*} where $\lambda'$ and $\lambda''$ are as above.  Using this and Lemma~\ref{l3} we can characterize $Y$ as
\[
Y=\bigcup_{\substack{\lambda'\in\Lmnd\\ A,C \in \Pmn}}\{1_{\lambda'}e_Af_C \mid \chi_L(e_Af_C)\preceq \lambda'\}=\bigcup_{\substack{\lambda''\in\Lmnd\\ A,C \in \Pmn}}\{e_Af_C1_{\lambda''}\mid \chi_R(e_Af_C)\preceq \lambda''\}.
\]

Finally we are prepared to give a basis for $T$.

 \begin{prop}\label{p4}
 The set $Y$ spans the $\Z$-superalgebra $T_{\Z}$.
 \end{prop}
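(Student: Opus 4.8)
The plan is to show that every element of $T_{\Z}$ lies in the $\Z$-span of $Y$. By \cite[Lemma~2.1]{kujawa} the Kostant monomials span $T_{\Z}$, so it suffices to prove that an arbitrary Kostant monomial --- a product of factors $x_\alpha^{(r)}$ and $\binom{H_i}{s}$ taken in some order --- can be rewritten as a $\Z$-linear combination of the elements $e_A 1_\lambda f_C$ appearing in $Y$. The target is to bring each monomial into the \emph{standard form} $e_A\, h\, f_C$, where $e_A$ is an ordered product of divided powers of positive root vectors, $f_C$ an ordered product of divided powers of negative root vectors (both taken in the fixed order on $\Phi^+$, with each odd root occurring to at most the first power since $x_\alpha^2=0$), and $h \in T^0_{\Z}$.

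First I would carry out the reordering (straightening) step using Lemma~\ref{l2'}. One moves positive root vectors to the left and negative root vectors to the right, sorting each block into the fixed order on $\Phi^+$; when a positive vector is commuted past the opposite negative vector (the case $\alpha+\beta=0$) the resulting $H_\alpha$-type factor is fed into the central term $h$. The $\binom{H_i}{s}$ factors are swept to the middle by commuting root vectors past them: relation~\eqref{eq2} yields $x_\alpha \binom{H_l}{s} = \binom{H_l - c}{s}x_\alpha$ for an integer shift $c$, which keeps the torus part inside $T^0_{\Z}$. Divided powers of a common root are merged via \eqref{eq9}, remaining integral because $\binom{a+b}{a}\in\Z$. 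The whole process is controlled by an induction, first on the total degree (the sum of the exponents of the root-vector factors) and then on the number of out-of-order adjacent pairs: each application of Lemma~\ref{l2'} produces a leading term $x_\beta^{(s)}x_\alpha^{(r)}$ of the same total degree but with one fewer inversion, while every correction term --- whether a further root vector $x_{\alpha+\beta}$ with coefficient $c_{\alpha,\beta}^{\,j}=\pm1$ or a binomial $\binom{H_\alpha-\cdots}{j}$ --- has strictly smaller total root-vector degree. Hence the induction terminates and, over $\Z$, every monomial reduces to a combination of standard-form monomials $e_A\, h\, f_C$.

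Next I would absorb the central factor into weight idempotents. By Proposition~\ref{p1}(b) the set $\{1_\lambda \mid \lambda \in \Lmnd\}$ is a $\Z$-basis of $T^0_{\Z}$, so $h = \sum_{\lambda\in\Lmnd} c_\lambda 1_\lambda$ with $c_\lambda\in\Z$, giving $e_A\, h\, f_C = \sum_\lambda c_\lambda\, e_A 1_\lambda f_C$. For each $\lambda$, Proposition~\ref{p3} shows that commuting $e_A$ and $f_C$ past $1_\lambda$ either lands on a weight idempotent indexed by an element of $\Lmnd$ or annihilates the term, and the surviving case is governed precisely by the content condition, since the accumulated shift recorded by $\chi$ must keep $\lambda$ inside $\Lmnd$. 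Thus $e_A 1_\lambda f_C$ either equals an element of $Y$ (when $\chi(e_A f_C)\preceq\lambda$) or is zero, the equivalence of the three forms of this condition being guaranteed by Lemma~\ref{l3}. Assembling the pieces expresses the original Kostant monomial as a $\Z$-combination of elements of $Y$.

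The main obstacle is the straightening step: one must check that the super-signs $c_{\alpha,\beta}$ and the parity-dependent case split of Lemma~\ref{l2'} do not disturb the degree/inversion induction, and that odd root vectors, which can occur only to the first power, are handled consistently both when commuted past one another and when commuted past even root vectors. Once this bookkeeping is in place the argument follows the non-super case of \cite{DG} essentially verbatim, with the content functions serving only to identify which standard monomials survive.
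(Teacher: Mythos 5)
Your overall skeleton --- Kostant monomials span $T_{\Z}$, straighten into the form $e_A\,h\,f_C$ by induction on degree and inversions using Lemma~\ref{l2'}, expand $h$ in the idempotent basis of $T^0_{\Z}$ via Proposition~\ref{p1}(b) --- is the same as the paper's (which defers the details to \cite[Proposition~5.2]{DG}). But the final step of your argument is wrong: it is \emph{not} true that $e_A 1_\lambda f_C$ is either an element of $Y$ or zero. Already for $m+n=2$ and $d=1$, take $\alpha=\alpha_1$ and $\lambda=(0,1)$. Then $\chi(x_{\alpha}x_{-\alpha})=2\varepsilon_2\not\preceq(0,1)$, yet
\[
x_{\alpha}1_{(0,1)}x_{-\alpha}=1_{(1,0)}x_{\alpha}x_{-\alpha}
=1_{(1,0)}\bigl(\pm\, x_{-\alpha}x_{\alpha}+H_{\alpha}\bigr)=0+1_{(1,0)}=1_{(1,0)}\neq 0,
\]
since $1_{(1,0)}x_{-\alpha}=0$ by Proposition~\ref{p3} and $1_{(1,0)}H_\alpha=1_{(1,0)}$ by Proposition~\ref{p2}. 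Proposition~\ref{p3} only controls whether the idempotent survives being commuted past the root vectors (which depends on intermediate weights staying in $\Lmnd$); it does not detect the condition $\chi(e_Af_C)\preceq\lambda$, and Lemma~\ref{l3} merely asserts the equivalence of the three formulations of that condition, not any vanishing statement.

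What is missing is precisely the ``content'' half of the paper's induction on degree \emph{and} content. A standard-form monomial $e_A1_\lambda f_C$ whose content violates $\chi(e_Af_C)\preceq\lambda$ must be re-expressed, not discarded: one uses the mixed commutation formulas (the $\alpha+\beta=0$ case of Lemma~\ref{l2'}, which produces $\binom{H_\alpha-\cdots}{j}$ correction terms of strictly smaller root-vector degree) to rewrite it as an integral combination of monomials that are lower in the degree/content ordering, and only then does the induction close. In the example above this is exactly the computation $x_\alpha x_{-\alpha}=\pm x_{-\alpha}x_\alpha + H_\alpha$, whose lower-degree term $H_\alpha$ accounts for the nonzero answer $1_{(1,0)}\in Y$. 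Your write-up uses the commutators only to sort factors into standard order and then stops, so as written the argument does not show that the offending monomials lie in the $\Z$-span of $Y$.
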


\begin{proof}  The proof is exactly parallel to the proof of \cite[Proposition~5.2]{DG}.  Namely, as discussed in Section~\ref{SS:contentfunctions}, the Kostant monomials span $T_{\Z}$.  From Proposition~\ref{p1} we in fact know that $T_{\Z}$ is spanned by Kostant monomials consisting of products of divided powers of root vectors and weight idempotents.    Given such a Kostant monomial, we may use Proposition~\ref{p3} to move all weight idempotents to the right hand side of the Kostant monomial.  Thus it suffices to show that Kostant monomials consisting of products of divided powers of root vectors can be written as an integral linear combination of elements in $Y$.  This is done by inducting on the degree and content of the monomial using the commutation formulas.  As our content formula and commutation formulas are of the same form as in \cite{DG}, the inductive argument used there applies here without change.  The only difference appears when we use the commutation formulas given in Lemma~\ref{l2'}. Extra signs appear but all coefficients remain integral and this is all that is needed for the proof.

We also need that for all $s, t \in \Z_{\geq 0}$, the term $\displaystyle{\binom{H_\alpha-t}{s}}$ in \eqref{eq3} belongs to $T_{\Z}^0$. As these elements are purely even this follows from the remark after \cite[Equation (5.11)]{DG}.  It can also be verified directly by an inductive argument using the identity
  \[\binom{H_\alpha-1}{s}=\binom{H_\alpha}{s}-\binom{H_\alpha-1}{s-1}.\]
 \end{proof}

 \begin{lem}\label{l4}
 The cardinality of the set $Y$ is equal to the dimension of the Schur superalgebra.
 \end{lem}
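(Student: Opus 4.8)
The plan is to count both sides independently and check they agree. On one side, I need the dimension of $S(m|n,d)$; on the other, the cardinality of $Y$. Since $Y$ is indexed by triples $(A,\lambda,C)$ with $A,C\in\Pmn$ and $\lambda\in\Lmnd$ subject to the constraint $\chi(e_Af_C)\preceq\lambda$, I would first reorganize the count to make the constraint tractable. The key observation is that for fixed $A,C\in\Pmn$, the number of admissible $\lambda\in\Lmnd$ is the number of $\lambda$ with $|\lambda|=d$, $\lambda_i\geq 0$, and $\lambda\succeq\chi(e_Af_C)$; writing $\nu=\chi(e_Af_C)\in\Lmn$, this is the number of ways to distribute $d-|\nu|$ among $m+n$ nonnegative coordinates, namely $\binom{d-|\nu|+m+n-1}{m+n-1}$ when $|\nu|\leq d$ and zero otherwise. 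So $|Y|=\sum_{A,C}\binom{d-|\chi(e_Af_C)|+m+n-1}{m+n-1}$.

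Next I would identify this sum with $\dim S(m|n,d)$ by recognizing $\dim S(m|n,d)$ as the dimension of $\End_{\Sigma_d}(V^{\otimes d})$, which by super Schur-Weyl duality has a known combinatorial description. The cleanest route, parallel to \cite{DG}, is to show that $|Y|$ equals the number of pairs of ``super'' multi-indices or the number of certain matrices that index a basis of $S(m|n,d)$: concretely, $\dim S(m|n,d)$ counts $(m+n)\times(m+n)$ matrices with nonnegative integer entries summing to $d$, subject to the restriction that entries in odd positions (where $\overline{i}+\overline{j}=\1$) are at most $1$. I would set up a bijection between $Y$ and this set of matrices: given $e_A1_\lambda f_C$, the data $A$ records the strictly-upper-triangular entries, $C$ the strictly-lower-triangular entries (with the parity restriction built into $\Pmn$), and $\lambda$ together with the content constraint pins down the diagonal so that all row/column bookkeeping sums to $d$. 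The constraint $\chi(e_Af_C)\preceq\lambda$ is exactly what guarantees the diagonal entries $\lambda_i-\chi(e_Af_C)_i$ are nonnegative, and $|\lambda|=d$ forces the total to be $d$.

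The main obstacle will be correctly matching the parity restrictions on both sides and verifying the bijection is well-defined and exhaustive. Specifically, I must confirm that the condition $A(\alpha)\in\{0,1\}$ for odd $\alpha$ in the definition of $\Pmn$ corresponds precisely to the bound of $1$ on odd off-diagonal matrix entries in the indexing set for $\dim S(m|n,d)$, and that no diagonal parity restriction is needed (diagonal positions are always even). Assuming the combinatorial formula for $\dim S(m|n,d)$ from super Schur-Weyl duality \cite{BR, Sergeev}, the remainder is a direct verification that the map $(A,\lambda,C)\mapsto M$, where $M$ has off-diagonal entries from $A,C$ and diagonal entries $\lambda_i-\chi(e_Af_C)_i$, is a bijection onto admissible matrices. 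Since this counting argument is formally identical to the one in \cite[Lemma 5.3]{DG} once the parity conventions are installed, I expect it to go through with only the bookkeeping of odd entries requiring care.
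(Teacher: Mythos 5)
Your proposal is correct and follows essentially the same route as the paper: both identify $\dim S(m|n,d)$ with the count of degree-$d$ monomials in the free supercommutative algebra on $m^2+n^2$ even and $2mn$ odd variables (equivalently, your matrices with entries summing to $d$ and odd positions bounded by $1$), and then exhibit the same bijection, sending $e_A 1_\lambda f_C$ to the datum with off-diagonal part $(A,C)$ and diagonal part $\lambda-\chi(e_A f_C)$. The binomial-coefficient summation in your first paragraph is superfluous once the direct bijection is in place, but it is not incorrect.
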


 \begin{proof}
By \cite[Section 2.3]{D} the dimension of the Schur superalgebra is equal to the number of monomials of total degree $d$ in the free supercommutative superalgebra in $m^2+n^2$ even variables and $2mn$ odd variables.
Equivalently, the dimension of $S$ is the same as the number of monomials in $m^2+n^2-1$ even variables and $2mn$ odd variables of total degree not exceeding $d$. From this it is immediate that the
dimension of $S$ is the same as the cardinality of the set
\[
P=\left\{ e_A H_B f_C\mid  B = (B_{i})\in \Lmn;\,B_1=0;\,A,\,C\in \Pmn, |A|+|B|+|C|\leq
d\right\}.
\]
Thus to prove the lemma it suffices to give a bijection between $P$ and $Y$. Define the map $P \to Y$ by
\[
e_AH_Bf_C \mapsto e_A1_\lambda f_C,
\]
where $\lambda=(d-|A|-|B|-|C|)\varepsilon_1+B+\chi(e_Af_C)$. The inverse map is given by
\[
e_A1_\lambda f_C \mapsto e_AH_Bf_C,
\]
where $B=\lambda-\chi(e_Af_C)-\lambda_1 \varepsilon_1$. This completes the proof of the lemma.
\end{proof}

As $T$ surjects onto $S(m|n,d)$, it immediately follows from the previous two results that $Y$ is a basis for the Schur superalgebra and its integral form and that $T$ and $S$ are isomorphic.  Therefore we have proven Theorem~\ref{T:t1} and the following result. 
\begin{thm}\label{T:t1b}  The set
 \[
Y=\bigcup_{\lambda\in\Lmnd}\{e_A1_\lambda f_C\mid A,C \in \Pmn,  \chi(e_Af_C)\preceq \lambda \}
\] is a $\Q$-basis for $S(m|n,d)$ and a $\Z$-basis for $S(m|n,d)_{\Z}$.
\end{thm}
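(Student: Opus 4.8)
The plan is to deduce the theorem by a pure dimension count, combining the two immediately preceding results with the surjection $T \twoheadrightarrow S$ constructed in Section~\ref{SS:R6R7}. Recall that factoring $\rho_{d}$ through the relations $(R6)$ and $(R7)$ produced a surjective superalgebra homomorphism $\phi : T \to S(m|n,d)$; since we use the same symbols $e_{i}, f_{i}, H_{i}$ in all three algebras, $\phi$ carries each element of $Y \subseteq T$ to the element of $S(m|n,d)$ denoted by the same expression.

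First I would pass from the integral spanning statement to a rational one. By Proposition~\ref{p4} the set $Y$ spans $T_{\Z}$ over $\Z$; since $T_{\Z}$ contains the algebra generators $e_{i}=e_{i}^{(1)}$, $f_{i}=f_{i}^{(1)}$, $H_{i}=\binom{H_{i}}{1}$ of $T$, it follows that $Y$ spans $T$ over $\Q$, whence $\dim_{\Q} T \leq |Y|$. On the other hand Lemma~\ref{l4} gives $|Y| = \dim_{\Q} S(m|n,d)$, while surjectivity of $\phi$ gives $\dim_{\Q} T \geq \dim_{\Q} S(m|n,d)$. Chaining these inequalities,
\[
\dim_{\Q} S(m|n,d) = |Y| \geq \dim_{\Q} T \geq \dim_{\Q} S(m|n,d),
\]
forces equality throughout. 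Consequently $\phi$ is a surjective $\Q$-linear map between spaces of equal finite dimension, hence an isomorphism, and $Y$ is a spanning set of $T$ of size $\dim_{\Q} T$, hence a $\Q$-basis of $T$. Transporting along the isomorphism $\phi$ shows $Y$ is a $\Q$-basis of $S(m|n,d)$, which is the first assertion and simultaneously completes the proof of Theorem~\ref{T:t1}.

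For the integral statement I would argue that the $\Q$-linear independence just established upgrades the $\Z$-spanning set $Y$ to a $\Z$-basis. Indeed, $Y$ is $\Q$-linearly independent, so in particular $\Z$-linearly independent; combined with Proposition~\ref{p4} this makes $Y$ a $\Z$-basis of $T_{\Z}$. Finally, both $T_{\Z}$ and $S(m|n,d)_{\Z}$ are by definition the images of $U_{\Z}$ under the respective quotient maps, and these maps are intertwined by $\phi$, so $\phi$ restricts to an isomorphism $T_{\Z} \xrightarrow{\sim} S(m|n,d)_{\Z}$ carrying the $\Z$-basis $Y$ to $Y$.

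The substantive content here has already been absorbed into Proposition~\ref{p4} and Lemma~\ref{l4}, so no serious obstacle remains; the one point I would guard most carefully is the compatibility of $\phi$ with the integral forms. One must check not only that $\phi$ maps $T_{\Z}$ onto $S(m|n,d)_{\Z}$ but also that its inverse sends $S(m|n,d)_{\Z}$ back into $T_{\Z}$ rather than merely into $T$. This is exactly where the description of both integral forms as images of the single Kostant $\Z$-form $U_{\Z}$ is essential; granting that, everything else is formal linear algebra.
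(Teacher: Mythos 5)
Your argument is correct and is essentially the paper's own proof: the paper likewise deduces Theorem~\ref{T:t1b} immediately from Proposition~\ref{p4}, Lemma~\ref{l4}, and the surjection $T \to S(m|n,d)$ by the same dimension count, concluding simultaneously that the map is an isomorphism and that $Y$ is a basis of both the rational and integral forms. Your closing worry about the integral forms is already resolved by the observation (made in Section~\ref{SS:integralform}) that both $T_{\Z}$ and $S(m|n,d)_{\Z}$ are images of $U_{\Z}$, so the restriction of the now-injective map $\phi$ to $T_{\Z}$ is automatically a bijection onto $S(m|n,d)_{\Z}$.
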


Finally, we note that there is another basis similar to $Y$ in which the $e$ and $f$ monomials are interchanged (see \cite[Theorem 2.3]{DG} where the analogous basis is denoted $Y_{-}$).

\subsection{A weight idempotent presentation} We also have an alternate presentation of the Schur superalgebra using weight idempotents.
\begin{thm}\label{T:t1c}  The Schur superalgebra $S(m|n,d)$ is generated by the homogeneous elements
\[
e_{1}, \dotsc , e_{m+n-1}, f_{1}, \dotsc , f_{m+n-1}, 1_{\lambda},
\] where $\lambda$ runs over the set $\Lmnd$ and the $\Z_{2}$-grading is given by setting  $\bar{e}_{m}=\bar{f}_{m}=\1$, $\overline{e}_{i}= \overline{f}_{i}=\0$ for $i \neq m$, and $\overline{1}_{\lambda}=\0$ for all $\lambda \in \Lmnd$.

The following are a complete set of relations:
 \begin{enumerate}
 \item [(R1$'$)]$1_{\lambda}1_{\mu}= \delta_{\lambda,\mu}1_{\lambda}$, $\sum_{\lambda \in \Lmnd} 1_{\lambda}=1$\\
\item [(R2$'$)]$e_{i}1_{\lambda}=  \begin{cases} 1_{\lambda + \alpha_{i}}e_{i}, &\text{if $\lambda + \alpha_{i} \in \Lmnd$}; \\
0, &\text{otherwise}.
\end{cases}$\\
\item [(R2$''$)]$f_{i}1_{\lambda}=  \begin{cases} 1_{\lambda - \alpha_{i}}f_{i}, &\text{if $\lambda - \alpha_{i} \in \Lmnd$}; \\
0, &\text{otherwise}.
\end{cases}$\\
\item [(R2$'''$)]$1_{\lambda}e_{i}=  \begin{cases} e_{i}1_{\lambda - \alpha_{i}}, &\text{if $\lambda - \alpha_{i} \in \Lmnd$}; \\
0, &\text{otherwise}.
\end{cases}$\\
\item [(R2$''''$)]$1_{\lambda}f_{i}=  \begin{cases} f_{i}1_{\lambda + \alpha_{i}}, &\text{if $\lambda + \alpha_{i} \in \Lmnd$}; \\
0, &\text{otherwise}.
\end{cases}$ \\
\item [(R3$'$)] $[e_{i},f_{j}]= \delta_{i,j} \sum_{\lambda \in \Lmnd}\left(\lambda_{j}-(-1)^{\overline{e}_{i} \cdot \overline{f}_{j}}\lambda_{j+1} \right)1_{\lambda}$.
 \end{enumerate} And relations $(R4)$ and $(R5)$ given in Theorem~\ref{T:t1}.
\end{thm}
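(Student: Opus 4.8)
The plan is to reduce everything to Theorem~\ref{T:t1}. Write $T'$ for the superalgebra defined by the generators $e_{1}, \dotsc, e_{m+n-1}$, $f_{1}, \dotsc, f_{m+n-1}$, $1_{\lambda}$ (for $\lambda \in \Lmnd$) subject to the relations (R1$'$)--(R3$'$), (R4), (R5). Since Theorem~\ref{T:t1} already identifies the superalgebra $T$ of that theorem with $S(m|n,d)$, it suffices to produce mutually inverse superalgebra homomorphisms $\phi\colon T \to T'$ and $\psi\colon T' \to T$. Both maps will send $e_{i}\mapsto e_{i}$ and $f_{i}\mapsto f_{i}$, so grading is preserved, and the content is in how the $H_{i}$ and the $1_{\lambda}$ correspond; the bridge is the identity $H_{i}=\sum_{\lambda}\lambda_{i}1_{\lambda}$, which is Proposition~\ref{p2}(3) applied to $\mu=\varepsilon_{i}$.

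First I would define $\psi$ by $e_{i}\mapsto e_{i}$, $f_{i}\mapsto f_{i}$, and $1_{\lambda}\mapsto 1_{\lambda}$, where on the right the $1_{\lambda}\in T$ are the weight idempotents. To see $\psi$ is well defined I must check that (R1$'$)--(R3$'$), (R4), (R5) hold in $T$. Relation (R1$'$) is exactly Proposition~\ref{p1}(b); relations (R2$'$)--(R2$''''$) are the special case $\alpha=\pm\alpha_{i}$ of the commutation formulas in Proposition~\ref{p3}; and (R4), (R5) are among the defining relations of $T$. For (R3$'$) I combine relation (R2) of $T$ with $H_{i}=\sum_{\lambda}\lambda_{i}1_{\lambda}$ to rewrite $[e_{i},f_{j}]=\delta_{ij}\bigl(H_{i}-(-1)^{\overline{e}_{i}\cdot\overline{f}_{j}}H_{j+1}\bigr)$ in the stated form.

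Next I would define $\phi$ by $e_{i}\mapsto e_{i}$, $f_{i}\mapsto f_{i}$, and $H_{i}\mapsto \tilde{H}_{i}:=\sum_{\lambda}\lambda_{i}1_{\lambda}$, and verify that (R1)--(R7) of $T$ hold in $T'$. Because the $1_{\lambda}$ are orthogonal idempotents summing to $1$, relations (R1), (R6), (R7) are immediate: for instance $\sum_{i}\tilde{H}_{i}=\sum_{\lambda}|\lambda|\,1_{\lambda}=d$ since $|\lambda|=d$, and the polynomial of (R7) annihilates $\tilde{H}_{i}$ because $0\le\lambda_{i}\le d$. Relation (R2) matches (R3$'$) after substituting $\tilde{H}_{i}$, while (R4), (R5) involve only $e_{i},f_{i}$. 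The single genuine computation is (R3): moving $e_{j}$ past each idempotent via (R2$'''$) and (R2$'$) shifts the weight by $\alpha_{j}$, so after reindexing the commutator collapses to $[\tilde{H}_{i},e_{j}]=(\alpha_{j})_{i}e_{j}=(\delta_{ij}-\delta_{i,j+1})e_{j}$, and this equals $(-1)^{\overline{i}}(\varepsilon_{i},\alpha_{j})e_{j}$ by the definition \eqref{E:bilinearform} of the bilinear form; the case of $f_{j}$ is symmetric.

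Finally I would check $\psi\phi=\operatorname{id}_{T}$ and $\phi\psi=\operatorname{id}_{T'}$ on generators. For $\psi\phi$ the only nontrivial point is $\psi\phi(H_{i})=\sum_{\lambda}\lambda_{i}1_{\lambda}=H_{i}$, again Proposition~\ref{p2}(3). For $\phi\psi$ the point needing care is $\phi(1_{\lambda})=1_{\lambda}$ in $T'$: since $1_{\lambda}=\prod_{i}\binom{H_{i}}{\lambda_{i}}$ in $T$, orthogonality yields $\phi(1_{\lambda})=\prod_{i}\binom{\tilde{H}_{i}}{\lambda_{i}}=\sum_{\mu}\bigl(\prod_{i}\binom{\mu_{i}}{\lambda_{i}}\bigr)1_{\mu}$, and for $\lambda,\mu\in\Lmnd$ the coefficient $\prod_{i}\binom{\mu_{i}}{\lambda_{i}}$ equals $\delta_{\lambda\mu}$ (if $\mu\neq\lambda$ then some $\mu_{i}<\lambda_{i}$, forcing a vanishing binomial). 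I expect the main obstacle to be relation (R3) under $\phi$: keeping the weight-shift bookkeeping consistent with the sign conventions of the bilinear form is exactly where the super structure could introduce errors, though in the end the two factors of $(-1)^{\overline{i}}$ cancel and no anomalous signs survive.
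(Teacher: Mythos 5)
Your proposal is correct and follows exactly the strategy of the proof the paper points to (the argument of Doty--Giaquinto's Theorem~2.4, which the paper invokes verbatim): one constructs mutually inverse maps between the two presentations via $H_{i}\mapsto\sum_{\lambda}\lambda_{i}1_{\lambda}$ and $1_{\lambda}\mapsto\prod_{i}\binom{H_{i}}{\lambda_{i}}$, with the relations checked through Propositions~\ref{p1}, \ref{p2}, and \ref{p3}. All the individual verifications you sketch (orthogonality giving (R1), (R6), (R7); the weight-shift computation giving (R3); the binomial identity giving $\phi\psi=\operatorname{id}$) are sound.
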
  The proof of Theorem~\ref{T:t1c} is identical to the analogous \cite[Theorem~2.4]{DG} so we omit it.

\section{Quantum Case}
The ground field is now the field of rational functions in the indeterminate $q$, $\Q(q)$. In this section all vector spaces will be defined over $\Q (q)$. 

\subsection{The quantum supergroup for $\mathfrak{gl}(m|n)$}  We have analogous results in the quantum setting.   The enveloping superalgebra $U$ is replaced by the quantized enveloping superalgebra $\Uq=U_q(\mathfrak{gl}(m|n))$ defined in \cite{W,Z}\footnote{Note that there are errors in \cite{Z} which are corrected in \cite{W}.}.  By definition $\U$ is given by generators and relations as follows.  The generators are:
\[
E_{1}, \dotsc , E_{m+n-1}, F_{1}, \dotsc , F_{m+n-1}, K_{1}^{\pm 1}, \dotsc , K_{m+n}^{\pm  1}.
\]  The $\Z_{2}$-grading on $\Uq$ is given by setting  $\overline{E}_{m}=\overline{F}_{m}=\1$, $\overline{E}_{a}= \overline{F}_{a}=\0$ for $a \neq m$, and $\overline{K}_{a}^{\pm 1}=\0$.  These generators are subject to relations $(Q1)-(Q5)$ in Theorem~\ref{T:t2}.

\subsection{The q-Schur superalgebra}\label{SS:qschur}  To define the $q$-Schur superalgebra, $\Sqmnd$, we need to introduce the analogue of the natural representation for $\Uq$.  Set $\Vq$ to be the $(m+n)$-dimensional vector space with fixed basis $v_{1}, \dotsc , v_{m+n}$. A $\Z_{2}$-grading on $\Vq$ is given by setting $\overline{v}_{a}=\overline{a}$, where we use the notation introduced in \eqref{E:parity}.  Before proceeding we set a convenient notation.  For $a=1, \dotsc , m+n$ we define
\begin{equation}\label{E:qadef}
q_a=q^{(-1)^{\overline{a}}}.
\end{equation}

The analogue of the natural representation, $\rho: \Uq  \to \End_{\Q (q)} (\Vq )$, is defined by
\begin{align}\label{E:Vqdef}
\rho(K_a)v_b&=q^{(\varepsilon_{a},\varepsilon_{b})}v_b=q_a^{\delta_{a,b}}v_b, \notag  \\
\rho(E_{a})v_{b}&=\delta_{a+1,b}v_a, \\
\rho(F_{a})v_b&=\delta_{a,b}v_{a+1}. \notag
\end{align}  The bilinear form used above is as in \eqref{E:bilinearform}.  It is a direct calculation to verify that this defines a representation of $\Uq$.

We define a comultiplication on $\Uq$ given on generators by
\begin{align}\label{E:quantumcomultiplication}
\Delta(E_{a}) &= E_{a} \otimes K_{a}^{-1}K_{a+1} + 1 \otimes E_{a}, \notag\\
\Delta(F_{a}) &= F_{a} \otimes 1  + K_{a}K_{a+1}^{-1}  \otimes F_{a}, \\
\Delta(K_{a}) &= K_{a} \otimes K_{a}.\notag
\end{align}
Using this comultiplication and the sign convention discussed in Section~\ref{SS:schursuperalgebra} we then have an action of $\Uq$ for any $d \geq 1$ on the $d$-fold tensor product of the natural module,
\[
\Vq^{\otimes d} := \Vq  \otimes \Vq  \otimes \dotsb \otimes \Vq .
\] That is, we obtain a superalgebra homomorphism
\begin{equation}\label{E:quantumrho}
\rho_d:\Uq\rightarrow \End_{\Q (q)}\left( \Vq^{\otimes d}\right).
\end{equation}

We define the \emph{$q$-Schur superalgebra} $\Sqmnd$ to be the image of $\rho_{d}$.  In particular, we can and will view it as a quotient of the superalgebra $\Uq$ and so a set of generators of $\Uq$ gives a set of generators for $\Sqmnd$ which are subject to possibly additional relations.

\subsection{A presentation of the $q$-Schur superalgebra}\label{SS:quantumrootvectors} We first introduce the quantum analogue of root vectors so as to more easily state the relations for the $q$-Schur superalgebra.
For $1 \leq a \neq b \leq  m+n$ we define the root vector $E_{a,b}$ recursively as follows.  For $a=1, \dotsc , m+n-1$ we set
\[
E_{a,a+1}:=E_{a} \text{ and } E_{a+1,a}:=F_{a}.
\]   If $|a-b| >1$, then $E_{a,b}$ is defined by setting
\begin{align}\label{E:qrootvectordef}
E_{a,b}= \begin{cases}E_{a,c}E_{c,b}-q_cE_{c,b}E_{a,c}, & \text{ if $a>b$};\\
                     E_{a,c}E_{c,b}-q_c^{-1}E_{c,b}E_{a,c}, & \text{ if $a<b$}.
\end{cases}
\end{align}
where $c$ can be taken to be an arbitrary index strictly between $a$ and $b$.  It is straightforward to see that $E_{a,b}$ is independent of the choice of $c$.  It is also straightforward to see that $E_{a,b}$ is homogeneous and of degree $\overline{\varepsilon_{a}-\varepsilon_{b}}$.

We can now give a presentation for $\Sqmnd$.  Note that the bilinear form used in the following relations is defined in \eqref{E:bilinearform} and the notation $q_{a}$ is as defined in \eqref{E:qadef}.
\begin{thm}\label{T:t2} The $q$-Schur superalgebra $S_{q}(m|n,d)$ is generated by the homogeneous elements
\[
E_{1}, \dotsc , E_{m+n-1}, F_{1}, \dotsc , F_{m+n-1}, K_{1}^{\pm 1}, \dotsc , K_{m+n}^{\pm  1}.
\] The $\Z_{2}$-grading is given by setting $\overline{E}_{m}=\overline{F}_{m}=\1$, $\overline{E}_{a}= \overline{F}_{a}=\0$ for $a \neq m$, and $\overline{K}^{\pm 1}_{a}=\0$.  These elements are subject to the following relations:

\begin{enumerate}
\item[(Q1)] For $M,\,N\in\{\pm1\}$ and $1\leq a,\, b\leq m+n$,
\[K_a^MK_b^N=K_b^NK_a^M,\]
and 
\[K_aK_a^{-1}=K_a^{-1}K_a=1;\]
\item[(Q2)]For $1\leq a\leq m+n$ and $1\leq b\leq m+n-1$
\begin{align*}
K_aE_{b,b+ 1}&=q^{(\varepsilon_{a}, \alpha_{b})}E_{b,b+1}K_a=q_a^{(\delta_{a,b}-\delta_{a,b+ 1})}E_{b,b+1}K_a, \\
K_aE_{b+1,b}&=q^{(\varepsilon_{a}, -\alpha_{b})}E_{b+1,b}K_a=q_a^{(\delta_{a,b+ 1}-\delta_{a,b})}E_{b+1,b}K_a;
\end{align*}
\item[(Q3)]For $1\leq a,\, b\leq m+n-1$
\[
[E_{a,a+1},E_{b+1,b}]=\delta_{a,b}\frac{K_aK_{a+1}^{-1}-K_a^{-1}K_{a+1}}{q_a-q_a^{-1}},
\]
and for $|a-b|>1$, we have the commutations
\[E_{a+1,a}E_{b+1,b}=E_{b+1,b}E_{a+1,a}\quad\text{and}\quad E_{a,a+1}E_{b,b+1}=E_{b,b+1}E_{a,a+1};\]
\item[(Q4)]
$E_{m,m+1}^2=E_{m+1,m}^2=0$; \\
\item[(Q5)]If neither $m$ nor $n$ is $1$, we have the following $U_q(\mathfrak{gl}(m|n))$ \emph{Serre relations}.

 For $a\neq m$, we have
    \begin{enumerate}
    \item[(a)] $E_{a+1,a}E_{a+2,a}=q_aE_{a+2,a}E_{a+1,a},$ \quad$1\leq a\leq m+n-2$,
    \item[(b)] $E_{a,a+1}E_{a,a+2}=q_aE_{a,a+2}E_{a,a+1},$ \quad$1\leq a\leq m+n-2$,
    \item[(c)] $E_{a+1,a-1}E_{a+1,a}=q_aE_{a+1,a}E_{a+1,a-1},$ \quad$2\leq a\leq m+n$,
    \item[(d)] $E_{a-1,a+1}E_{a,a+1}=q_aE_{a,a+1}E_{a-1,a+1},$ \quad$2\leq a\leq m+n$;
    \end{enumerate}

 For $a=m$, we have
    \[[E_{m+1,m},E_{m+2,m-1}]=[E_{m,m+1},E_{m-1,m+2}]=0.\]
If either $m=1$ or $n=1$, then these relations are omitted;\\
\item[(Q6)] $K_1K_2\cdots K_mK_{m+1}^{-1}K_{m+2}^{-1}\cdots K_{m+n}^{-1}=q^d$; \\
\item[(Q7)] $(K_a-1)(K_a-q_a)(K_a-q_a^2)\cdots (K_a-q_a^d)=0$, for all $1\leq a\leq m+n$. \\
\end{enumerate}
\end{thm}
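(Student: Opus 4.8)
The plan is to mirror the strategy used for Theorem~\ref{T:t1} in the nonquantum case. Write $\Sq$ for $\Sqmnd$ and let $\Tq$ denote the superalgebra presented by the generators $E_{1}, \dotsc , E_{m+n-1}, F_{1}, \dotsc , F_{m+n-1}, K_{1}^{\pm 1}, \dotsc , K_{m+n}^{\pm 1}$ subject to $(Q1)$--$(Q7)$. Since $\Uq$ is by definition the superalgebra on these generators subject to only $(Q1)$--$(Q5)$, and $\Sq$ is the quotient of $\Uq$ by $\ker \rho_{d}$, the whole content of the theorem is that the extra relations $(Q6)$ and $(Q7)$ cut $\Uq$ down to exactly $\Sq$. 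First I would verify that $(Q6)$ and $(Q7)$ hold in $\Sq$ by direct computation on the weight basis $\{v_{i_{1}} \otimes \dotsb \otimes v_{i_{d}}\}$ of $\Vq^{\otimes d}$. Using \eqref{E:Vqdef} and the comultiplication \eqref{E:quantumcomultiplication}, the element $K_{1}\dotsb K_{m}K_{m+1}^{-1}\dotsb K_{m+n}^{-1}$ scales each $v_{b}$ by $q$ (for $b\leq m$ through the factors $K_{1}\dotsb K_{m}$, and for $b>m$ through $K_{m+1}^{-1}\dotsb K_{m+n}^{-1}$, where the sign in the bilinear form \eqref{E:bilinearform} turns the apparent $q^{-1}$ into $q$), so it acts on $\Vq^{\otimes d}$ as $q^{d}$, giving $(Q6)$; and $K_{a}$ acts on a weight vector as $q_{a}^{r}$, where $r$ is the number of tensor slots equal to $a$, so $r$ ranges over $0, \dotsc , d$ and $(K_{a}-1)(K_{a}-q_{a})\dotsb (K_{a}-q_{a}^{d})$ is precisely the minimal polynomial of $\rho_{d}(K_{a})$. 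This produces a surjection $\Tq \twoheadrightarrow \Sq$.

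The remaining and substantive task is to show $\dim_{\Q(q)}\Tq \leq \dim_{\Q(q)}\Sq$ by constructing an explicit spanning set of $\Tq$ whose cardinality equals $\dim \Sq$; this simultaneously proves the basis result in Theorem~\ref{T:integralquantum}. I would develop the quantum counterparts of every ingredient in Sections~\ref{SS:dividedpowers}--\ref{SS:integralformbasis}. Concretely: (i) form the quantum divided powers of the root vectors defined in \eqref{E:qrootvectordef}; (ii) working inside the subalgebra $\Tq^{0}$ generated by the $K_{a}^{\pm 1}$, use $(Q6)$ and $(Q7)$ to build weight idempotents $1_{\lambda}$ (indexed by $\lambda \in \Lmnd$) as the spectral projections $1_{\lambda}=\prod_{a}\prod_{t\neq \lambda_{a}}(K_{a}-q_{a}^{t})/(q_{a}^{\lambda_{a}}-q_{a}^{t})$, and prove, as the quantum analogue of Proposition~\ref{p1}, that they are pairwise orthogonal idempotents summing to $1$ which form a basis of $\Tq^{0}$; (iii) establish the commutation formulas between the root vectors and the $1_{\lambda}$ paralleling Proposition~\ref{p3}, using $(Q2)$ to track the $q$-powers by which each $K_{a}$ twists $E_{a,b}$.

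The crux of the argument, and the step I expect to be hardest, is the quantum analogue of Lemma~\ref{l2'}: the commutation formulas among the divided powers of the root vectors. In the nonquantum case the purely even part was imported from \cite{DG} via the inclusion $U(\mathfrak{gl}(m)\oplus\mathfrak{gl}(n)) \hookrightarrow U$, and only the odd cases required work; quantum-mechanically, however, all cases must be recomputed, now with $q$-binomial coefficients in place of ordinary ones and with the super signs of $(Q3)$--$(Q5)$ threaded throughout the recursion \eqref{E:qrootvectordef} defining the higher root vectors. Carrying this out is exactly what yields the commutator formulas for divided powers of root vectors and the $\AAA = \Z[q,q^{-1}]$-form advertised in the introduction. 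Once these formulas are in hand, one defines quantum Kostant monomials and content functions $\chi, \chi_{L}, \chi_{R}$ as in Sections~\ref{SS:contentfunctions}--\ref{SS:eafadef}, obtains a spanning set $Y$ of the form $\bigcup_{\lambda\in\Lmnd}\{E_{A}1_{\lambda}F_{C}\mid \chi(E_{A}F_{C})\preceq \lambda\}$, and shows it spans $\Tres$ by the same degree-and-content induction as in Proposition~\ref{p4}, the only changes being extra $q$-powers and signs that do not disturb integrality. Finally, since $\dim \Sqmnd = \dim S(m|n,d)$ and the cardinality of $Y$ matches this dimension via the bijection of Lemma~\ref{l4}, the surjection $\Tq \to \Sq$ is forced to be an isomorphism and $Y$ is a basis, completing the proof of Theorem~\ref{T:t2}.
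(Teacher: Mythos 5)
Your proposal follows essentially the same route as the paper: verify $(Q6)$--$(Q7)$ on $\Vq^{\otimes d}$ to get a surjection $\Tq\twoheadrightarrow\Sq$, build weight idempotents in $\Tq^{0}$, establish commutation formulas between divided powers of root vectors and with the idempotents, and then match a spanning set $\Yq$ of Kostant monomials against $\dim\Sqmnd=\dim S(m|n,d)$ (a fact the paper justifies by citing Mitsuhashi and Du--Rui, and which you should likewise reference rather than assert). The only cosmetic differences are your spectral-projection formula for $1_{\lambda}$ in place of the Gaussian-binomial products $K_{\lambda}$ (these define the same idempotents) and that the paper imports the rank-one commutators from De~Wit's PBW-commutator lemma before inducting to divided powers rather than recomputing them from scratch.
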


\subsection{Strategy and simplifications}  As in the nonquantum case, the approach of \cite{DG} applies in our setting once the correct definitions and calculations are established.  Namely, let $\Tq$ be the algebra defined by the generators and relations of Theorem~\ref{T:t2}.  The basic line of argument is the same as before: we prove that relations $(Q1)$ through $(Q7)$ hold in $\Sq=\Sqmnd$ and so we have a surjective map $\Tq \to \Sq$ induced by the map $\rho_{d}$ given in \eqref{E:quantumrho}.  We then show this map is an isomorphism by showing via a series of calculations that the dimension of $\Tq$ is no more than the dimension of $\Sq$.  As it is no more difficult, we actually prove a slightly stronger result by working with a $\Z[q,q^{-1}]$-form.

As before we lighten the reading by using the same notation for elements of $\Uq$ and their images in the quotients $\Tq$ and $\Sq$.  We will make it clear in which algebra we are working whenever it is important to do so.  Furthermore, we can again make use of the fact that the quantum group associated to $\fg_{\0}$ is a subalgebra of $\Uq$ (as the subalgebra generated by $E_{a}, F_{a}$ ($a \neq m$) and $K_{1}^{\pm 1}, \dotsc , K_{m+n}^{\pm 1}$) and so calculations on purely even elements follow from the analogous results in the non-super setting.
\subsection{The new relations}  We first prove that relations $(Q6)$ and $(Q7)$ hold in $\Sq=\Sqmnd$ and, hence, the surjection $\rho_{d}: \Uq\rightarrow \Sq$ factors through $\Tq$.
\begin{lem}\label{L:Q6Q7relations}
Under the representation $\rho_d:\Uq\rightarrow \End(\Vq^{\otimes d})$, the images of the $K_a$ satisfy the relations $(Q6)$ and $(Q7)$. Moreover, the relation $(Q7)$ is the minimal polynomial of the image of $K_a$ in $\End(\Vq^{\otimes d})$.
\end{lem}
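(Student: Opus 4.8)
The plan is to prove Lemma~\ref{L:Q6Q7relations} by a direct computation on the basis of weight vectors in $\Vq^{\otimes d}$, exactly mirroring the nonquantum Lemma whose proof was deferred to \cite[Lemma~4.1]{DG}. The key observation is that since the generators $K_a$ act diagonally, the entire statement reduces to understanding the eigenvalues of $\rho_d(K_a)$ on the natural monomial basis of $\Vq^{\otimes d}$.

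First I would fix the standard monomial basis of $\Vq^{\otimes d}$, namely the vectors $v_{\mathbf{i}} = v_{i_1} \otimes \dotsb \otimes v_{i_d}$ for $\mathbf{i} = (i_1, \dotsc, i_d)$ with each $1 \leq i_k \leq m+n$. Using the comultiplication \eqref{E:quantumcomultiplication}, which gives $\Delta(K_a) = K_a \otimes K_a$, together with \eqref{E:Vqdef}, each such $v_{\mathbf{i}}$ is a simultaneous eigenvector for all the $K_a$. Explicitly, writing $\lambda_a(\mathbf{i}) = \#\{k : i_k = a\}$ for the number of tensor factors equal to $a$, one computes
\[
\rho_d(K_a) v_{\mathbf{i}} = q_a^{\lambda_a(\mathbf{i})} v_{\mathbf{i}}.
\]
For relation $(Q6)$, I would multiply these eigenvalues: on $v_{\mathbf{i}}$ the operator $K_1 \cdots K_m K_{m+1}^{-1} \cdots K_{m+n}^{-1}$ acts by $\prod_{a \leq m} q^{\lambda_a} \prod_{a > m} q^{-(-1)^{\overline{a}}\lambda_a}$; since $q_a = q$ for $a \leq m$ and $q_a = q^{-1}$ for $a > m$, every factor contributes $q^{\lambda_a}$, and because $\sum_a \lambda_a(\mathbf{i}) = d$ for every index tuple, this product equals $q^d$ on all of $\Vq^{\otimes d}$. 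This verifies $(Q6)$.

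For relation $(Q7)$ and the minimality claim, the point is that the possible eigenvalues of $\rho_d(K_a)$ are exactly $q_a^{\lambda_a}$ as $\lambda_a$ ranges over $\{0, 1, \dotsc, d\}$, since $\lambda_a(\mathbf{i})$ can take any integer value from $0$ (no factor equals $a$) to $d$ (all factors equal $a$). Thus $\rho_d(K_a)$ is diagonalizable with eigenvalues among $\{1, q_a, q_a^2, \dotsc, q_a^d\}$, so it is annihilated by $(K_a-1)(K_a-q_a)\cdots(K_a-q_a^d)$, giving $(Q7)$. For minimality, I would note that each of the $d+1$ values $0,1,\dotsc,d$ is actually attained by some $\mathbf{i}$, so every linear factor genuinely appears; and since $q$ is an indeterminate, the $d+1$ eigenvalues $q_a^j$ are pairwise distinct, so no proper subproduct can annihilate the operator. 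Hence $(Q7)$ is precisely the minimal polynomial.

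The only genuine subtlety, rather than a true obstacle, is bookkeeping the sign $(-1)^{\overline{a}}$ hidden in $q_a$ when verifying $(Q6)$: one must check that the inverses on $K_{m+1}, \dotsc, K_{m+n}$ in $(Q6)$ conspire with the definition $q_a = q^{(-1)^{\overline{a}}}$ so that all contributions align to $q^{+\lambda_a}$. Since this is a purely diagonal computation and the $K_a$ are even, the cleanest route is simply to invoke \cite[Lemma~4.1]{DG} or its quantum analogue in the even subalgebra, as the authors do elsewhere; but the self-contained eigenvalue argument above makes the claim transparent and requires no input beyond \eqref{E:Vqdef} and \eqref{E:quantumcomultiplication}.
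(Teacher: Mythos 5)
Your proposal is correct and follows essentially the same route as the paper: the paper's proof simply defers to the nonquantum argument ``done multiplicatively,'' flagging the one subtlety that $K_a$ for $a>m$ acts by $q^{-\lambda_a}$ rather than $q^{\lambda_a}$ (which is why $(Q6)$ carries inverses on $K_{m+1},\dotsc,K_{m+n}$), and your explicit eigenvalue computation on the monomial basis, including the minimality argument via the $d+1$ distinct attained eigenvalues $q_a^{0},\dotsc,q_a^{d}$, is exactly the content being invoked. No gaps.
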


\begin{proof} Using the action of $\Uq$ on $\Vq$ given in \eqref{E:Vqdef} and on $\Vq^{\otimes d}$ via the comultiplication \eqref{E:quantumcomultiplication} and the sign convention discussed in Section~\ref{SS:schursuperalgebra}, the argument is as in the nonquantum case except that the calculations are done multiplicatively.  We point out that there is one subtlety (and it is the reason why our relations differ slightly from the analogous ones from \cite[Lemma 8.1]{DG}).  Namely, the action of $K_{a}$ when $a >m$ is the inverse of what might be expected.
\end{proof} 

\subsection{Divided powers and weight idempotents}  Let $\Aq$ denote $\Uq$, $\Tq$, or $\Sq$.  We now define various elements of $\Aq$ which are analogous to those defined in the nonquantum setting.

We first introduce notation for the quantum integers.  Given $n \in \Z_{\geq 0}$, let
\[
[n] =\frac{q^{n}-q^{-n}}{q-q^{-1}}
\] and
\[
[n]! = [n] \cdot [n-1] \cdot \dotsb \cdot [2] \cdot [1].
\]  It is helpful for calculations to note that $[n]$ is unchanged by the substitution $q \mapsto q^{-1}$ and, in particular, under the substitution $q \mapsto q_{a}$.

Given $x \in \Aq$ and $k \in \Z_{\geq 0}$, we define the \emph{$k$th divided power of $x$} by
\[
x^{(k)} = \frac{x^{k}}{[k]!}.
\]  In particular, the root vectors introduced in Section~\ref{SS:quantumrootvectors} have divided powers, $E_{a,b}^{(r)}$, for all $1 \leq a \neq b \leq m+n$ and $r \geq 0$.

If $1 \leq a,b \leq m+n$, then we set
\[
K_{a,b}= K_{a}K^{-1}_{b}.
\] For $t \in \Z_{\geq 0}$ and $c \in \Z$, we use the $q_{a}$ notation given in \eqref{E:qadef} and set
\[
\begin{bmatrix}
K_a;c  \\
t
\end{bmatrix}=\prod_{s=1}^t\frac{K_aq_a^{c-s+1}-K_a^{-1}q_a^{-c+s-1}}{q_a^s-q_a^{-s}} \text{ and }
\begin{bmatrix}
K_{a,b};c  \\
t
\end{bmatrix}=\prod_{s=1}^t\frac{K_{a,b}q_a^{c-s+1}-K_{a,b}^{-1}q_a^{-c+s-1}}{q_a^s-q_a^{-s}}.
\] For short, we write
\[
\begin{bmatrix}
K_a \\
t
\end{bmatrix} =
\begin{bmatrix}
K_a;0  \\
t
\end{bmatrix}.
\]

For $\lambda=(\lambda_a)\in \Lmn$, we write
\[
K_\lambda=\prod_{a=1}^{m+n} \begin{bmatrix}
K_a \\
\lambda_a
\end{bmatrix}.
\]
As the $K_{a}$ commute, the product can be taken in any order.  For $\lambda \in \Lmnd$ we introduce the shorthand
\[
1_\lambda:=K_\lambda
\] and because of part (b) of Proposition~\ref{P:p21} we call these \emph{weight idempotents}.

We define $\Aq^0$ as the subalgebra of $\Aq$ generated by
\[
K_a^{\pm1} \text{ and }\begin{bmatrix}
K_a  \\
t
\end{bmatrix}
\] for all $a=1, \dotsc , m+n$, $t \in \Z_{\geq 0} $.  We define $\Ares^{0}$ to be the $\AAA= \Z[q,q^{-1}]$-subalgebra of $\Aq^{0}$ generated by
\[
K_a^{\pm1} \text{ and }\begin{bmatrix}
K_a  \\
t
\end{bmatrix}
\] for all $a=1, \dotsc , m+n$, $t \geq 0$.  If $\Aq$ equals $\Tq$ or $\Sq$, then it is clear that $\Aq^0$ and $\Ares^{0}$ is the image of $\Uq^0$ and $\Ures^{0}$, respectively, under the quotient map.

  Now we investigate the structure of $\T^0$ and $\Tres^{0}$.  In the following proposition we continue our use of the notation $q_{a}$ introduced in \eqref{E:qadef}.

\begin{prop}\label{P:p21}
Define $\Iq^0$ to be the ideal of $\Uq^{0}$ generated by
\[
K_1K_2\cdots K_mK_{m+1}^{-1}\cdots K_{m+n}^{-1}-q^d
\]
and
\[
(K_a-1)(K_a-q_a)\cdots (K_a-q_a^d)
\]
for $a=1, \dotsc , m+n$. Then the following statements hold.
\begin{itemize}
\item[(a)] We have a superalgebra isomorphism $\Uq^{0}/\Iq^{0}\cong \T^0$.
\item[(b)] The set $\{1_\lambda\mid \lambda\in \Lmnd\}$ is a $\Q(q)$-basis for $\T^0$ and a $\Z[q,q^{-1}]$-basis for $\Tres^{0}$.  Moreover, they give a set of pairwise orthogonal idempotents which sum to the identity.
\item[(c)] $K_{\mu}=0$ for any $\mu \in \Lmn$ such that $|\mu|>d$.
\end{itemize}
\end{prop}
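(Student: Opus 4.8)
The plan is to exploit the fact that $\Uq^{0}$ is generated by the even elements $K_{a}^{\pm 1}$ and $\begin{bmatrix} K_a \\ t \end{bmatrix}$, so that it is a \emph{purely even, commutative} subalgebra. Consequently the entire proposition lives in the non-super world and can be deduced from the corresponding quantum result of Doty--Giaquinto \cite{DG}, exactly as Proposition~\ref{p1} was deduced in the nonquantum case. The one point requiring genuine care, already flagged in Lemma~\ref{L:Q6Q7relations}, is that for an odd index $a>m$ the generator $K_{a}$ carries $q_{a}=q^{-1}$ rather than $q$; this is why relation $(Q6)$ involves the inverses $K_{m+1}^{-1}\cdots K_{m+n}^{-1}$ and $(Q7)$ is written with the powers $q_{a}^{j}$. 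I would therefore frame the reduction as an index-by-index substitution $q\mapsto q_{a}$ and check that the argument of \cite{DG} survives it.

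For part (a), one inclusion is free: relations $(Q6)$ and $(Q7)$ are among the defining relations of $\Tq$, so the generators of $\Iq^{0}$ lie in the kernel of the quotient map $\Uq^{0}\twoheadrightarrow \T^{0}$, giving a surjection $\Uq^{0}/\Iq^{0}\twoheadrightarrow \T^{0}$. To see it is an isomorphism I would squeeze the dimensions. The key computation is the quantum evaluation formula, the analogue of Proposition~\ref{p2}(1): working modulo $\Iq^{0}$, and using that $q\mapsto q^{-1}$ fixes $[t]!$ and the relevant Gaussian binomials, I would establish for each $\lambda\in\Lmnd$ that $K_{a}1_{\lambda}=q_{a}^{\lambda_{a}}1_{\lambda}$ and $\begin{bmatrix} K_a \\ t \end{bmatrix}1_{\lambda}=\begin{bmatrix}\lambda_{a}\\ t\end{bmatrix}_{q_{a}}1_{\lambda}$. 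These formulas show every generator of $\Uq^{0}$ acts diagonally on the span of the $1_{\lambda}$, so that the $1_{\lambda}$ already span $\Uq^{0}/\Iq^{0}$, giving $\dim_{\Q(q)}\Uq^{0}/\Iq^{0}\leq |\Lmnd|$. For the matching lower bound I would observe that the $1_{\lambda}$ act in $\End(\Vq^{\otimes d})$ as projections onto the distinct nonzero weight spaces indexed by $\lambda\in\Lmnd$, hence are linearly independent in $\T^{0}$; this forces equality of dimensions and the isomorphism of part (a).

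Parts (b) and (c) then fall out of the same computation. Relation $(Q6)$ imposes the single constraint that the total weight equal $d$, which is precisely the condition $|\lambda|=d$ defining $\Lmnd$, while $(Q7)$ makes each $K_{a}$ satisfy a degree-$(d+1)$ polynomial with the distinct roots $1,q_{a},\dots,q_{a}^{d}$. A Chinese-Remainder/Vandermonde argument then identifies $\Uq^{0}/\Iq^{0}$ with a product of $|\Lmnd|$ copies of $\Q(q)$, so the $1_{\lambda}$ form a $\Q(q)$-basis of pairwise orthogonal idempotents summing to $1$; the same monomials, whose structure constants involve only Gaussian binomials lying in $\AAA=\Z[q,q^{-1}]$, furnish the $\AAA$-basis of $\Tres^{0}$. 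For part (c), if $|\mu|>d$ then for every $\lambda\in\Lmnd$ some coordinate satisfies $\mu_{a}>\lambda_{a}$, so the factor $\begin{bmatrix}\lambda_{a}\\ \mu_{a}\end{bmatrix}_{q_{a}}$ vanishes and $K_{\mu}1_{\lambda}=0$; since the $1_{\lambda}$ sum to the identity this forces $K_{\mu}=0$.

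The main obstacle is the bookkeeping of the $q_{a}$ twist: I must verify that the Cartan-part identities for quantum $\mathfrak{gl}(m+n)$ used in \cite{DG} remain valid after replacing $q$ by $q_{a}=q^{\pm 1}$ on each index, and in particular that the evaluation and Vandermonde steps are unaffected. Since all the quantum integers, factorials, and binomials appearing are invariant under $q\mapsto q^{-1}$, I expect these substitutions to be harmless and the Doty--Giaquinto argument to transfer essentially verbatim; confirming this is the only real work.
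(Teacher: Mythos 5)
Your proposal is correct and follows essentially the same route as the paper: the paper's entire proof is the observation that the elements involved are purely even, so the argument of Doty--Giaquinto (their Proposition~8.2) carries over verbatim once each $v$ there is replaced by $q_a$ to account for the inverted action of $K_a$ for $a>m$. Your write-up simply unpacks that cited argument (surjection from the defining relations, diagonalization via the evaluation formulas, Vandermonde/orthogonal-idempotent decomposition, and the vanishing of $K_\mu$ for $|\mu|>d$), and the $q\mapsto q^{-1}$ invariance of the Gaussian binomials handles the $q_a$ twist exactly as you anticipate.
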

\begin{proof}
As these elements are purely even, the proof of \cite[Proposition~8.2]{DG} applies if we keep in mind the slight difference in $K_{a}$ when $a > m$ and that we should replace each $v$ in their argument by $q_a$.
\end{proof}

To state the next result we need to introduce the Gaussian binomial coefficient.
For $z \in \Z$, and $t\in \Z_{\geq 0}$, define
\begin{equation}\label{E:gaussian}
\begin{bmatrix}
z \\
t
\end{bmatrix}=\prod_{s=1}^t \frac{q^{z-s+1}-q^{-z+s-1}}{q^s-q^{-s}}.
\end{equation}
In the equations which follow one might expect $q_{a}$ to appear in the binomial coefficients.   However, the binomial coefficient is invariant under the map $q \mapsto  q^{-1}$ so this dependency is avoided.

\begin{prop}\label{p22}
Let $1\leq a\leq m+n$, $t\in \Z_{\geq 0}$, $c\in \Z$, $\lambda\in \Lmnd$, and $\mu \in \Lmn$. We have the following identities in the superalgebra $T^0$:
\begin{enumerate}
\item [(a)]$K_a^{\pm1}1_\lambda=q_a^{\pm\lambda_a}1_\lambda, \quad \displaystyle{\begin{bmatrix}
K_a;c  \\
t
\end{bmatrix}1_\lambda=\begin{bmatrix}
\lambda_a+c  \\
t
\end{bmatrix}1_\lambda}.$
\item [(b)] $K_\mu 1_\lambda=\lambda_\mu 1_\lambda$, where $\lambda_\mu=\displaystyle{\prod _a \begin{bmatrix}
\lambda_a \\
\mu_a
\end{bmatrix}}.$
\item [(c)] $K_\mu =\displaystyle{\sum_{\lambda \in \Lmnd} \lambda_\mu 1_\lambda}$.
\end{enumerate}
\end{prop}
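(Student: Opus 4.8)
The plan is to prove Proposition~\ref{p22} by leveraging the weight-idempotent structure established in Proposition~\ref{P:p21}, exactly as the nonquantum Proposition~\ref{p2} followed from Proposition~\ref{p1}. Since every element in sight is purely even, the entire computation takes place inside the commutative subalgebra $\T^0$, and by Proposition~\ref{P:p21}(b) the set $\{1_\lambda \mid \lambda \in \Lmnd\}$ is a basis of pairwise orthogonal idempotents summing to $1$. The key structural fact I would isolate first is the action of the single generators on an idempotent: namely $K_a 1_\lambda = q_a^{\lambda_a} 1_\lambda$. This should follow directly from the construction of $1_\lambda = K_\lambda$ together with the defining relations, by observing that $1_\lambda$ is the image of the idempotent associated to the weight $\lambda$ under the isomorphism $\Uq^0/\Iq^0 \cong \T^0$, on which $K_a$ acts as the scalar $q_a^{\lambda_a}$. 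The inverse statement for $K_a^{-1}$ then follows since $1_\lambda$ is idempotent and $K_a K_a^{-1} = 1$ from (Q1).

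For part (a), I would first dispatch the scalar action $K_a^{\pm 1} 1_\lambda = q_a^{\pm \lambda_a} 1_\lambda$ as above, and then compute the action of the bracket symbol directly from its definition. Substituting $K_a 1_\lambda = q_a^{\lambda_a}1_\lambda$ and $K_a^{-1}1_\lambda = q_a^{-\lambda_a}1_\lambda$ into
\[
\begin{bmatrix} K_a;c \\ t \end{bmatrix} = \prod_{s=1}^{t} \frac{K_a q_a^{c-s+1} - K_a^{-1}q_a^{-c+s-1}}{q_a^s - q_a^{-s}},
\]
each factor acting on $1_\lambda$ becomes the scalar $\frac{q_a^{\lambda_a+c-s+1} - q_a^{-\lambda_a-c+s-1}}{q_a^s - q_a^{-s}}$. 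Recognizing this product as the Gaussian binomial coefficient $\begin{bmatrix} \lambda_a + c \\ t \end{bmatrix}$ evaluated at $q_a$, and using that the Gaussian binomial is invariant under $q \mapsto q^{-1}$ (hence under $q \mapsto q_a$) as noted before the proposition, yields the claimed identity. I would phrase this as a direct specialization rather than grinding through the product.

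Part (b) follows from part (a) by a straightforward induction or direct product expansion: since $K_\mu = \prod_{a} \begin{bmatrix} K_a \\ \mu_a \end{bmatrix}$ and the idempotent $1_\lambda$ absorbs each factor via part (a) (taking $c=0$), one obtains $K_\mu 1_\lambda = \bigl(\prod_a \begin{bmatrix} \lambda_a \\ \mu_a \end{bmatrix}\bigr) 1_\lambda = \lambda_\mu 1_\lambda$, using that the $1_\lambda$ are idempotent so repeated multiplication does not change them. Part (c) is then immediate by multiplying $K_\mu$ on the right by the resolution of the identity $\sum_{\lambda \in \Lmnd} 1_\lambda = 1$ from Proposition~\ref{P:p21}(b) and applying part (b) to each summand. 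The main obstacle, such as it is, will be carefully matching the quantum binomial product against the Gaussian coefficient while tracking the $q_a$ versus $q$ distinction; but the invariance of the Gaussian coefficient under $q \mapsto q^{-1}$ neutralizes this, so the argument reduces to bookkeeping parallel to \cite[Proposition~8.3]{DG}.
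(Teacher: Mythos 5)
Your proposal is correct and follows essentially the same route as the paper, which simply observes that all elements involved are purely even and invokes the argument of Doty--Giaquinto (their Proposition~8.3) with $v$ replaced by $q_a$; your write-up just fills in the details of that citation (the scalar action $K_a 1_\lambda = q_a^{\lambda_a}1_\lambda$ coming from the structure of $\T^0$ in Proposition~\ref{P:p21}, the factor-by-factor evaluation of $\begin{bmatrix} K_a;c \\ t\end{bmatrix}$ against the Gaussian binomial, and the resolution of the identity for part (c)).
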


\begin{proof}
As the elements are purely even, the argument from the proof of \cite[Proposition~8.3]{DG} carries over if we replace $v$ by $q_a$.
\end{proof}

\subsection{Commutation relations between root vectors and weight idempotents}  Recall that in Section~\ref{SS:quantumrootvectors} we defined root vectors $E_{a,b } \in \Uq$ for every $1 \leq a \neq b \leq m+n$.  As is our convention, we also write $E_{a,b}$ for their image in $\Tq$ and $\Sq$.  We now compute the commutation relations between root vectors and weight idempotents.
\begin{prop}\label{p23}
For any $\lambda \in \Lmnd$, and $\alpha=\varepsilon_b-\varepsilon_c \in \Phi$, we have the commutation formulas:
 \[E_{b,c}1_\lambda=\begin{cases}
            1_{\lambda+\alpha}E_{b,c}& \text{if $\lambda+\alpha \in \Lmnd$}\\
            0& \text{otherwise}
            \end{cases}\]
and similarly
 \[1_\lambda E_{b,c}=\begin{cases}
            E_{b,c}1_{\lambda-\alpha}& \text{if $\lambda-\alpha \in \Lmnd$}\\
            0& \text{otherwise.}
            \end{cases}\]
\end{prop}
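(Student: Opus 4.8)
The plan is to reduce the quantum statement to the arithmetic of Proposition~\ref{p22}, exactly as Proposition~\ref{p3} was reduced to Proposition~\ref{p2} in the nonquantum case. First I would fix $\alpha = \varepsilon_b - \varepsilon_c \in \Phi$ and compute the commutator of the root vector $E_{b,c}$ with each generator $K_a^{\pm 1}$ of $\Aq^0$. From relation $(Q2)$ together with the recursive definition \eqref{E:qrootvectordef} of the higher root vectors, one checks that $E_{b,c}$ is a simultaneous $q$-eigenvector for left multiplication by the $K_a$; explicitly, $K_a E_{b,c} = q^{(\varepsilon_a,\alpha)} E_{b,c} K_a = q_a^{\delta_{a,b}-\delta_{a,c}} E_{b,c} K_a$. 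This is the multiplicative analogue of the identity \eqref{eq2} that drove the nonquantum proof, and it is the one genuinely new computation needed here.

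Next I would push this eigenvector relation through the quantum binomial coefficients $\begin{bmatrix} K_a \\ t \end{bmatrix}$ that make up $1_\lambda = K_\lambda$. Commuting $E_{b,c}$ past the factor for index $a$ replaces $K_a$ by $q_a^{\delta_{a,b}-\delta_{a,c}} K_a$, which shifts the parameter $c$ inside $\begin{bmatrix} K_a;c \\ t \end{bmatrix}$ by exactly $\delta_{a,b}-\delta_{a,c}$; only the factors at $a=b$ and $a=c$ are affected. Carrying this out for the full product yields
\[
E_{b,c}\,1_\lambda = \left(\begin{bmatrix} K_b;1 \\ \lambda_b \end{bmatrix} \begin{bmatrix} K_c;-1 \\ \lambda_c \end{bmatrix} \prod_{a \neq b,c} \begin{bmatrix} K_a \\ \lambda_a \end{bmatrix}\right) E_{b,c},
\]
the precise quantum counterpart of \eqref{E:xalpha1lambda}. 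I would then apply part (a) of Proposition~\ref{p22} to evaluate each shifted binomial coefficient against the idempotent, using the Gaussian-binomial Pascal identity $\begin{bmatrix} z \\ t \end{bmatrix} = \begin{bmatrix} z-1 \\ t \end{bmatrix} + q^{\pm(\cdot)}\begin{bmatrix} z-1 \\ t-1 \end{bmatrix}$ to separate the $\lambda+\alpha \in \Lmnd$ term from the remainder.

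The case analysis then mirrors the nonquantum proof. When $\lambda_c > 0$, the shifted coefficient at index $c$ splits via the Pascal recursion into a term giving $1_{\lambda+\alpha}E_{b,c}$ and a term whose weight has total degree exceeding $d$, which vanishes by Proposition~\ref{P:p21}(b); when $\lambda_c = 0$ one instead recognizes the product as $K_\mu E_{b,c}$ with $|\mu| = d+1 > d$, killed by Proposition~\ref{P:p21}(c). The second formula, for $1_\lambda E_{b,c}$, follows from the symmetric computation using right multiplication, or equivalently by applying the first formula and the orthogonality of the idempotents. The main obstacle I anticipate is bookkeeping the exponents of $q_a$ (rather than $q$) correctly in the eigenvalue $q_a^{\delta_{a,b}-\delta_{a,c}}$ and verifying that the Gaussian binomial coefficients remain invariant under $q \mapsto q^{-1}$ so that the $q_a$ versus $q$ distinction genuinely washes out, exactly as flagged in the remark preceding Proposition~\ref{p22}; the odd roots require no extra care since $E_{b,c}^2 = 0$ never enters a first-power commutation.
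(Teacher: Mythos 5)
Your overall strategy is exactly the paper's: commute $E_{b,c}$ past each factor of $1_\lambda$ using the $q$-eigenvalue relation from $(Q2)$, collect the resulting shifted quantum binomials, and then split them with Pascal-type identities so that one summand is $1_{\lambda+\alpha}$ (up to a scalar that cancels) and the other is a $K_\mu$ with $|\mu|>d$, hence zero by Proposition~\ref{P:p21}(c). However, your central displayed formula has the two shifts interchanged, and as written the computation would go off the rails. From $K_bE_{b,c}=q_bE_{b,c}K_b$ one gets $E_{b,c}K_b=q_b^{-1}K_bE_{b,c}$, so moving $E_{b,c}$ from left to right past $\begin{bmatrix} K_a;0 \\ \lambda_a\end{bmatrix}$ shifts the parameter by $\delta_{a,c}-\delta_{a,b}$, not $\delta_{a,b}-\delta_{a,c}$: the correct identity is
\[
E_{b,c}\,1_\lambda=\begin{bmatrix} K_b;-1 \\ \lambda_b\end{bmatrix}\begin{bmatrix} K_c;1 \\ \lambda_c\end{bmatrix}\prod_{a\neq b,c}\begin{bmatrix} K_a \\ \lambda_a\end{bmatrix}E_{b,c},
\]
with $-1$ at $b$ and $+1$ at $c$ (consistent with the weight-space picture: the factor multiplying $E_{b,c}$ on the left must collapse to $1_{\lambda+\alpha}$, whose $b$-entry is $\lambda_b+1$ and whose $c$-entry is $\lambda_c-1$, and also with the nonquantum \eqref{E:xalpha1lambda}). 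With your signs the index-$c$ factor would not produce the $\begin{bmatrix}K_c\\ \lambda_c-1\end{bmatrix}$ term you need, and the case split on $\lambda_c$ would not come out.

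Two smaller points. First, once $E_{b,c}$ has been moved to the right there is no idempotent left on that side to ``evaluate against'' via Proposition~\ref{p22}(a); the paper instead multiplies by $\begin{bmatrix}K_b\\ 1\end{bmatrix}$ and uses the identity $\begin{bmatrix}K_b;0\\1\end{bmatrix}\begin{bmatrix}K_b;-1\\ \lambda_b\end{bmatrix}=\begin{bmatrix}\lambda_b+1\\1\end{bmatrix}\begin{bmatrix}K_b\\ \lambda_b+1\end{bmatrix}$ (the quantum analogue of multiplying by $H_i/(\lambda_i+1)$) to raise the $b$-index, and then splits $\begin{bmatrix}K_c;1\\ \lambda_c\end{bmatrix}=q_c^{\lambda_c}\begin{bmatrix}K_c\\ \lambda_c\end{bmatrix}+q_c^{\lambda_c-1}K_c^{-1}\begin{bmatrix}K_c\\ \lambda_c-1\end{bmatrix}$; only at the very end is Proposition~\ref{p22}(a) used, to absorb the leftover $q_c^{\lambda_c-1}K_c^{-1}$ into $1_{\lambda+\alpha}$. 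Second, the vanishing of the excess term is by Proposition~\ref{P:p21}(c), not (b). Your derivation of the second formula from the first via $\sum_\mu 1_\mu=1$ and orthogonality is fine.
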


\begin{proof}
The following identities are derived by direct computation.
\begin{equation}
\begin{bmatrix}
K_a;0  \\
1
\end{bmatrix}\begin{bmatrix}
K_a;-1  \\
\lambda_a
\end{bmatrix}=\begin{bmatrix}
\lambda_a+1 \\
1
\end{bmatrix}\begin{bmatrix}
K_a;0  \\
\lambda_a+1
\end{bmatrix},\label{eq210}
\end{equation}
\begin{equation}
\begin{bmatrix}
K_a;1 \\
\lambda_a
\end{bmatrix}=q_a^{\lambda_a}\begin{bmatrix}
K_a  \\
\lambda_a
\end{bmatrix}+q_a^{\lambda_a-1}K_a^{-1}\begin{bmatrix}
K_a \\
\lambda_a-1
\end{bmatrix}.\label{eq211}
\end{equation}

From the defining relation $(Q2)$, we can see that $K_a$ and $E_{b,c}$ commute if $a\neq b$ and $a\neq c$. Moreover,
\[K_bE_{b,c}=q_bE_{b,c}K_b.\]
This implies
\begin{equation}
E_{b,c}\begin{bmatrix}
K_b \\
\lambda_b
\end{bmatrix}=\begin{bmatrix}
K_b;-1 \\
\lambda_b
\end{bmatrix}E_{b,c}.\label{eq212}
\end{equation}
We also have:
\[K_cE_{b,c}=q_c^{-1}E_{b,c}K_c,\]
which implies
\begin{equation}
E_{b,c}\begin{bmatrix}
K_c \\
\lambda_c
\end{bmatrix}=\begin{bmatrix}
K_c;1 \\
\lambda_c
\end{bmatrix}E_{b,c}.\label{eq213}
\end{equation}
Then, for $\lambda\in \Lmnd$, and $b\neq c$, we have
\[E_{b,c}1_\lambda=\begin{bmatrix}
K_b;-1 \\
\lambda_b
\end{bmatrix}\begin{bmatrix}
K_c;1 \\
\lambda_c
\end{bmatrix}\prod_{l\neq b,c}\begin{bmatrix}
K_l \\
\lambda_l
\end{bmatrix}E_{b,c}.\]
Multiply both sides of the preceding equality by $\begin{bmatrix}
K_b \\
\lambda_b
\end{bmatrix}$ and use \eqref{eq210} to simplify the right-hand side and \eqref{eq212}, \eqref{eq213} to simplify the left-hand side. The result is:
\[E_{b,c}\begin{bmatrix}
K_b;1 \\
\lambda_b
\end{bmatrix}1_\lambda=\begin{bmatrix}
\lambda_b+1 \\
1
\end{bmatrix}\begin{bmatrix}
K_b \\
\lambda_b+1
\end{bmatrix}\begin{bmatrix}
K_c;1 \\
\lambda_c
\end{bmatrix}\prod_{l\neq b,c}\begin{bmatrix}
K_l \\
\lambda_l
\end{bmatrix}E_{b,c}.\]
Assuming $\lambda_c\geq 1$ and using \eqref{eq211}, we get
\[E_{b,c}1_\lambda=\begin{bmatrix}
\lambda_b+1 \\
1
\end{bmatrix}\begin{bmatrix}
K_b \\
\lambda_b+1
\end{bmatrix}\left(q_c^{\lambda_c}\begin{bmatrix}
K_c  \\
\lambda_c
\end{bmatrix}+q_c^{\lambda_c-1}K_c^{-1}\begin{bmatrix}
K_c \\
\lambda_c-1
\end{bmatrix}\right)\prod_{l\neq b,c}\begin{bmatrix}
K_l \\
\lambda_l
\end{bmatrix}E_{b,c}.\]
Thus, when $\lambda_c\geq 1$ we can multiply through in the above expression and apply Proposition~\ref{P:p21}$(c)$ to see that the first summand must be zero. The
above equality simplifies to
\[E_{b,c}1_\lambda=q_c^{\lambda_c-1}K_c^{-1}1_{\lambda+\alpha}E_{b,c}.\]
Now, by Proposition \ref{p22}$(a)$, $K_c^{-1}$ acts on $1_{\lambda+\alpha}$ as $q_c^{-(\lambda_c-1)}$. Thus we obtain the equality in the first part of the proposition in
the case $\lambda_c\geq 1$.

If $\lambda_c=0$, then the right-hand-side is zero by Proposition~\ref{P:p21}$(c)$. This proves the first part of the proposition. The proof of the second part is similar.
\end{proof}

\subsection{Commutation formulas between divided powers of root vectors}  We will need to know how divided powers of root vectors commute with each other.  To obtain this we use the \emph{PBW-Commutator Lemma} presented in \cite{W}. We first consider the case when both root vectors correspond to positive roots\footnote{Note that there is a typographic error in \cite[20(b)]{W} and that we have chosen to write signs in an equivalent but more symmetric fashion.}.
\begin{equation}\label{E:DeWitPositive}
E_{a,b}E_{c,d}=\begin{cases}
            (-1)^{\overline{E}_{a,b}\overline{E}_{c,d}}E_{c,d}E_{a,b}& \text{($b<c$ or $c<a<b<d$)}\\
            (-1)^{\overline{E}_{a,b} \overline{E}_{c,d}}q_bE_{c,d}E_{a,b}& \text{($a<c<b=d$)}\\
           (-1)^{\overline{E}_{a,b} \overline{E}_{c,d}}q_aE_{c,d}E_{a,b}& \text{($a=c<b<d$)}\\
            E_{a,d}+q_c^{-1}E_{c,d}E_{a,b}& \text{($b=c$)}\\
            (-1)^{\overline{E}_{a,b}\overline{E}_{c,d}}E_{c,d}E_{a,b}+(q_b-q_b^{-1})E_{a,d}E_{c,b}& \text{($a<c<b<d$)}
            \end{cases}
\end{equation}

 Before stating the result, we first observe that we can make the following assumptions.  First, since the case when both root vectors have divided power one is handled by \eqref{E:DeWitPositive}, we may assume that at least one of the powers is greater than one.  Second, if $\varepsilon_{a}-\varepsilon_{b}$ is an odd root, then by \cite[Section IV]{W} we have $E_{a,b}^{2}=0$.  That is, just as in the nonquantum case we may assume the odd root vectors have divided power at most one. Therefore, in what follows if the power of a root vector is one, then it may be even or odd; but if the power is greater than one, then we are implicitly assuming the root vector is even.  In particular, the combination of these two assumptions means that in each formula below at least one root vector is even and, hence, our formulas do not involve extra signs due to the $\Z_{2}$-grading.

Under the above assumptions lengthy but elementary inductive arguments using \eqref{E:DeWitPositive} imply the following commutator formulas for the divided powers of root vectors associated to positive roots. In these relations and the ones that follow we use the $q_{a}$ notation introduced in \eqref{E:qadef} and the Gaussian binomials introduced in \eqref{E:gaussian}.  The relations given here are analogous to those obtained by Xi for the quantum groups of simple Lie algebras \cite{X}.

\begin{prop}\label{P:commute}  Let $E_{a,b}$ and $E_{c,d}$ be two root vectors with $a<b$ and $c<d$, and let $M,N \geq 1$ satisfying the assumptions given above.  We then have the following commutation formulas.
\begin{enumerate}
\item If $b<c$ or $c<a<b<d$, then
\[
E_{a,b}^{(M)}E_{c,d}^{(N)}= E_{c,d}^{(N)}E_{a,b}^{(M)} .
\]
\item If $a=c<b<d$ or $a<c<b=d$, then
\[
E_{a,b}^{(M)}E_{c,d}^{(N)}= q_b^{MN} E_{c,d}^{(N)}E_{a,b}^{(M)} .
\]
\item If $a<b=c<d$, then
\[
E_{a,b}^{(M)}E_{c,d}^{(N)}=\sum_{t=0}^{\min(M,N)}q_{b}^{-(N-t)(M-t)}E_{c,d}^{(N-t)}E_{a,d}^{(t)}E_{a,b}^{(M-t)}.
\]
\item If $a<c<b<d$, then
\[
E_{a,b}^{(M)}E_{c,d}^{(N)}=\sum_{t=0}^{\min(M,N)}q_{b}^{\frac{t(t-1)}{2}}(q_b-q_b^{-1})^t[t]!E_{c,b}^{(t)}E_{c,d}^{(N-t)}E_{a,b}^{(M-t)}E_{a,d}^{(t)}.
\]
\end{enumerate}
\end{prop}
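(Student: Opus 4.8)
\subsection*{Proof proposal}

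The plan is to deduce all four formulas from the degree-one commutation rules \eqref{E:DeWitPositive} by a double induction on the divided-power exponents. The essential tool is the quantum analogue of \eqref{eq9}: for a single root vector one has $E_{a,b}^{(r)}E_{a,b}^{(s)}=\begin{bmatrix}r+s\\r\end{bmatrix}E_{a,b}^{(r+s)}$, with the Gaussian binomial as in \eqref{E:gaussian}; in particular $E_{a,b}E_{a,b}^{(M-1)}=[M]E_{a,b}^{(M)}$, so I can strip a single factor $E_{a,b}$ off the front of $E_{a,b}^{(M)}$, commute it through $E_{c,d}^{(N)}$ using the $M=1$ version of the identity, and then reassemble. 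Accordingly I would organize the argument in two stages: first prove each formula for $M=1$ and all $N\geq1$ by induction on $N$ (base case $N=1$ being \eqref{E:DeWitPositive}), then prove the general case by induction on $M$ using the $M=1$ results. Throughout, the standing assumption that at least one of $E_{a,b}$, $E_{c,d}$ is even makes every sign $(-1)^{\overline{E}_{a,b}\overline{E}_{c,d}}$ in \eqref{E:DeWitPositive} trivial and forces the various $q_a$, $q_b$ decorations to agree where needed, so that no $\Z_{2}$-signs survive into the final formulas.

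Cases (1) and (2) require only this double induction with no cancellation to track. In case (1) the base relation is plain commutativity $E_{a,b}E_{c,d}=E_{c,d}E_{a,b}$, which is inherited by all divided powers. In case (2) the base relation swaps the two vectors at the cost of a single power of $q_b$; writing $E_{a,b}^{(M)}E_{c,d}^{(N)}=\frac{1}{[M]![N]!}E_{a,b}^{M}E_{c,d}^{N}$ and moving the $E_{a,b}$'s past the $E_{c,d}$'s one at a time produces $MN$ such factors, giving $q_b^{MN}E_{c,d}^{(N)}E_{a,b}^{(M)}$ after repackaging.

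Cases (3) and (4) carry the genuine content. For (3) the base case is $E_{a,b}E_{b,d}=E_{a,d}+q_b^{-1}E_{b,d}E_{a,b}$; stripping one $E_{a,b}$ and commuting it through $E_{b,d}^{(N)}$ spawns an $E_{a,d}$ summand, and reassembling the divided powers of $E_{b,d}$ and $E_{a,b}$ via the single-vector identity yields exactly the terms $E_{b,d}^{(N-t)}E_{a,d}^{(t)}E_{a,b}^{(M-t)}$ with coefficient $1$ and weight $q_b^{-(N-t)(M-t)}$. One also needs that $E_{a,d}$ commutes suitably with $E_{a,b}$ and $E_{b,d}$, an instance of cases (1)--(2) proved first. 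Case (4) proceeds identically from $E_{a,b}E_{c,d}=E_{c,d}E_{a,b}+(q_b-q_b^{-1})E_{a,d}E_{c,b}$; each commutation now spawns a pair $E_{a,d}E_{c,b}$, and accumulating these across the induction produces the factors $q_b^{t(t-1)/2}$, $(q_b-q_b^{-1})^t$, and $[t]!$.

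I expect the main obstacle to lie entirely in the bookkeeping for case (4). One must maintain a fixed ordering of the four families $E_{c,b},E_{c,d},E_{a,b},E_{a,d}$ at every step --- in particular reordering the pair $E_{a,d}E_{c,b}$ into the order appearing in the statement, which again requires the cases (1)--(2) commutations among these auxiliary vectors --- and one must check that the scalar generated at each inductive step obeys exactly the recursion whose closed form is $q_b^{t(t-1)/2}(q_b-q_b^{-1})^t[t]!$. Verifying this reduces to a Pascal-type identity for the Gaussian binomials of \eqref{E:gaussian} together with careful tracking of the powers of $q_b$ and of the quantum factorial. This is the lengthy but elementary computation referred to just before the statement, and it runs parallel to Xi's argument for simple Lie algebras in \cite{X}.
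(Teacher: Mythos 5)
Your proposal is correct and takes essentially the same route as the paper, which proves these formulas precisely by ``lengthy but elementary inductive arguments'' on the divided-power exponents starting from the degree-one relations \eqref{E:DeWitPositive}, exactly the double induction you describe. Your elaboration of the bookkeeping (stripping single factors via $E_{a,b}E_{a,b}^{(M-1)}=[M]E_{a,b}^{(M)}$, using cases (1)--(2) to reorder the auxiliary root vectors in cases (3)--(4), and the Gaussian-binomial recursions producing $q_b^{t(t-1)/2}(q_b-q_b^{-1})^t[t]!$) fills in what the paper leaves implicit.
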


We note that from these commutator formulas we can derive a second set by solving for $E_{c,d}^{(N)}E_{a,b}^{(M)}$ and then interchanging $(a,b)$ and $(c,d)$.  Taken together with the formulas given in the proposition these give a complete set of commutator formulas for divided powers of positive root vectors.  That this is a complete set of formulas can easily be seen by considering the various possibilities for the subscripts (cf.\ \cite[Section 9]{DG}).

There is a similar set of commutator formulas for divided powers of negative root vectors.  They can be derived directly using the analogous results from \cite{W}.  Alternatively, $\Uq$ admits an antiautomorphism given by $E_{a} \mapsto F_{a}$, $F_{a}\mapsto E_{a}$, and $K_{a}\mapsto K_{a}^{-1}$.  Applying this map to the commutator relations for positive root vectors yields the commutator relations among negative root vectors.

\subsection{More commutation formulas}\label{SS:morecommformulas}  Finally we give the commutation formulas between a positive and a negative root vector.
 Let us assume $a<b$ and $c<d$.  Then from \cite{W} we have the following:
\begin{equation}\label{eq215}
E_{a,b}E_{d,c}=\begin{cases}
            (-1)^{\bar{E}_{a,b}\bar{E}_{d,c}}E_{d,c}E_{a,b}&  \text{($b\leq c$ or $c<a<b<d$)}\\
            (-1)^{\bar{E}_{a,b}\bar{E}_{d,c}}E_{d,c}E_{a,b}+K_{c,b}E_{a,c}&  \text{($a<c<b=d$)}\\
           (-1)^{\bar{E}_{a,b}\bar{E}_{d,c}}E_{d,c}E_{a,b}- (-1)^{\bar{E}_{a,b}\bar{E}_{d,c}}K_{a,b}E_{d,b}& \text{($a=c<b<d$)}\\
           (-1)^{\bar{E}_{a,b}\bar{E}_{d,c}}E_{d,c}E_{a,b}+(q_a-q_a^{-1})^{-1}(K_{a,b}-K_{a,b}^{-1})&  \text{($a=c$ and $b=d$)}\\
            (-1)^{\bar{E}_{a,b}\bar{E}_{d,c}}E_{d,c}E_{a,b}-(q_b-q_b^{-1})K_{c,b}E_{a,c}E_{d,b}& \text{($a<c<b<d$)}
            \end{cases}
\end{equation}

Using these and elementary inductive arguments yields the following formulas.  Note that the assumptions on divided powers of root vectors stated before Proposition~\ref{P:commute} apply here as well.

\begin{prop} Let $E_{a,b}$ and $E_{d,c}$ be two root vectors  with $a<b$ and $c<d$,  and let $M,N \geq 1$. We then have the following commutation formulas.
\begin{enumerate}
\item  If $b\leq c$ or $c<a<b<d$, then
\[
E_{a,b}^{(M)}E_{d,c}^{(N)}= E_{d,c}^{(N)}E_{a,b}^{(M)}.
\]
\item If $a<c<b=d$
\[
E_{a,b}^{(M)}E_{d,c}^{(N)}= \sum_{t=0}^{\min(M,N)} q_{b}^{-t(N-t)}E_{d,c}^{(N-t)}K_{c,d}^{t}E_{a,b}^{(M-t)}E_{a,c}^{(t)}.
\]
\item If $a=c<b<d$, then
\[
E_{a,b}^{(M)}E_{d,c}^{(N)}= \sum_{t=0}^{\min(M,N)} (-1)^tq_{b}^{-t(M-1-t)}E_{d,b}^{(t)}E_{d,c}^{(N-t)}K_{a,b}^{t}E_{a,b}^{(M-t)}.
\]
\item  If $a<b$, then
\[
E_{a,b}^{(M)}E_{b,a}^{(N)}= \sum_{t=0}^{\min(M,N)} E_{b,a}^{(N-t)}\begin{bmatrix} K_{a,b}; 2t-M-N \\ t \end{bmatrix} E_{a,b}^{(M-t)}.
\]
\item If $a<c<b<d$, then
\[
E_{a,b}^{(M)}E_{d,c}^{(N)}= \sum_{t=0}^{\min(M,N)} (-1)^tq_{b}^{\frac{-t(2N-3t-1)}{2}}(q_b-q_b^{-1})^{t}[t]!E_{d,c}^{(N-t)}E_{d,b}^{(t)}K_{c,b}^{t}E_{a,b}^{(M-t)}E_{a,c}^{(t)}.
\]

\end{enumerate}
\end{prop}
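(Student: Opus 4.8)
The plan is to prove the five commutation formulas for a positive root vector $E_{a,b}^{(M)}$ against a negative root vector $E_{d,c}^{(N)}$ by induction on $M+N$, using the base-case relations \eqref{eq215} for divided power one. As the text before the statement notes, under our standing assumptions on divided powers at least one of the two root vectors appearing in any product is even, so no extra signs arise from the $\Z_2$-grading and the computation is formally parallel to Xi's calculations \cite{X} in the purely even case. The antiautomorphism $E_a \mapsto F_a$, $F_a \mapsto E_a$, $K_a \mapsto K_a^{-1}$ used just before this statement should be exploited wherever possible to cut the casework roughly in half; for instance, cases (2) and (3) are related by this symmetry, as are the two halves of case (1).

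First I would establish the commuting case (1): when $b \le c$ or $c < a < b < d$ the corresponding divided-power identity follows immediately from the first line of \eqref{eq215} by a trivial induction, since the generators simply commute past one another. Next I would treat the single-step cases (2) and (3), where $E_{a,b}$ and $E_{d,c}$ share exactly one index and one new root vector $E_{a,c}$ or $E_{d,b}$ is produced together with a factor of $K_{c,d}$ or $K_{a,b}$. Here the induction on $N$ (resp.\ $M$) requires repeatedly applying the second (resp.\ third) line of \eqref{eq215}, and then using the quantum-commutation relations of Proposition~\ref{P:commute} together with Proposition~\ref{p22}$(a)$ to move the $K$-factors past the divided powers, absorbing the resulting powers of $q_b$ into the stated exponents $-t(N-t)$ and $-t(M-1-t)$; the coefficients $\binom{N}{t}_{q}$-type factors are reorganized using \eqref{eq9} and the Gaussian-binomial identities. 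I would then handle the diagonal case (4), $E_{a,b}^{(M)}E_{b,a}^{(N)}$, which is the quantum analogue of the $\mathfrak{sl}_2$ commutation formula: the fourth line of \eqref{eq215} produces the bracket $(q_a-q_a^{-1})^{-1}(K_{a,b}-K_{a,b}^{-1})$, and the induction collapses the resulting telescoping sum into the single Gaussian-binomial bracket $\begin{bmatrix} K_{a,b};\,2t-M-N \\ t \end{bmatrix}$, exactly as in the $\mathfrak{gl}(n)$ computation of \cite{DG}. Finally, case (5), the generic interval case $a<c<b<d$, is the most involved: the last line of \eqref{eq215} generates a product $E_{a,c}E_{d,b}$ along with the coefficient $-(q_b-q_b^{-1})K_{c,b}$, and iterating this over the divided powers produces the sum with coefficients $(-1)^t q_b^{-t(2N-3t-1)/2}(q_b-q_b^{-1})^t[t]!$.

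The hard part will be case (5), and to a lesser extent case (4). In case (5) the inductive step must simultaneously track four different root vectors $E_{d,c}, E_{d,b}, E_{a,b}, E_{a,c}$ together with the $K_{c,b}$ factor, and verifying that the powers of $q_b$, the sign $(-1)^t$, and the factor $(q_b-q_b^{-1})^t[t]!$ all combine correctly demands careful bookkeeping: at each step one must commute the newly created $E_{a,c}$ and $E_{d,b}$ past the existing factors using parts (1)--(4) of Proposition~\ref{P:commute} and its negative-root analogue, and then re-collect divided powers via \eqref{eq9}. The quadratic exponent $\tfrac{-t(2N-3t-1)}{2}$ is exactly the kind of expression that is delicate to get right, since an off-by-one in the inductive hypothesis propagates into a visibly wrong power of $q_b$. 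My expectation, following the guidance in the excerpt that ``lengthy but elementary inductive arguments'' suffice and that these relations are ``analogous to those obtained by Xi \cite{X}'', is that no genuinely new idea is needed beyond the base relations \eqref{eq215}, Proposition~\ref{P:commute}, and Proposition~\ref{p22}; the entire difficulty is the combinatorial control of the $q$-exponents, which I would verify by a careful induction and cross-check against the known even case and the antiautomorphism symmetry.
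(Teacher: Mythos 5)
Your proposal matches the paper's approach: the paper offers no written-out proof here, asserting only that the formulas follow from the single-power relations \eqref{eq215} by ``lengthy but elementary inductive arguments,'' which is exactly the induction on divided powers you outline, and your use of the antiautomorphism to halve the casework mirrors the remarks surrounding the statement. One small correction: to move the $K$-factors past divided powers of root vectors you want relation $(Q2)$ (equivalently \eqref{eq212}--\eqref{eq213}) rather than Proposition~\ref{p22}$(a)$, which concerns the action on weight idempotents; this does not affect the substance of your argument.
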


We can use the antiautomorphism on $\Uq$ defined in the previous section along with simple calculations to derive additional identities (cf.\ \cite[Section 9]{DG}).  In this way we obtain a complete set of commutation relations involving a positive root vector to the left of a negative root vector.
There are similar commutation formulas for the case of a negative root vector followed by a positive root vector. These can be obtained from the above formulas by solving
for the term $E_{d,c}^{(N)}E_{a,b}^{(M)}$. The new formulas will be of a similar form.

Taking all possible formulas we obtain the commutation formulas for divided powers of root vectors. The interested reader can derive the complete set.
\subsection{An $\AAA$-form for $\Uq$}\label{SS:LusztigAform}  Recall that Lusztig defined an $\AAA=\Z[q,q^{-1}]$-form for $U_{q}(\fg )$ whenever $\fg$ is a semisimple Lie algebra.  We define an analogous $\AAA$-form for $\Uq$.  Let $\Ures$ denote the $\AAA$-subsuperalgebra of $\Uq$ generated by
\begin{equation*}
\left\{E_{a,b}^{(M)}, K_{a}^{\pm 1}, \begin{bmatrix} K_{a} \\ t \end{bmatrix} \mid 1 \leq a \neq b \leq m+n, M,t \in \Z_{\geq 0}\right\}.
\end{equation*}

 Fix an order on the root system $\Phi^+$ and let $\Pmn$ be as in \eqref{E:Pmndef}. For $A=(A(\alpha)) \in \Pmn$, we define
\begin{align*}
E_A&=\prod_{\alpha=\varepsilon_a-\varepsilon_b \in \Phi^+}E_{a,b}^{\left( A(\alpha)\right)},\\
 F_A&=\prod_{\alpha=\varepsilon_a-\varepsilon_b \in
 \Phi^+}E_{b,a}^{\left( A(\alpha)\right)},
\end{align*} where the product is taken according to the fixed order on $\Phi^{+}$.

There is a known basis for the analogously defined $\AAA$-form for $\Uq_{q}(\fg_{\0})$ following from Lusztig's basis for $\Uq_{q}(\mathfrak{sl}(n))$ \cite[Theorem 4.5]{Lusztig} (see also \cite{X}).   Using this basis and the quantum commutator formulas given in the previous section it follows that $\Ures$ has an $\AAA$-basis given by the set
\begin{equation}\label{E:quantumPBW}
\left\{ E_{A} \prod_{a=1}^{m+n}\left( K_{a}^{\sigma_{a}} \begin{bmatrix}  K_{a} \\ \mu_{a} \end{bmatrix}\right)F_{C} \mid A, C \in \Pmn, \sigma_{1}, \dotsc , \sigma_{m+n} \in \{0,1 \}, \mu  \in \Lmn \right\}.
\end{equation}  In particular this gives a basis for $\Uq$ after extending scalars (compare with \cite[Proposition~1]{Z}).

If $\Aq$ is $\Sq$ or $\Tq$, then we define $\Ares$ to be the image of $\Ures$ under the quotient map.  In particular $\Ares$ is a $\Z[q,q^{-1}]$-subsuperalgebra of $\Aq$ and (the image under the quotient map of) the set given in \eqref{E:quantumPBW} spans $\Ares$. For short we call $\Sqmnd_{\AAA}$ the \emph{integral $q$-Schur superalgebra}.

\subsection{Quantum Kostant monomials and content functions}  We now define the quantum analogue of the Kostant monomials.  Any finite product of nonzero elements of the form
 \[
E_{a,b}^{(M)}, \quad K_a^{\pm1}, \quad \begin{bmatrix}
K_a  \\
t
\end{bmatrix},
\] where $1\leq a\neq b\leq m+n$ and $M,t\in\Z_{\geq 0}$, will be called a \emph{Kostant monomial}.

We also define content functions as before.
  Namely, the content function
\begin{equation}\label{E:qcontentdef}
\chi: \left\{ \text{Kostant monomials}\right\} \rightarrow \bigoplus_{i=1}^{m+n}\Z\varepsilon_i
\end{equation}
is given on generators by declaring for
  $\alpha=\varepsilon_a-\varepsilon_b \in \Phi$, $M, N \in \N$, and $t\in \Z_{\geq0}$ that

\begin{align*}
\chi\left( E_{a,b}^{(M)}\right)&=M\varepsilon_{\max(a,b)}, \\
 \chi\left( K_a\right) &= \chi\left( K_a^{-1}\right)=\chi\left( \begin{bmatrix}
K_a\\
t
\end{bmatrix}\right)=0.
\end{align*}
For general monomials we again use the formula $\chi(XY)=\chi(X)+\chi(Y)$ whenever $X, Y$ are Kostant monomials.

We also define the left content, $\chi_L$, and right content, $\chi_R$, by declaring on generators that
\begin{align*}
\chi_L(E_{a,b}^{(M)})&=M\varepsilon_a, \\
\chi_L(K_a)&=\chi_L(K_a^{-1})=\chi_L\left( \begin{bmatrix}
K_a  \\
t
\end{bmatrix}\right) = 0 \\
\chi_R(E_{a,b}^{(M)})&=M\varepsilon_b, \\
\chi_R(K_a)&=\chi_R(K_a^{-1})=\chi_R\left( \begin{bmatrix}
K_a  \\
t
\end{bmatrix}\right)=0,
\end{align*}
and again using the rule $\chi_L(XY)=\chi_L(X)+\chi_R(Y)$ (similarly for $\chi_R$) whenever $X$ and $Y$ are Kostant monomials.  We again use \eqref{E:contenttoweights} to view outputs of the content functions as elements of $\Lmn$.

\subsection{A basis for the $q$-Schur superalgebra}\label{SS:integralquantum}  We can now state the quantum analogue of Theorem~\ref{T:t1b}.

\begin{thm}\label{T:integralquantum}  The integral $q$-Schur superalgebra is the $\AAA$-subalgebra of $\Sqmnd$ generated by
\[
E_{a}^{(M)}, F_{a}^{(M)}, \begin{bmatrix} K_{b} \\ t,
\end{bmatrix}
\] where $1 \leq a \leq m+n-1$, $1 \leq b \leq m+n$, and $M \in  \Z_{\geq 0}$.  Moreover, the set
\[
\Yq=\bigcup_{\lambda\in\Lmnd}\left\{ E_A1_\lambda F_C\mid A,C \in \Pmn, \chi(E_AF_C)\preceq \lambda\right\}
\]  forms a $\Q (q)$-basis of $\Sqmnd$ and an $\AAA$-basis of $\Sqmnd_{\AAA}$.
\end{thm}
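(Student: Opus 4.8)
The plan is to follow the two-step strategy that established the nonquantum Theorem~\ref{T:t1b}. First I would prove that $\Yq$ spans the integral $q$-Schur superalgebra $\Sqmnd_{\AAA}$ (the quantum analogue of Proposition~\ref{p4}), and then that $|\Yq|$ equals $\dim_{\Q(q)}\Sqmnd$ (the quantum analogue of Lemma~\ref{l4}). Since the surjection $\rho_{d}\colon \Tq \to \Sq$ has already been shown to factor through the defining relations, a spanning set whose cardinality matches the dimension is forced to be a basis, and both assertions of the theorem follow at once.

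For the spanning step I would argue exactly as in Proposition~\ref{p4}. The quantum Kostant monomials span $\Ures$, hence span $\Tres$ and $\Sqmnd_{\AAA}$ after applying the quotient map. Using Proposition~\ref{P:p21} I would replace the $\Aq^{0}$-part of each such monomial by weight idempotents, reducing to Kostant monomials that are a product of divided powers of root vectors times a single $1_{\lambda}$; Proposition~\ref{p23} then moves every weight idempotent to the right. It remains to straighten an arbitrary product of divided powers of root vectors into the ordered monomials $E_A F_C$, which I would do by inducting on content and degree using the commutation formulas of Proposition~\ref{P:commute} and the companion results of Section~\ref{SS:morecommformulas}. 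Attaching $1_{\lambda}$ on the right and invoking the quantum analogue of Lemma~\ref{l3} (valid verbatim, since the content functions are defined exactly as in \cite{DG}) together with Proposition~\ref{P:p21}(c) shows that every term violating $\chi(E_A F_C)\preceq\lambda$ vanishes, leaving an $\AAA$-combination of elements of $\Yq$. The generation claim is part of this same analysis: the recursion \eqref{E:qrootvectordef} and the integral PBW basis \eqref{E:quantumPBW} express each $E_{a,b}^{(M)}$ through the simple divided powers, while Propositions~\ref{P:p21}~and~\ref{p22} write each $K_a^{\pm1}$ as an $\AAA$-combination of the $\begin{bmatrix} K_a \\ t \end{bmatrix}$, so the generators $K_a^{\pm1}$ are redundant.

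For the counting step, $\Yq$ is indexed by precisely the same data as the nonquantum set $Y$, namely triples $(A,\lambda,C)$ with $A,C\in\Pmn$, $\lambda\in\Lmnd$, and $\chi(E_AF_C)\preceq\lambda$; hence $|\Yq|=|Y|$. By Lemma~\ref{l4} this number equals $\dim_{\Q}S(m|n,d)$, and since $\dim_{\Q(q)}\Sqmnd = \dim_{\Q}S(m|n,d)$ (the defining combinatorics being unchanged under quantization, cf.\ \cite{Moon,M,DR}), the same bijection with the set $P$ gives $|\Yq|=\dim_{\Q(q)}\Sqmnd$. Combined with the spanning statement, $\Yq$ is therefore a $\Q(q)$-basis of $\Sqmnd$; and as $\Yq\subseteq\Sqmnd_{\AAA}$ spans over $\AAA$ while being linearly independent over $\Q(q)\supseteq\AAA$, it is simultaneously an $\AAA$-basis of $\Sqmnd_{\AAA}$.

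The hard part will be the straightening inside the spanning step. In contrast with the classical case, the quantum commutation formulas introduce extra powers of $q_{a}$, extra $K$-factors, and, because of the super structure, extra signs, so one must check both that all coefficients produced stay in $\AAA=\Z[q,q^{-1}]$ and that the induction still terminates under the content ordering. What makes this manageable is that Proposition~\ref{P:commute} and the formulas of Section~\ref{SS:morecommformulas} have the same shape as their classical analogues, with lower-order terms measured by the identically-defined content function; once these formulas are in hand, the inductive bookkeeping of \cite{DG} transfers essentially unchanged, and the genuinely new labor has already been spent in deriving the commutation formulas themselves.
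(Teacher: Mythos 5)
Your proposal is correct and follows essentially the same route as the paper: the paper proves the theorem via Proposition~\ref{p24} (that $\Yq$ spans, by the same induction on degree and content used in Proposition~\ref{p4}, noting the coefficients remain in $\Z[q,q^{-1}]$) and Lemma~\ref{l24} (that $|\Yq|=\dim_{\Q(q)}\Sqmnd$, citing \cite{M} and \cite{DR} for the equality $\dim_{\Q(q)}\Sqmnd=\dim_{\Q}S(m|n,d)$ and then reusing the bijection from Lemma~\ref{l4}), exactly as you outline. Your additional remarks on the redundancy of the $K_a^{\pm1}$ generators and on where the genuine labor lies (the integrality and termination of the straightening) are consistent with, and slightly more explicit than, the paper's own brief treatment.
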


We remark that, as in Section~\ref{SS:integralformbasis}, the set $\Yq$ has alternate descriptions using the left and right content functions.  Applying the antiautomorphism of $\Uq$ yields a similar basis in which the positions of the $E$ and $F$ terms are swapped; that is, the analogue of $\Yq_{-}$ in \cite{DG}.

 \begin{prop}\label{p24}
 The set $\Yq$ spans the superalgebra $\T$.
 \end{prop}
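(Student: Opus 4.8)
The plan is to follow exactly the strategy used in the nonquantum case (Proposition~\ref{p4}), adapting each step to the quantum setting using the ingredients assembled in the preceding sections. The overall goal is to show that an arbitrary element of $\Tq$ can be written as an $\AAA$-linear combination of elements of $\Yq$. First I would invoke the fact, recorded in Section~\ref{SS:LusztigAform}, that the set \eqref{E:quantumPBW} spans $\Ares$, together with the observation that the Kostant monomials span $\Tq$; by Proposition~\ref{P:p21} we may further assume that each spanning Kostant monomial consists only of divided powers of root vectors and the weight-idempotent factors $\begin{bmatrix} K_a \\ t \end{bmatrix}$, since the $K_a^{\pm 1}$ terms can be absorbed using Proposition~\ref{p22}(a).

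Next I would use Proposition~\ref{p23} to move every weight-idempotent factor to the right end of a given monomial, at the cost of shifting the idempotent index by a root. This reduces the problem to showing that any product of divided powers of root vectors alone can be expressed as an $\AAA$-combination of terms of the form $E_A F_C$ (with the trailing idempotent reinstated at the end). The engine for this reduction is the full collection of commutation formulas assembled in Proposition~\ref{P:commute} and the subsequent propositions of Section~\ref{SS:morecommformulas}: these let me reorder the root-vector factors into the canonical ``$E$'s before $F$'s, each group ordered by the fixed order on $\Phi^+$'' shape that defines $E_A$ and $F_C$. As in \cite[Section~9]{DG}, the argument proceeds by a double induction on the degree and the content $\chi$ of the monomial: each application of a commutation formula either produces a term already in the desired ordered form or a term of strictly smaller complexity in the induction ordering, so the process terminates with an $\AAA$-combination of ordered monomials.

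Two points require care beyond the purely formal transcription of \cite{DG}. First, I must check that the coefficients appearing in the commutation formulas of Proposition~\ref{P:commute} and of Section~\ref{SS:morecommformulas} — which involve powers of $q_a$, the factors $(q_b - q_b^{-1})^t[t]!$, and the bracket symbols $\begin{bmatrix} K_{a,b};c \\ t \end{bmatrix}$ — all lie in $\Ares$, so that no denominators are introduced. For the bracket terms this is the quantum analogue of the integrality remark at the end of the proof of Proposition~\ref{p4}, and it follows from the standard identities expressing $\begin{bmatrix} K_{a,b};c \\ t \end{bmatrix}$ in terms of the generating brackets $\begin{bmatrix} K_a \\ t \end{bmatrix}$ together with the spanning property of \eqref{E:quantumPBW}. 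Second, the extra signs $(-1)^{\overline{E}_{a,b}\overline{E}_{c,d}}$ arising from the $\Z_2$-grading must be tracked, but as noted before Proposition~\ref{P:commute} the reductions involving odd root vectors only ever occur at divided power one, so at least one root vector in each relevant relation is even and these signs cause no obstruction to integrality.

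The main obstacle I expect is purely bookkeeping: verifying that the induction on degree and content genuinely terminates given the particular mix of commutation formulas available here, and confirming that the output monomials all satisfy the content constraint $\chi(E_A F_C) \preceq \lambda$ so that they actually lie in $\Yq$ rather than merely in an ordered spanning set. This last point is precisely where the quantum analogue of Lemma~\ref{l3} is needed to translate between the content condition on $E_A F_C$ and the weight condition on the trailing idempotent $1_\lambda$; since the content functions were defined identically to the classical case, I expect that lemma (or its evident quantum restatement) to apply verbatim, and the termination argument of \cite[Proposition~5.2]{DG} to carry over without essential change once the integrality of coefficients has been confirmed.
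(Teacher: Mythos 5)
Your proposal is correct and follows essentially the same route as the paper: the paper's proof simply declares the argument to be the quantum analogue of Proposition~\ref{p4} and of \cite[Proposition 9.1]{DG}, namely an induction on degree and content using the commutation formulas of Proposition~\ref{P:commute} and Section~\ref{SS:morecommformulas}, with the only point of care being that the modified coefficients still lie in $\Z[q,q^{-1}]$. Your additional remarks on absorbing the $K_a^{\pm 1}$ factors, moving idempotents to the right via Proposition~\ref{p23}, and tracking the signs from odd root vectors are exactly the details the paper leaves implicit.
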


  \begin{proof} The proof is exactly analogous to the proof of Proposition~\ref{p4} and the proof of \cite[Proposition 9.1]{DG}. One again argues by induction on degree and content using the above commutation formulas to write an arbitrary Kostant monomial as a $\Z[q,q^{-1}]$-linear combination of elements of $\Yq$.  The coefficients in our commutation formulas are slightly different, but they are still elements of $\Z[q,q^{-1}]$ and so this does not affect the substance of the argument.
 \end{proof}

  \begin{lem}\label{l24}
 The cardinality of the set $\Yq$ is equal to the dimension of $\s=\Sqmnd$.
 \end{lem}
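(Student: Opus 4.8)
The plan is to establish a bijection between $\Yq$ and a combinatorially tractable indexing set whose cardinality is already known to equal $\dim \s$, exactly mirroring the nonquantum argument in Lemma~\ref{l4}. By \cite[Section 2.3]{D} (or its quantum analogue), the dimension of the $q$-Schur superalgebra is the same as in the nonquantum case, namely the number of monomials of total degree $d$ in the free supercommutative superalgebra in $m^2+n^2$ even and $2mn$ odd variables. As observed in the proof of Lemma~\ref{l4}, this count equals the cardinality of the set
\[
P=\left\{ E_A K_B F_C \mid B=(B_a)\in\Lmn,\; B_1=0,\; A,C\in\Pmn,\; |A|+|B|+|C|\leq d\right\},
\]
where $K_B$ plays the role of $H_B$. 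So it suffices to produce a bijection $P \to \Yq$.

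First I would verify that $\dim \s$ agrees with the nonquantum dimension. This should follow since the $q$-Schur superalgebra is a free $\AAA$-module whose rank is preserved under specialization $q\mapsto 1$, recovering $S(m|n,d)$; alternatively one invokes the quantum Schur--Weyl duality of \cite{Moon, M} together with the dimension count of Du--Rui \cite{DR}. Then I would transport the bijection from Lemma~\ref{l4} verbatim: define $P\to\Yq$ by
\[
E_A K_B F_C \mapsto E_A 1_\lambda F_C,
\]
where $\lambda = (d-|A|-|B|-|C|)\varepsilon_1 + B + \chi(E_A F_C)$, with inverse $E_A 1_\lambda F_C \mapsto E_A K_B F_C$ for $B = \lambda - \chi(E_A F_C) - \lambda_1\varepsilon_1$. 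The content function $\chi$ here is the quantum one from \eqref{E:qcontentdef}, but it is defined by the identical formula $\chi(E_{a,b}^{(M)})=M\varepsilon_{\max(a,b)}$, so the combinatorial bookkeeping is unchanged.

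The two things requiring care are that the map genuinely lands in $\Yq$ and that it is well defined and bijective. For the former, I must check that $\lambda \in \Lmnd$ (i.e.\ $|\lambda|=d$ and all entries are nonnegative) and that the constraint $\chi(E_A F_C)\preceq\lambda$ defining membership in $\Yq$ holds; both reduce to the same nonnegativity inequalities verified in Lemma~\ref{l4}, since $|\chi(E_A F_C)| = |A|+|C|$ and $B_1=0$ forces the first coordinate of $\lambda$ to absorb the slack $d-|A|-|B|-|C|\geq 0$. For bijectivity, the two displayed assignments are manifestly inverse to each other once one confirms that the target of the inverse map satisfies $B_1=0$ and $|A|+|B|+|C|\leq d$, again identical to the nonquantum computation.

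The main obstacle, and the only genuinely quantum input, is confirming that $\dim \s$ equals the nonquantum dimension so that $|P|$ counts it correctly; once that is in hand the proof is purely combinatorial and identical to Lemma~\ref{l4}. I expect the specialization argument (freeness of $\Sqmnd_{\AAA}$ as an $\AAA$-module plus base change to $q=1$) to be the cleanest route, and indeed the spanning result Proposition~\ref{p24} together with this lemma is precisely what forces $\Yq$ to be an $\AAA$-basis, establishing the freeness a posteriori.
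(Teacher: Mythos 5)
Your proposal follows the paper's proof essentially exactly: establish that $\dim_{\Q(q)}\Sqmnd$ equals $\dim_{\Q}S(m|n,d)$, then transport the bijection $P\to\Yq$ from Lemma~\ref{l4} verbatim using the quantum content function. The paper handles the dimension equality purely by citation (the proof of \cite[Proposition 4.3]{M}, or \cite[Theorem 9.7]{DR}), which is one of the two routes you offer. One caution on your preferred route: the specialization argument presupposes that $\Sqmnd_{\AAA}$ is a free $\AAA$-module of the correct rank, but, as you yourself note, that freeness is only established \emph{a posteriori} by combining Proposition~\ref{p24} with this very lemma, so using it here would be circular; the citation route is the one that actually closes the argument.
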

 \begin{proof} It is known that the dimension of $\Sqmnd$ over $\Q(q)$ equals the dimension of $S(m|n,d)$ over $\Q$.  This is established, for example, in the proof of \cite[Proposition 4.3]{M}.  This can also be seen as an outcome of \cite[Theorem 9.7]{DR}. The result then follows by the proof of Lemma~\ref{l4}.
\end{proof}

Theorems~\ref{T:t2}~and~\ref{T:integralquantum} now follow as in the nonquantum case.

\subsection{A weight idempotent presentation}  We also have a quantum analogue of Theorem~\ref{T:t1c} which gives the $q$-Schur superalgebra by generators and relations using the weight idempotents.

\begin{thm}\label{T:quantumidempotent}  The $q$-Schur superalgebra $S_{q}(m|n,d)$ is generated by the homogeneous elements
\[
E_{1}, \dotsc , E_{m+n-1}, F_{1}, \dotsc , F_{m+n-1}, 1_{\lambda},
\] where $\lambda$ runs over the set $\Lmnd$.  The $\Z_{2}$-grading is given by setting $\overline{E}_{m}=\overline{F}_{m}=\1$, $\overline{E}_{a}= \overline{F}_{a}=\0$ for $a \neq m$,  and $\overline{1}_{\lambda}=\0$ for all $\lambda \in \Lmnd$.

These generators are subject only to the relations:
 \begin{enumerate}
 \item [(Q1$'$)]$1_{\lambda}1_{\mu}= \delta_{\lambda,\mu}1_{\lambda}$, $\sum_{\lambda \in \Lmnd} 1_{\lambda}=1$\\
\item [(Q2$'$)]$E_{a}1_{\lambda}=  \begin{cases} 1_{\lambda + \alpha_{a}}E_{a}, &\text{if $\lambda + \alpha_{a} \in \Lmnd$}; \\
0, &\text{otherwise}.
\end{cases}$\\
\item [(Q2$''$)]$F_{a}1_{\lambda}=  \begin{cases} 1_{\lambda - \alpha_{a}}F_{a}, &\text{if $\lambda - \alpha_{a} \in \Lmnd$}; \\
0, &\text{otherwise}.
\end{cases}$\\
\item [(Q2$'''$)]$1_{\lambda}E_{a}=  \begin{cases} E_{a}1_{\lambda - \alpha_{a}}, &\text{if $\lambda - \alpha_{a} \in \Lmnd$}; \\
0, &\text{otherwise}.
\end{cases}$\\
\item [(Q2$''''$)]$1_{\lambda}F_{a}=  \begin{cases} F_{a}1_{\lambda + \alpha_{a}}, &\text{if $\lambda + \alpha_{a} \in \Lmnd$}; \\
0, &\text{otherwise}.
\end{cases}$
\item [(Q3$'$)] $[E_{a},F_{b}]= \delta_{a,b} \sum_{\lambda \in \Lmnd}\left(\lambda_{b}-(-1)^{\overline{E}_{a} \overline{F}_{b}}\lambda_{b+1} \right)1_{\lambda}$.
 \end{enumerate} And relations $(Q4)$ and $(Q5)$ given in Theorem~\ref{T:t2}.
\end{thm}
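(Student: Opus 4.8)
The plan is to follow the proof of the nonquantum Theorem~\ref{T:t1c}, and hence \cite[Theorem~2.4]{DG}, by producing a pair of mutually inverse superalgebra homomorphisms between the algebra presented here and $\Sqmnd$. Write $\mathbf{T}'$ for the superalgebra on the generators $E_a$, $F_a$, $1_\lambda$ subject to relations (Q1$'$)--(Q3$'$), (Q4) and (Q5). Since Theorems~\ref{T:t2} and~\ref{T:integralquantum} are in hand, I may present $\Sqmnd$ by the generators $E_a$, $F_a$, $K_a^{\pm1}$ and relations (Q1)--(Q7), with the weight idempotents realized as $1_\lambda=\prod_a \begin{bmatrix} K_a \\ \lambda_a \end{bmatrix}$. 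The guiding observation, exactly as in \cite{DG}, is that the two presentations share all relations not involving the $K_a$---the Serre relations (Q4),(Q5) and the non-adjacent commutations of the $E$'s and $F$'s---and differ only in their Cartan part, where the $K$-relations are traded for the idempotent relations (Q1$'$)--(Q3$'$).

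First I would define $\phi\colon\mathbf{T}'\to\Sqmnd$ by sending each generator to the element of the same name; here the $1_\lambda$ map to the weight idempotents above. Well-definedness requires only that the idempotent relations of $\mathbf{T}'$ hold in $\Sqmnd$, the shared relations being automatic. Now (Q1$'$) is Proposition~\ref{P:p21}(b); relations (Q2$'$)--(Q2$''''$) are Proposition~\ref{p23} specialized to the simple roots $\alpha_a$, for which $E_{a,a+1}=E_a$ and $E_{a+1,a}=F_a$; and (Q3$'$) comes from (Q3) by expanding $K_aK_{a+1}^{-1}$ and $K_a^{-1}K_{a+1}$ as $\sum_\lambda q_a^{\lambda_a}q_{a+1}^{-\lambda_{a+1}}1_\lambda$ and $\sum_\lambda q_a^{-\lambda_a}q_{a+1}^{\lambda_{a+1}}1_\lambda$ via Proposition~\ref{p22}(a), substituting into the fraction in (Q3), and reading off the coefficient of each $1_\lambda$ using the orthogonality of the idempotents. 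As the $E_a$, $F_a$ and the $1_\lambda$ generate $\Sqmnd$, the map $\phi$ is surjective.

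Next I would construct $\psi\colon\Sqmnd\to\mathbf{T}'$ from the presentation of Theorem~\ref{T:t2}, setting $E_a\mapsto E_a$, $F_a\mapsto F_a$ and, guided by Proposition~\ref{p22}(a),
\[
K_a^{\pm1}\longmapsto \sum_{\lambda\in\Lmnd} q_a^{\pm\lambda_a}\,1_\lambda .
\]
Here only the $K$-relations need verification in $\mathbf{T}'$, the shared relations again being automatic. Relation (Q1) follows from (Q1$'$) together with $\sum_\lambda q_a^{\lambda_a}q_a^{-\lambda_a}1_\lambda=\sum_\lambda 1_\lambda=1$; relation (Q2) reduces to (Q2$'$)--(Q2$''''$) applied to each $1_\lambda$; and the $[E_a,F_b]$ relation in (Q3) is the computation of the previous paragraph read in reverse, yielding precisely (Q3$'$). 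For (Q6) the product telescopes to $\sum_\lambda q^{|\lambda|}1_\lambda=q^d\sum_\lambda 1_\lambda=q^d$, using $q_a=q$ for $a\le m$ and $q_a=q^{-1}$ for $a>m$ together with the inverses on the odd indices. For (Q7) the image is $\sum_\lambda\prod_{j=0}^{d}\bigl(q_a^{\lambda_a}-q_a^{\,j}\bigr)1_\lambda$, which vanishes because $0\le\lambda_a\le d$ makes the factor $j=\lambda_a$ zero for every $\lambda$.

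Finally I would verify $\phi$ and $\psi$ are mutually inverse on generators. One has $\phi\psi(K_a)=\sum_\lambda q_a^{\lambda_a}1_\lambda=K_a$ in $\Sqmnd$, by Proposition~\ref{p22}(a) and $\sum_\lambda 1_\lambda=1$, and $\phi\psi$ fixes $E_a,F_a$; while $\psi\phi$ fixes $E_a,F_a$ and satisfies $\psi\phi(1_\lambda)=1_\lambda$, the last checked by testing $\psi(1_\lambda)$ against each $1_\mu$ using $\psi\bigl(\begin{bmatrix} K_a \\ t \end{bmatrix}\bigr)1_\mu = \begin{bmatrix} \mu_a \\ t \end{bmatrix}1_\mu$ and the fact that $|\lambda|=|\mu|=d$ forces $\prod_a\begin{bmatrix} \mu_a \\ \lambda_a \end{bmatrix}=\delta_{\lambda,\mu}$. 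Hence $\mathbf{T}'\cong\Sqmnd$. I expect the main difficulty to be bookkeeping rather than structural: one must track the $\Z_2$-signs through (Q2$'$)--(Q3$'$) and respect the subtlety---already flagged in Lemma~\ref{L:Q6Q7relations} and Proposition~\ref{P:p21}---that $q_a=q^{-1}$ for $a>m$, which is exactly what makes (Q6) evaluate to $q^d$ and reconciles the sign $(-1)^{\overline{E}_a\overline{F}_b}$ in (Q3$'$) with the fraction in (Q3). Once these are handled the argument is formal and parallels \cite{DG} verbatim.
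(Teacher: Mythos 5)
Your overall strategy is exactly the one the paper intends: its entire proof of Theorem~\ref{T:quantumidempotent} is the single remark that it ``is proven just as in the nonquantum case and as in the proof of [DG, Theorem 3.4],'' and the pair of mutually inverse homomorphisms $\phi,\psi$ you build, with $K_a^{\pm1}\mapsto\sum_{\lambda}q_a^{\pm\lambda_a}1_\lambda$, is precisely that argument. Your verifications of (Q1), (Q2), (Q6), (Q7) under $\psi$, of (Q1$'$) and (Q2$'$)--(Q2$''''$) under $\phi$, and the check that $\psi(1_\lambda)=1_\lambda$ via the Gaussian binomials are all correct.

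There is, however, one step that fails as written: the claim that expanding $\delta_{a,b}(q_a-q_a^{-1})^{-1}(K_aK_{a+1}^{-1}-K_a^{-1}K_{a+1})$ over the idempotents ``yields precisely (Q3$'$).'' By Proposition~\ref{p22}(a) the coefficient of $1_\lambda$ is
\[
\frac{q_a^{\lambda_a}q_{a+1}^{-\lambda_{a+1}}-q_a^{-\lambda_a}q_{a+1}^{\lambda_{a+1}}}{q_a-q_a^{-1}}
=\frac{q_a^{n_\lambda}-q_a^{-n_\lambda}}{q_a-q_a^{-1}}=[n_\lambda],
\qquad n_\lambda=\lambda_a-(-1)^{\overline{E}_a\overline{F}_a}\lambda_{a+1},
\]
using $q_{a+1}=q_a$ for $a\neq m$ and $q_{m+1}=q_m^{-1}$ for $a=m$; that is, the \emph{quantum} integer $[n_\lambda]$, not the ordinary integer $n_\lambda$ that appears in (Q3$'$) as printed. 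With (Q3$'$) read literally, your verification that $\psi$ respects (Q3) breaks down: in $\mathbf{T}'$ relation (Q3$'$) gives $[E_a,F_a]=\sum_\lambda n_\lambda 1_\lambda$, whereas $(q_a-q_a^{-1})^{-1}\left(\psi(K_a)\psi(K_{a+1})^{-1}-\psi(K_a)^{-1}\psi(K_{a+1})\right)=\sum_\lambda [n_\lambda]1_\lambda$, and these differ whenever some $|n_\lambda|\geq 2$ (once the $1_\lambda$ are known to be linearly independent, equality would force $n_\lambda=[n_\lambda]$ for all $\lambda$, which is false). The resolution is that the coefficient in (Q3$'$) must be the Gaussian integer $[\lambda_b-(-1)^{\overline{E}_a\overline{F}_b}\lambda_{b+1}]$, exactly as in [DG, Theorem 3.4]; the statement here has evidently dropped the bracket. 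You should flag this explicitly rather than assert the two expressions coincide; once the coefficient is corrected, your computation and its reverse go through and the rest of the argument is sound.
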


Theorem~\ref{T:quantumidempotent} is proven just as in the nonquantum case and as in the proof of \cite[Theorem 3.4]{DG}.

\section{The $q$-Schur Superalgebra as an Endomorphism Superalgebra}

\subsection{Quantum Schur-Weyl duality}\label{SS:quantumschurweyl}

There is a natural signed action of the Iwahori-Hecke algebra associated to the symmetric group on $d$ letters, $\Hq = \Hq (\Sigma_{d})$, on $\V^{\otimes d}$.   In \cite{M} Mitsuhashi defines the $q$-Schur superalgebra as the superalgebra
\[
\widetilde{\Sq}:=\widetilde{\Sq}(m|n,d) = \End_{\Hq}(\Vq^{\otimes d}).
\]    The main result of \cite{M} is to establish a Schur-Weyl duality between this endomorphism algebra and the Iwahori-Hecke algebra.  However, it is not immediately obvious the $q$-Schur superalgebra defined in this paper as a quotient of $\Uq$ coincides with the one used there.    We now reconcile this difference.

Recall that we have a fixed homogeneous basis $v_{1}, \dotsc , v_{m+n}$ for $\Vq$ and this defines a homogeneous basis $\{v_{i_{1}}\otimes \dotsb \otimes v_{i_{d}} \mid  1 \leq i_{1}, \dotsc , i_{d} \leq m+n \}$ for $\Vq^{\otimes d}$.  Define a map $\sigma_{d}: \Vq^{\otimes d}\to \Vq^{\otimes d}$ by
\[
\sigma_{d} (v_{i_{1}} \otimes \dotsb \otimes v_{i_{d}}) = (-1)^{\bar{v}_{i_{1}}+\dotsb +\bar{v}_{i_{d}}} v_{i_{1}}\otimes \dotsb \otimes v_{i_{d}}.
\]   It is easily seen that $\sigma_{d}$ commutes with the action of $\Hq$ on $\Vq^{\otimes d}$ defined in \cite{M}.

Let $\Uq^{\sigma}$ denote the quantum group associated to $\mathfrak{gl}(m|n)$ in \cite{BKK,M}.  This algebra is generated by elements $e_{1}, \dotsc ,e_{m+n-1}$, $f_{1}, \dotsc , f_{m+n-1}$, and $q^{h}$ (where $h$ ranges over the elements of the dual weight lattice), along with an element denoted by $\sigma$.  For each $d\geq 1$, let
\[
\tilde{\rho}_{d}: \Uq^{\sigma} \to \End_{\Q (q)}\left(\Vq^{\otimes d} \right)
\] denote the homomorphism given in \cite[Equation (3.2)]{M}.  Mitsuhashi proves in \cite[Theorem 4.4]{M} that $\widetilde{\Sq}=\tilde{\rho}_{d}\left( \Uq^{\sigma}\right).$  For short we write $\Sq$ for the $q$-Schur superalgebra defined in Section~\ref{SS:qschur} as a quotient of $\Uq$.  We claim that $\widetilde{\Sq}= \Sq$.  When $d =1$, it is straightforward to see that the action of the generators $e_{a}$, $f_{a}$, $q^{h}$, coincide with the action of our $E_{a}$, $F_{a}$, and $K_{a}^{\pm 1}$.  More generally, this remains true for $d\geq 1$ once we take into account the fact that the difference in the coproducts is exactly explained by the fact that we use the sign convention whereas Mitsuhashi does not but instead introduces the element $\sigma$ (which acts on $\Vq^{\otimes d}$ by $\sigma_{d}$).

Thus $\Sq \subseteq \widetilde{\Sq}$.  It only remains to account for the extra generator $\sigma$ in $\Uq^{\sigma}$.   That is, since $\sigma$ acts on $\Vq^{\otimes d}$ by the map $\sigma_{d}$, we need to show that $\sigma_{d}$ lies in $\Sq$.  The next lemma shows that it lies in the image of $\rho_{d}$ and, hence, in $\Sq$.

\begin{lem}\label{L:Sigma}  For each $d \geq 1$, there exists $x_{d} \in \Uq$ so that $\rho_{d}(x_{d}) = \sigma_{d}$.
\end{lem}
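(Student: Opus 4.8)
The plan is to realize $\sigma_d$ as a polynomial in (the image under $\rho_d$ of) a single grouplike element of $\Uq$. The key observation is that $\sigma_d$ is diagonal in the standard basis of $\Vq^{\otimes d}$: it acts on $v_{i_1} \otimes \dotsb \otimes v_{i_d}$ by $(-1)^{N_1}$, where $N_1 = N_1(i_1, \dotsc , i_d)$ is the number of odd indices among $i_1, \dotsc , i_d$. In other words, $\sigma_d$ is completely controlled by the single statistic $N_1 \in \{0,1,\dotsc ,d\}$, so it suffices to produce an element of $\Uq$ whose image is diagonal with eigenvalues determined by $N_1$ in a way that separates the different values of $N_1$.

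First I would single out the grouplike element $K := K_1 K_2 \cdots K_{m+n} \in \Uq$. Using $\rho(K_a)v_b = q_a^{\delta_{a,b}}v_b$ from \eqref{E:Vqdef}, one computes $\rho(K)v_b = q_b v_b = q^{(-1)^{\overline{b}}}v_b$. Since each $K_a$ is grouplike, $\Delta(K_a) = K_a \otimes K_a$, the element $K$ is grouplike as well, and hence acts on $v_{i_1}\otimes \dotsb \otimes v_{i_d}$ through the iterated comultiplication as the scalar $\prod_j q^{(-1)^{\overline{i_j}}} = q^{d - 2N_1}$. Thus $\rho_d(K)$ is diagonal in the standard basis, sharing a system of eigenvectors with $\sigma_d$, and its eigenvalue $q^{d-2N_1}$ again depends only on $N_1$.

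The next step exploits that we work over $\Q(q)$ with $q$ an indeterminate: the scalars $q^{d}, q^{d-2}, \dotsc , q^{-d}$ attached to $N_1 = 0,1,\dotsc ,d$ are pairwise distinct. By Lagrange interpolation there is a polynomial $p(t) \in \Q(q)[t]$ with $p(q^{d-2k}) = (-1)^k$ for every $k = 0,1,\dotsc ,d$ (it is enough to match the values of $N_1$ that actually occur, but interpolating over all $k$ is harmless). Set $x_d := p(K)$, which lies in $\Uq$ because $K \in \Uq$. Since $\rho_d(K)$ and $\sigma_d$ are simultaneously diagonalized and the eigenvalue of $\sigma_d$ on the $N_1$-eigenspace of $\rho_d(K)$ equals $p$ evaluated at the corresponding eigenvalue $q^{d-2N_1}$ of $\rho_d(K)$, we conclude $\rho_d(x_d) = p(\rho_d(K)) = \sigma_d$.

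The only point requiring genuine care is the dual observation that both $\sigma_d$ and $\rho_d(K)$ depend on a basis tensor only through $N_1$, together with the distinctness of the eigenvalues $q^{d-2k}$; the latter is exactly where the transcendence of $q$ enters and is what makes the interpolation possible. Everything else is a routine computation of the action of $K$ via the comultiplication and the sign convention of Section~\ref{SS:schursuperalgebra}. The degenerate cases are subsumed by this argument: when $n=0$ one has $\sigma_d = \mathrm{id}$, and when $m=0$ one has $\sigma_d = (-1)^d\,\mathrm{id}$, both of which the interpolation recovers directly.
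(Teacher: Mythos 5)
Your proof is correct, but it takes a genuinely different route from the paper's. The paper works one index at a time: for each $a$ it builds Lagrange-type elements $\omega_{s,a}$, polynomials in $K_a$ with nodes at the eigenvalues $1, q_a, \dotsc, q_a^d$ (the same eigenvalues underlying relation $(Q7)$), which project onto the subspace where $v_a$ occurs exactly $s$ times and scale it by $(-1)^{s\cdot\overline{a}}$; summing over $s$ gives an operator $\Omega_a$ recording the parity contribution of the index $a$, and the product $\Omega=\prod_a \Omega_a$ realizes $\sigma_d$. You instead collapse everything into the single statistic $N_1$ (the number of odd tensor factors), observe that the grouplike element $K=K_1\cdots K_{m+n}$ has eigenvalue $q^{d-2N_1}$ on the corresponding basis tensors, and perform one interpolation in $\Q(q)[t]$ with the $d+1$ distinct nodes $q^{d-2k}$. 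Both arguments rest on the same two pillars --- simultaneous diagonalizability in the standard basis and distinctness of eigenvalues because $q$ is an indeterminate --- and your verification of the grouplike action (no signs arise since $K$ is even) is sound. Your version is more economical: a single polynomial in a single element rather than $(m+n)(d+1)$ interpolation terms. The paper's version buys a finer family of operators (it separates the occurrence count $r_a$ of each index $a$, not just the aggregate parity count), and it reuses verbatim the eigenvalue computation already done for $K_a$ in Lemma~\ref{L:Q6Q7relations}, but nothing in the lemma requires that extra information. One small remark: your $K$ is not the element constrained by $(Q6)$ (that one involves $K_{m+1}^{-1},\dotsc,K_{m+n}^{-1}$), though the two are compatible --- in $\Sq$ one has $K=q^{d}\left(K_{m+1}\cdots K_{m+n}\right)^{2}$, which acts by $q^{d}\cdot q^{-2N_1}$ as your computation predicts.
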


\begin{proof} It suffices to construct an element of $\Uq$ whose action on our basis for $\Vq^{\otimes d}$ coincides with the action of $\sigma_{d}$.  We build this element up in several steps.  First,  for $0\leq s \leq d$ and $1\leq a \leq m+n$ we use the notation given in \eqref{E:qadef} and \eqref{E:parity} to define $\omega_{s,a} \in \Uq$ by
\[
\omega_{s,a}=\frac{(K_a-1)(K_a-q_a)\cdots(K_a-q_a^{s-1})(K_a-q_a^s+(-1)^{s \cdot \overline{a}})(K_a-q_a^{s+1})\cdots(K_a-q_a^d)}{(q_a^s-1)(q_a^s-q_a)\cdots(q_a^s-q_a^{s-1})(q_a^s-q_a^{s+1})\cdots(q_a^s-q_a^d)}.
\]   Given $ 1 \leq a  \leq  m+n$ we define a function,
\[
r_{a}: \left\{ v_{i_{1}} \otimes \dotsb \otimes v_{i_{d}} \mid  1 \leq i_{1}, \dotsc , i_{d} \leq m+n  \right\} \to \left\{0,1,\dotsc ,d \right\},
\]
which counts the occurrences of $v_{a}$ in $v_{i_{1}}\otimes \dotsb \otimes v_{i_{d}}$.  That is, it is defined by
\[
r_{a} = r_{a}(v_{i_{1}}\otimes \dotsb \otimes v_{i_{d}}) = | \left\{t = 1, \dotsc , d \mid i_{t}=a \right\}|.
\]  Then a direct calculation (cf.\ the calculation used to prove relation $(Q7)$ in Lemma~\ref{L:Q6Q7relations}) shows that
\begin{align*}
\omega_{s,a}(v_{i_1}\otimes v_{i_2}\otimes \cdots \otimes v_{i_d})= \begin{cases}
                       (-1)^{r_{a} \cdot\overline{a}}(v_{i_1}\otimes v_{i_2}\otimes \cdots \otimes v_{i_d}), & \text{ if $s=r_a$};\\
                       0, & \text{ if $s\neq r_a$}.
\end{cases}
\end{align*}

Now, for $1 \leq a \leq m+n$ define $\Omega_{a}\in \Uq$ by
\[
\Omega_a=\sum_{s=0}^{d}\omega_{s,a}.
\]  It then follows that for any basis vector $v_{i_{1}}\otimes \dotsb \otimes v_{i_{d}}$ we have
\[
\Omega_a(v_{i_1}\otimes \cdots \otimes v_{i_d})=(-1)^{r_{a}\cdot \overline{a}}(v_{i_1}\otimes \cdots \otimes v_{i_d}).
\]

Finally we define $\Omega \in \Uq$ to be the element
\begin{equation*}
\Omega=\prod_{a=1}^{m+n}\Omega_a.
\end{equation*}
It follows that we have
\begin{align*}
\Omega(v_{i_1}\otimes \cdots \otimes v_{i_d}) & =\left( \prod_{a=1}^{m+n}(-1)^{r_{a}\cdot\overline{a}}\right)(v_{i_1}\otimes \cdots \otimes v_{i_d}) \\
           & =(-1)^{r_{1}\cdot\overline{1}+\dotsb +r_{m+n}\cdot \overline{m+n}}(v_{i_1}\otimes \cdots \otimes v_{i_d}) \\
         & =(-1)^{\overline{v}_{i_{1}}+\dotsb +\overline{v}_{m+n}}(v_{i_1}\otimes \cdots \otimes v_{i_d})
\end{align*}
for every basis element $v_{i_{1}} \otimes \dotsb \otimes v_{i_{d}}$.  That is, as desired, $\Omega \in \Uq$ acts as $\sigma_{d}$ on $\Vq^{\otimes d}$.
\end{proof}

\end{document}